\documentclass[11pt]{article}
\usepackage{amsfonts, amssymb, graphicx, amsmath}
\usepackage{fourier}
\usepackage{fullpage}
\usepackage{natbib}
\usepackage{float}
\usepackage[usenames]{color}
\usepackage[pdftex]{hyperref}
\hypersetup{
    colorlinks=true,       % false: boxed links; true: colored links
    linkcolor=blue,        % color of internal links
    citecolor=red,        % color of links to bibliography
    filecolor=blue,        % color of file links
    urlcolor=blue          % color of external links
}
\RequirePackage{natbib}
\usepackage{amsmath,amssymb,latexsym,amsthm}
\usepackage{graphicx}
\usepackage[ruled,vlined]{algorithm2e}

\newtheorem{theorem}{Theorem}
\newtheorem{lemma}{Lemma}

\newtheorem{remark}{Remark}[theorem]

\let\inf\undef
\DeclareMathOperator*{\inf}{\vphantom{p}inf}
\let\sup\undef
\DeclareMathOperator*{\sup}{\vphantom{p}sup}

%%%%%%%%%% NOTATION %%%%%%%%%%

% Shortcuts
\newcommand{\mbb}[1]{\mathbb{#1}}

\newcommand{\mc}[1]{\mathcal{#1}}
\newcommand{\mrm}[1]{\mathrm{#1}}

% General math

\newcommand{\KL}{d_{\sf KL}}

\newcommand{\sign}{\mrm{sign}}
\newcommand{\argmin}[1]{\underset{#1}{\mrm{argmin}} \ }

 % brackets
 % curly braces
  % no brackets or braces
%\newcommand{\En}{\Ex}  % no brackets or braces
  % subscript no braces
  % no brackets or braces
 % subscript + brackets
 % subscript + brackets

%\newcommand{\ind}[1]{\ 1\hspace{-2.3mm}{1}{\left\{#1\right\}}}

% Bold symbols

% Calligraphic symbols

 % Radius of \cH
 % metric entropy

% Notations

%\newcommand\ldim{\mathrm{Sdim}}

 % Strong convexity constant

% Calligraphic notations

% For the examples section

%\newcommand{\multiminimax}[1]{\ensuremath{\left\llangle #1\right\rrangle}}

%%%%%%%%%%%%%%%%%%%%%%%%%%%%%%

%%%%%%%%%%%%%%%%%%%%%%%%%%%%%%%%%%%%%%%%%%%%%%%%%%%%%%%

\begin{document}

\title{Computational and Statistical Boundaries for Submatrix Localization in a Large Noisy Matrix}
\author{T. Tony Cai\footnote{The research of Tony Cai was supported in part by NSF Grants DMS-1208982 and DMS-1403708,  and NIH Grant R01 CA127334.}, \;\, Tengyuan Liang\footnote{Tengyuan Liang acknowledges the support of Winkelman Fellowship.} \; and \; Alexander Rakhlin\footnote{Alexander Rakhlin gratefully acknowledges the support of NSF
under grant CAREER DMS-0954737.}\\ \\
Department of Statistics\\ The Wharton School \\ University of Pennsylvania}
\date{}
\maketitle

\begin{abstract}

The interplay between computational efficiency and statistical accuracy in high-dimensional inference has drawn increasing attention in the literature.
In this paper, we study computational and statistical boundaries for submatrix localization. Given one observation of (one or multiple non-overlapping) signal submatrix (of magnitude $\lambda$ and size $k_m \times k_n$) contaminated with a noise matrix (of size $m \times n$), we establish two transition thresholds for the signal to noise $\lambda/\sigma$ ratio in terms of $m$, $n$, $k_m$, and $k_n$. The first threshold, $\sf SNR_c$, corresponds to the computational boundary. Below this threshold, it is shown that no polynomial time algorithm can succeed in identifying the submatrix, under the \textit{hidden clique hypothesis}.  We introduce adaptive linear time spectral algorithms that identify the submatrix with high probability when the signal strength is above the threshold $\sf SNR_c$.   The second threshold, $\sf SNR_s$, captures the statistical boundary, below which no method can succeed with probability going to one in the minimax sense. The exhaustive search method successfully finds the submatrix above this threshold. The results show an interesting phenomenon that $\sf SNR_c$ is always significantly larger than $\sf SNR_s$, which implies an essential gap between statistical optimality and computational efficiency for submatrix localization. 
\end{abstract}

% \noindent{\bf Keywords:\/}
% Computational boundary, detection, hidden clique hypothesis, lower bounds, minimax, signal to noise ratio, statistical boundary, submatrix localization.

%%%%%%%%%%%%%%%%%%%%%%%%%%%%%%%%%%%%%%%%%%%%%%%%%%%%%%%%%
\section{Introduction}
%%%%%%%%%%%%%%%%%%%%%%%%%%%%%%%%%%%%%%%%%%%%%%%%%%%%%%%%%

%\subsection{Computational and Statistical Tradeoffs}

The ``signal + noise" model 
\begin{align}
X = M + Z, 
\end{align}
where $M$ is the signal of interest and $Z$ is noise, is ubiquitous in statistics and is used in a wide range of applications. When $M$ and $Z$ are matrices, many interesting problems arise under a variety of structural assumptions on $M$ and the distribution of $Z$. Examples include sparse principal component analysis (PCA) \citep{vu2012minimax,berthet2013optimal,birnbaum2013minimax,cai2013sparse,cai2015optimal}, non-negative matrix factorization \citep{lee2001algorithms}, non-negative PCA \citep{zass2006nonnegative,montanari2014non}. Under the conventional statistical framework, one is looking for optimal statistical procedures for recovering the signal or detecting its presence. 

As the dimensionality of the data becomes large, the computational concerns associated with statistical procedures 
come to the forefront. In particular, problems with a combinatorial structure or non-convex constraints pose a significant computational challenge because naive methods based on exhaustive search are typically not computationally efficient. Trade-off between computational efficiency and statistical accuracy in  high-dimensional inference has drawn increasing attention in the literature. In particular, \cite{chandrasekaran2012convex} and \cite{wainwright2014structured} considered a general class of linear inverse problems, with different emphasis on convex geometry and decomposition of statistical and computational errors. \cite{chandrasekaran2013computational} studied an approach for trading off computational demands with statistical accuracy via relaxation hierarchies. \cite{berthet2013computational,ma2013computational,zhang2014lower} focused on computational requirements for various statistical problems, such as detection and regression. 

In the present paper, we study the interplay between computational efficiency and statistical accuracy in  submatrix localization based on a noisy observation of a large matrix. The problem considered in this paper is formalized as follows.

\subsection{Problem Formulation}

Consider the matrix $X$ of the form
\begin{align}
	\label{submat.Mat.Frm}
	X = M + Z, ~~ \text{where} ~~ M = \lambda \cdot 1_{R_m} 1_{C_n}^T
\end{align}
and $1_{R_m}\in \mathbb{R}^m$ with $1$ on the index set $R_m$ and zero otherwise. Here, the entries $Z_{ij}$ of the noise matrix are i.i.d. zero-mean sub-Gaussian random variables with parameter $\sigma$ (defined formally in Equation \eqref{SubG.Def}).
Given the parameters $m,n,k_m,k_n,\lambda/\sigma$, the set of all distributions described above---for all possible choices of $R_m$ and $C_n$---forms the submatrix model $\mathcal{M}(m,n,k_m,k_n,\lambda/\sigma)$. 

This model can be further extended to the case of multiple submatrices as
\begin{align}
	\label{multmat.Mat.Frm}
	M = \sum_{s= 1}^r \lambda_s \cdot 1_{R_s} 1_{C_s}^T
\end{align}
where $|R_s| =  k_s^{(m)}$ and $|C_s| = k_s^{(n)}$ denote the support set of the $s$-th submatrix. 
%Under the assumption that for any $1\leq s \neq t \leq r$, $R_s \cap R_t = \emptyset$ and $C_s \cap C_t = \emptyset$, $\frac{1}{\sqrt{k_s^{(m)}}} 1_{R_s}, 1\leq s \leq r$ and $\frac{1}{\sqrt{k_s^{(n)}}} 1_{C_s}, 1\leq s \leq r$ act as left and right singular vectors.
For simplicity, we first focus on the single submatrix and then extend the analysis to the model \eqref{multmat.Mat.Frm} in Section \ref{sec:ext}.

There are two fundamental questions associated with the submatrix model \eqref{submat.Mat.Frm}. One is the \textit{detection} problem: given one observation of the $X$ matrix, decide whether it is generated from a distribution in the submatrix model or from the pure noise model. Precisely, the detection problem considers testing of the hypotheses 
$$
H_0 : M = {\bf 0} ~~~ \text{v.s.} ~~~ H_\alpha: M \in \mathcal{M}(m,n,k_m,k_n,\lambda/\sigma).
$$
The other is the \textit{localization} problem, where the goal is to exactly recover the signal index sets $R_m$ and $C_n$ (the support of the mean matrix $M$). It is clear that the localization problem is at least as hard (both computationally and statistically) as the detection problem. As we show in this paper, the localization problem requires larger signal to noise ratio $\lambda/\sigma$, as well as a more detailed exploitation of the submatrix structure. %The focus of the current paper is dedicated to the \textit{localization} problem.

If the signal to noise ratio is sufficiently large, it is computationally easy to localize the submatrix. On the other hand, if this ratio is small, the localization problem is statistically impossible. To quantify this phenomenon, we identify two distinct thresholds ($\sf SNR_s$ and $\sf SNR_c$) for $\lambda/\sigma$ in terms of parameters $m, n, k_m, k_n$. The first threshold, $\sf SNR_s$, captures the statistical boundary, below which no method (possibly exponential time) can succeed with probability going to one in the minimax sense. The exhaustive search method successfully finds the submatrix above this threshold. % Beyond the threshold, there will be a statistical algorithm that is guaranteed to succeed with overwhelming probability.
The second threshold, $\sf SNR_c$, corresponds to the computational boundary, above which an adaptive (with respect to the parameters) linear time spectral algorithm finds the signal. Below this threshold, no polynomial time algorithm can succeed, under the \textit{hidden clique hypothesis}, described later.

\subsection{Prior Work}

There is a growing body of work in statistical literature on submatrix problems. 
%Let us discuss the papers most relevant to the problem studied here. 
\cite{shabalin2009finding} provided a fast iterative maximization algorithm to solve the submatrix localization problem. However, as with many EM type algorithms, the theoretical result is very sensitive to initialization. \cite{arias2011detection} studied the detection problem for a cluster inside a large matrix. \cite{butucea2013detection,butucea2013sharp} formulated the submatrix detection and localization problems under Gaussian noise and determined sharp statistical transition boundaries. For the detection problem, \cite{ma2013computational} provided a computational lower bound result under the assumption that hidden clique detection is computationally difficult. 

\cite{balakrishnan2011statistical, kolar2011minimax} focused on statistical and computational trade-offs for the submatrix localization problem. They provided a computationally feasible entry-wise thresholding algorithm, a row/column averaging algorithm, and a convex relaxation for sparse SVD to investigate the minimum signal to noise ratio that is required in order to localize the submatrix. Under the sparse regime $k_m \precsim m^{1/2}$ and $k_n \precsim n^{1/2}$, the entry-wise thresholding turns out to be the ``near optimal'' polynomial-time algorithm (which we will show a de-noised spectral algorithm that perform slightly better in Section~\ref{subsec:sparse}). However, for the dense regime when $k_m \succsim m^{1/2}$ and $k_n \succsim n^{1/2}$, the algorithms provided in \cite{kolar2011minimax} are not optimal in the sense that there are other polynomial-time algorithm that can succeed in finding the submatrix with smaller SNR. Concurrently with our work, \cite{chen2014statistical} provided a convex relaxation algorithm that improves the SNR boundary of \cite{kolar2011minimax} in the dense regime. On the downside, the implementation of the method requires a full SVD on each iteration, and therefore does not scale well with the dimensionality of the problem. Furthermore, there is no computational lower bound in the literature to guarantee the optimality of the SNR boundary achieved in \cite{chen2014statistical}. 

A problem similar to submatrix localization is that of clique finding. \cite{deshpande2013finding} presented an iterative approximate message passing algorithm to solve the latter problem with sharp boundaries on SNR. However, in contrast to submatrix localization, where the signal submatrix can be located anywhere within the matrix, the clique finding problem requires the signal to be centered on the diagonal.

We would like to emphasize the difference between detection and localization problems. When $M$ is a vector, \cite{donoho2004higher} proposed the ``higher criticism'' approach to solve the detection problem under the Gaussian sequence model. Combining the results in \citep{donoho2004higher,ma2013computational}, in the computationally efficient region, there is no loss in treating $M$ in model \eqref{submat.Mat.Frm} as a vector and applying the higher criticism method to the vectorized matrix for the problem of submatrix detection. In fact, the procedure achieves sharper constants in the Gaussian setting. However, in contrast to the detection problem, we will show that for localization, it is crucial to utilize the matrix structure, even in the computationally efficient region.

\subsection{Notation}
Let $[m]$ denote the index set $\{1,2,\ldots,m\}$. For a matrix $X \in \mathbb{R}^{m\times n}$, $X_{i \cdot} \in \mathbb{R}^n$ denotes its $i$-th row and $X_{\cdot j} \in \mathbb{R}^m$ denotes its $j$-th column. For any $I \subseteq [m], J \subseteq [n]$, $X_{IJ}$ denotes the submatrix corresponding to the index set $I \times J$. 
%We use $\| \cdot \|_{\ell_p}$ to denote the $\ell_p$ norm of a vector or the induced $\ell_p$ norm of a matrix. More precisely, 
For a vector $v \in \mathbb{R}^n$, $\| v \|_{\ell_p} = (\sum_{i \in [n]} |v_i|^p)^{1/p}$ and for a matrix $M \in \mathbb{R}^{m \times n}$, $\| M \|_{\ell_p} = \sup_{v \neq \bf{0}} \| M v \|_{\ell_p}/\| v \|_{\ell_p}$. When $p=2$, the latter is the usual spectral norm, abbreviated as $\| M \|_{2}$. The nuclear norm a matrix $M$ is defined as a convex surrogate for the rank, with the notation to be $\| M \|_*$. The Frobenius norm of a matrix $M$ is defined as $\| M \|_{F} = \sqrt{\sum_{i,j}M_{ij}^2}$. The inner product associated with the Frobenius norm is defined as $\langle A, B \rangle = {\sf tr}(A^T B)$. 

Denote the asymptotic notation $a(n) = \Theta(b(n))$ if there exist two universal constants $c_l,c_u$ such that $c_l \leq \varliminf\limits_{n\rightarrow \infty} a(n)/b(n) \leq \varlimsup\limits_{n\rightarrow \infty} a(n)/b(n) \leq c_u$. $\Theta^*$ is asymptotic equivalence hiding logarithmic factors in the following sense: $a(n) = \Theta^*(b(n))$ iff there exists $c>0$ such that $a(n) = \Theta(b(n) \log^c n)$. Additionally, we use the notation $a(n) \asymp b(n)$ as equivalent to $a(n) = \Theta(b(n))$, $a(n) \succsim b(n)$ iff $\lim_{ n\rightarrow \infty} a(n)/b(n) = \infty$ and $a(n) \precsim b(n)$ iff $\lim_{n \rightarrow \infty} a(n)/b(n) = 0$. 

We define the zero-mean sub-Gaussian random variable $\bf z$ with sub-Gaussian parameter $\sigma$ in terms of its Laplacian. If there exists a universal constant $c>0$,
\begin{align}
	\label{SubG.Def}
	\mbb{E} e^{\lambda {\bf z}} \leq \exp(\sigma^2 \lambda^2/2c), ~~ \text{for all}~~ \lambda > 0
\end{align}
then we have
$$
\mbb{P}(|{\bf z}| > \sigma t) \leq 2 \cdot \exp(-c \cdot t^2/2).
$$
We call a random vector $Z \in \mathbb{R}^n$ isotropic with parameter $\sigma$ if
$$
\mathbb{E} (v^T Z)^2 = \sigma^2 \| v \|_{\ell_2}^2, ~~\text{for all}~~ v \in \mathbb{R}^n.
$$
Clearly, Gaussian and Bernoulli measures, and more general product measures of zero-mean sub-Gaussian random variables satisfy this isotropic definition up to a constant scalar factor.

\subsection{Our Contributions}

\label{sec:contribution}
To state our main results, let us first define a hierarchy of algorithms in terms of their worst-case running time on instances of the submatrix localization problem:
	\begin{align*}
		{\sf LinAlg} \subset {\sf PolyAlg} \subset {\sf ExpoAlg} \subset {\sf AllAlg}.
	\end{align*}
The set ${\sf LinAlg}$ contains algorithms $\mathcal{A}$ that produce an answer (in our case, the localization subset $\hat{R}^{\mathcal{A}}_m, \hat{C}^{\mathcal{A}}_n$) in time linear in $m\times n$ (the minimal computation required to read the matrix). The classes ${\sf PolyAlg}$ and ${\sf ExpoAlg}$ of algorithms, respectively, terminate in polynomial and exponential time, while ${\sf AllAlg}$ has no restriction.

Combining Theorem \ref{thm:comp-bdr} and \ref{thm:comp-bdr-sparse} in Section \ref{sec:comp-bdr} and Theorem \ref{thm:stat-bdr} in Section \ref{sec:stat-bdr}, the statistical and computational boundaries for submatrix localization can be summarized as follows.
	
\begin{theorem}[Computational and Statistical Boundaries]
	\label{thm:main}
	Consider the submatrix localization problem under the model \eqref{submat.Mat.Frm}. 
	The computational boundary ${\sf SNR_c}$ for the dense case when $\min\{k_m,k_n\} \succsim \max\{m^{1/2},n^{1/2}\}$ is
		\begin{align}
			{\sf SNR_c} \asymp  \sqrt{\frac{m \vee n}{k_m k_n}} + \sqrt{\frac{\log n}{k_m} \vee \frac{\log m}{k_n}},
		\end{align}
		in the sense that
		\begin{align}
			\label{eq:comp-upp}
			&\varlimsup_{m,n,k_m,k_n\rightarrow \infty} \inf_{\mathcal{A} \in {\sf LinAlg}}~ \sup_{M \in \mathcal{M}} \mathbb{P}\left( \hat{R}^{\mathcal{A}}_m \neq R_m  \text{ or }   \hat{C}^{\mathcal{A}}_n \neq C_n \right) = 0, &  \text {if } \; \frac{\lambda}{\sigma} \succsim {\sf SNR_c} \\
			\label{eq:comp-low}
			&\varliminf_{m,n,k_m,k_n\rightarrow \infty} \inf_{\mathcal{A} \in {\sf PolyAlg}}~ \sup_{M \in \mathcal{M}} \mathbb{P}\left( \hat{R}^{\mathcal{A}}_m \neq R_m  \text{ or }   \hat{C}^{\mathcal{A}}_n \neq C_n \right) > 0, &  \text {if } \;  \frac{\lambda}{\sigma} \precsim {\sf SNR_c}
		\end{align}
		where \eqref{eq:comp-low} holds under the Hidden Clique hypothesis ~$\sf HC_{l}$ (see Section~\ref{subsec:complwd}). 
		For the sparse case when $\max\{k_m, k_n\} \precsim \min\{ m^{1/2}, n^{1/2}\}$, the computational boundary is ${\sf SNR_c}  = \Theta^*(1)$, more precisely
		$$
		1\precsim {\sf SNR_c} \precsim \sqrt{\log \frac{m\vee n}{k_m k_n}}.
		$$
		
	The statistical boundary ~${\sf SNR_s}$ is
	\begin{align}
		{\sf SNR_s} \asymp  \sqrt{\frac{\log n}{k_m} \vee \frac{\log m}{k_n}},
	\end{align}	
	in the sense that
	\begin{align}
		\label{eq:stat-upp}
		&\varlimsup_{m,n,k_m,k_n\rightarrow \infty} \inf_{\mathcal{A} \in {\sf ExpoAlg}} ~\sup_{M \in \mathcal{M}} \mathbb{P}\left( \hat{R}^{\mathcal{A}}_m \neq R_m  \text{ or }  \hat{C}^{\mathcal{A}}_n \neq C_n \right) = 0, &  \text {if } \; \frac{\lambda}{\sigma} \succsim {\sf SNR_s} \\
		\label{eq:stat-low}
		&\varliminf_{m,n,k_m,k_n\rightarrow \infty} \inf_{\mathcal{A} \in {\sf AllAlg}} ~\sup_{M \in \mathcal{M}} \mathbb{P}\left( \hat{R}^{\mathcal{A}}_m \neq R_m  \text{ or }   \hat{C}^{\mathcal{A}}_n \neq C_n \right) > 0, &  \text {if } \; \frac{\lambda}{\sigma} \precsim {\sf SNR_s}
	\end{align}
	under the minimal assumption $\max\{ k_m,k_n \} \precsim \min\{ m,n \}$. 
\end{theorem}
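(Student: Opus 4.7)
The plan is to decompose the theorem into its four claims—statistical upper bound \eqref{eq:stat-upp}, statistical lower bound \eqref{eq:stat-low}, computational upper bound \eqref{eq:comp-upp}, and computational lower bound \eqref{eq:comp-low}—and treat each with a different tool. Throughout, I will exploit the rank-one structure of $M = \lambda\,1_{R_m}1_{C_n}^{\tr}$, whose signal strength in the spectral norm is $\|M\|_2 = \lambda\sqrt{k_m k_n}$ and in the row/column-sum metric is $\lambda\cdot\min(k_m,k_n)$.

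For the statistical upper bound, I would analyze the exhaustive search / MLE estimator that outputs the index set $(\hat R_m,\hat C_n)\in\binom{[m]}{k_m}\times\binom{[n]}{k_n}$ maximizing the submatrix sum $\sum_{i\in I, j\in J} X_{ij}$. A standard chi-square-type argument: for any wrong pair $(I,J)$ that differs from $(R_m,C_n)$ in $a$ rows and $b$ columns, the gap in expected sum is at least $\lambda(k_m b + k_n a - ab)$, while the noise fluctuation on the symmetric difference is sub-Gaussian with variance proxy $\sigma^2(k_m b + k_n a)$. A union bound over the $\binom{m}{k_m}\binom{n}{k_n}$ configurations, stratified by $(a,b)$ and using $\log\binom{m}{a}\lesssim a\log(m/a)$, shows that $\lambda/\sigma\succsim\sqrt{\log n/k_m\vee\log m/k_n}$ suffices. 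The matching statistical lower bound I would prove by Fano's/Le Cam's method: construct a family of hypotheses corresponding to swapping a single row (or single column) of $R_m$ with a row outside it; there are $\Theta(m)$ such alternatives whose KL distance to the truth is $O(\lambda^2 k_n/\sigma^2)$, and Fano's inequality then forces $\lambda^2 k_n/\sigma^2 \gtrsim \log m$ (and symmetrically $\lambda^2 k_m/\sigma^2 \gtrsim \log n$).

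For the computational upper bound in the dense regime, the key observation is that the signal matrix $M$ is rank one with $\|M\|_2 = \lambda\sqrt{k_m k_n}$ and the noise $Z$ has $\|Z\|_2 = O(\sigma(\sqrt{m}+\sqrt{n}))$ with high probability. When $\lambda/\sigma \succsim \sqrt{(m\vee n)/(k_m k_n)}$, Wedin's perturbation theorem shows that the top left/right singular vectors of $X$ are close (in $\ell_2$) to the indicator directions $1_{R_m}/\sqrt{k_m}$ and $1_{C_n}/\sqrt{k_n}$. A spectral algorithm that selects the $k_m$ largest coordinates of the top left singular vector (and $k_n$ of the right) will then typically miss only a small fraction of correct indices; a second \emph{cleanup} pass that rescores each row by its average on the tentative columns (and vice versa) pushes the error to zero provided $\lambda/\sigma$ also dominates the second term $\sqrt{\log n/k_m\vee\log m/k_n}$, which is precisely the statistical cost for locating a single row. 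The two terms in $\sf SNR_c$ thus correspond respectively to the spectral separation and the row/column refinement stages. For the sparse case, entry-wise thresholding (plus a cleanup) gives the $\Theta^*(1)$ upper bound.

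The main obstacle is the computational lower bound \eqref{eq:comp-low}, and it is the step I would spend most effort on. The strategy is a polynomial-time reduction from the Hidden Clique problem $\sf HC_l$: assuming detecting a planted clique of size $l=o(\sqrt{N})$ in a $G(N,1/2)$ graph is hard, I would convert an input graph $G$ into a noisy matrix $X$ of the form \eqref{submat.Mat.Frm} whose parameters $(m,n,k_m,k_n,\lambda/\sigma)$ sit exactly below the conjectured computational threshold. The construction has to (i) embed the $\pm 1$ centered adjacency matrix into the sub-Gaussian noise model, (ii) possibly use padding with fresh random bits and random row/column permutations so that the planted submatrix can lie anywhere, and (iii) match the SNR scaling so that a localization oracle below $\sf SNR_c$ yields a clique detector in the hard regime. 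The delicate point is tuning the padding and scaling so that \emph{both} threshold terms in $\sf SNR_c$ are reproduced; intuitively one pads with an $m\times n$ block of noise to encode the spectral term and uses separate randomization to encode the logarithmic term. Success of the reduction then transfers the clique hardness assumption to the localization problem, yielding \eqref{eq:comp-low}.
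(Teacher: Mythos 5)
Most of your plan lines up with what is actually needed, and three of the four pieces are essentially sound. Your scan-statistic analysis of the exhaustive search, stratified by the overlap $(a,b)$, is the same argument as the paper's guarantee for the combinatorial search algorithm (Lemma \ref{lma:stat-alg}). Your statistical lower bound via a local packing of single row/column swaps is a legitimate alternative to the paper's Fano argument, which instead puts a uniform prior on all $\binom{m}{k_m}\binom{n}{k_n}$ positions and bounds the KL divergence to the mixture (Lemma \ref{lma:info-low}); both give the rate $\sqrt{\log n/k_m \vee \log m/k_n}$. Your dense-regime upper bound (Wedin plus a row/column re-scoring cleanup) is close to the paper's spectral projection analysis; note only that the cleanup scores depend on the data through the tentative sets, so you need the same decoupling device the paper uses (Tsybakov's sample cloning or subsampling) before the $\sqrt{\log m/k_n}$ fluctuation bound is legitimate. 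For the sparse case, plain entry-wise thresholding only yields $\lambda/\sigma \precsim \sqrt{\log (m\vee n)}$, which is $\Theta^*(1)$ but does not give the stated sharper bracket $\precsim \sqrt{\log\frac{m\vee n}{k_m k_n}}$; for that one needs the soft-threshold-then-spectral algorithm of Lemma \ref{lma:thres-sp-alg}.

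The genuine gap is in the computational lower bound \eqref{eq:comp-low}, which is also where the paper's main technical novelty lies. Your proposed reduction (center the adjacency matrix, pad with fresh noise, randomly permute, tune the scaling) cannot reach the target parameter regime: padding and permutation leave the per-entry signal-to-noise ratio at $\asymp 1$ and can only \emph{decrease} the relative block size $k/\sqrt{n}$, so every instance you can produce this way lies in the sparse regime at constant SNR. The dense-regime claim requires hard instances with $k = n^{\alpha}$, $\alpha>1/2$, and $\lambda/\sigma = n^{-\beta} \to 0$ with $\beta$ just above $\alpha - 1/2$, i.e.\ you must simultaneously \emph{dilute} the per-entry signal and \emph{inflate} the planted support beyond $\sqrt{n}$ — no amount of padding or rescaling of a single clique matrix achieves this. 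The paper's proof of Theorem \ref{CLL.Thm} does it by a bootstrap device: it averages $l^2$ resampled copies of the (upper-right, hence independent-entry) adjacency block with $l \asymp n^{\beta}$, which reduces the mean gap to $\asymp 1/l$ while mapping each clique row/column to $\asymp l$ bootstrapped rows/columns, so the support grows to $\asymp \kappa l$; it then has to verify sub-Gaussianity, approximate independence of the bootstrapped entries, and concentration of the candidate-set sizes. Separately, your reduction ends by producing a clique \emph{detector}, which would only contradict the detection hypothesis $\sf HC_{d}$, a strictly stronger assumption than the $\sf HC_{l}$ under which \eqref{eq:comp-low} is claimed. To contradict $\sf HC_{l}$ you must convert the submatrix localization output back into \emph{exact} recovery of the hidden clique; the paper does this with a cleanup step (Lemma \ref{lma:neig}): the localized sets give candidate sets of size $k$ containing the clique, and since $\kappa \succsim \sqrt{k\log N}$ in the relevant regime, degree counting inside the candidate set recovers the clique exactly. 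Without the dilution/inflation mechanism and this back-mapping, the reduction as you describe it does not establish the stated lower bound.
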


If we parametrize the submatrix model as $m = n, k_m \asymp k_n \asymp k = \Theta^*(n^{\alpha}), \lambda/\sigma = \Theta^*(n^{-\beta})$, for some  $0<\alpha,\beta<1$, we can summarize the results of Theorem~\ref{thm:main} in a phase diagram, as illustrated in Figure \ref{fig:phase}.
    \begin{figure}[pht]
        \centering
        \includegraphics[width=5in]{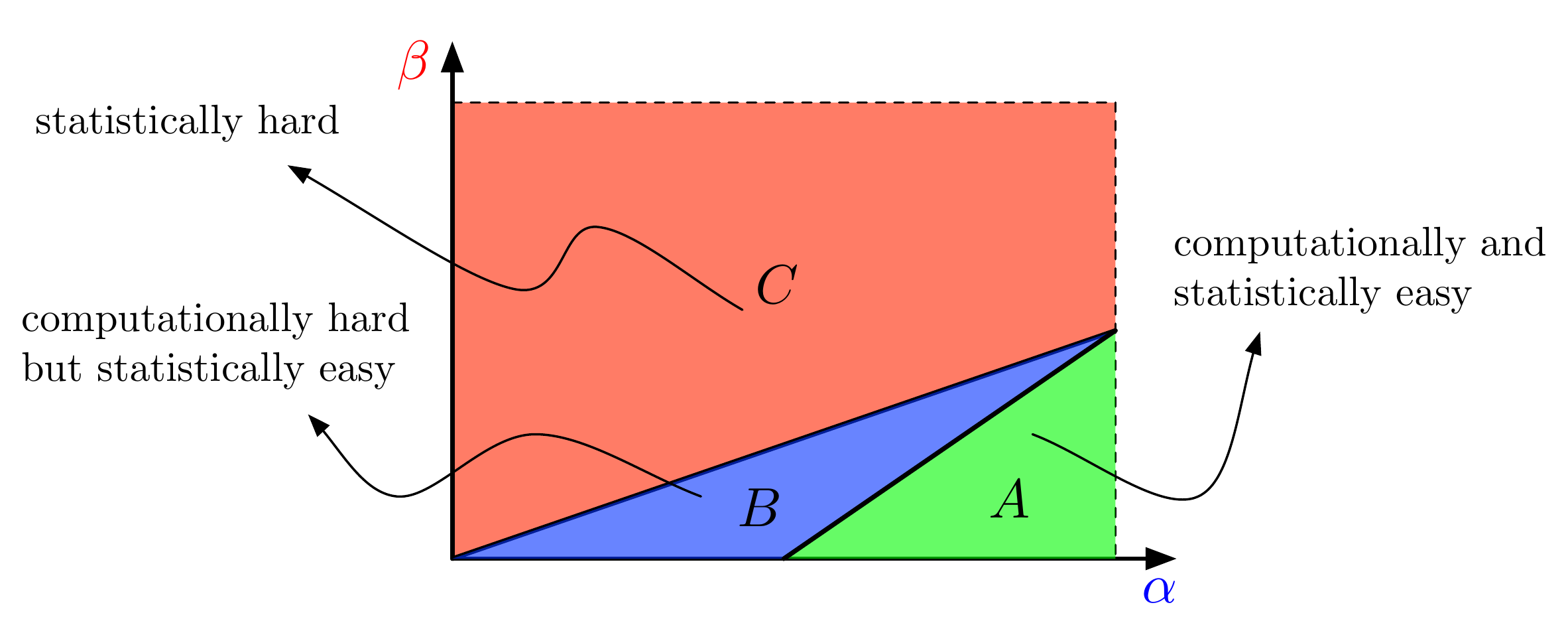}
		\caption{Phase diagram for submatrix localization. Red region (C): statistically impossible, where even without computational budget, the problem is hard. Blue region (B): statistically possible but computationally expensive (under the hidden clique hypothesis), where the problem is hard to all polynomial time algorithm but easy with exponential time algorithm. Green region (A): statistically possible and computationally easy, where a fast polynomial time algorithm will solve the problem.}
		\label{fig:phase}
    \end{figure}
    
To explain the diagram, consider the following cases. First, the statistical boundary is $$ \sqrt{\frac{\log n}{k_m} \vee \frac{\log m}{k_n}},$$ which gives the line separating the red and the blue regions. For the dense regime $\alpha \geq 1/2$, the computational boundary given by Theorem~\ref{thm:main} is $$\sqrt{\frac{m \vee n}{k_m k_n}} + \sqrt{\frac{\log n}{k_m} \vee \frac{\log m}{k_n}},$$ which corresponds to the line separating the blue and the green regions. For the sparse regime $\alpha < 1/2$,  the computational boundary is $\Theta(1) \precsim {\sf SNR_c}  \precsim \Theta(\sqrt{\log \frac{m\vee n}{k_m k_n}})$, which is the horizontal line connecting $(\alpha = 0, \beta = 0)$ to $(\alpha = 1/2, \beta = 0)$.

    As a key part of Theorem~\ref{thm:main}, we provide various linear time spectral algorithms that will succeed in localizing the submatrix with high probability in the regime above the computational threshold. Furthermore, the method is adaptive: it does not require the prior knowledge of the size of the submatrix. This should be contrasted with the method of \cite{chen2014statistical} which requires the prior knowledge of $k_m,k_n$; furthermore, the running time of their SDP-based method is superlinear in $nm$. Under the hidden clique hypothesis, we prove that below the computational threshold there is no polynomial time algorithm that can succeed in localizing the submatrix. This is a new result that has not been established in the literature. 
	We remark that the computational lower bound for \emph{localization} requires a technique different from the lower bound for \emph{detection}; the latter has been resolved in \cite{ma2013computational}.  
	
	 % \grey{DONE. Briefly discuss the extension to multiple submatrices here. This case further distinguishes the paper from the results given in \cite{butucea2013detection, butucea2013sharp} .}
	Beyond localization of one single submatrix, we generalize both the computational and statistical story to a growing number of submatrices in Section~\ref{sec:ext}. As mentioned earlier, the statistical boundary for one single submatrix localization has been investigated by \cite{butucea2013sharp} in the Gaussian case. Our result focuses on the computational intrinsic difficulty of localization for a growing number of submatrices, at the expense of not providing the exact constants for the thresholds. 	
    
%    Let us finish the discussion by pointing out an interesting phenomenon. 
    The phase transition diagram in Figure~\ref{fig:phase} for localization should be contrasted with the corresponding result for detection, as shown in \citep{butucea2013detection, ma2013computational}. For a large enough submatrix size (as quantified by $\alpha>2/3$), the computationally-intractable-but-statistically-possible region collapses for the detection problem, but not for localization. In plain words, detecting the presence of a large submatrix becomes both computationally and statistically easy beyond a certain size, while for localization there is always a gap between statistically possible and computationally feasible regions. This phenomenon also appears to be distinct to that of other problems like estimation of sparse principal components \citep{cai2013sparse}, where computational and statistical easiness coincide with each other over a large region of the parameter spaces.
	%For localization problem, there will always be a price to pay if we want to solve the problem efficiently as there is a gap between computational boundary and statistical boundary.

\subsection{Organization of the Paper}

The paper is organized as follows. Section \ref{sec:comp-bdr} establishes the computational boundary, with the computational lower bounds given in Section \ref{subsec:complwd} and upper bound results in Sections \ref{subsec:alg}-\ref{subsec:sparse}. 
%Specifically, Section \ref{subsec:alg} and \ref{subsec:sparse} introduce the linear time spectral Algorithm \ref{SVD1.Alg} and its variant Algorithm~\ref{SVDThre.Alg} that achieve the upper bound in Theorem \ref{thm:main}.
An extension to the case of multiple submatrices is presented in Section \ref{sec:ext}.  
The upper and lower bounds for statistical boundary for multiple submatrices are discussed in Section \ref{sec:stat-bdr}. A  discussion is given in Section \ref{sec:dis}. Technical proofs are deferred to Section \ref{sec:pf}. In addition to the spectral method given in Section \ref{subsec:alg} and \ref{subsec:sparse}, Appendix \ref{sec:conv} contains a new analysis of a known method that is based on a convex relaxation \citep{chen2014statistical}. Comparison of computational lower bounds for localization and detection is included in Appendix \ref{sec:complow-det}.

%%%%%%%%%%%%%%%%%%%%%%%%%%%%%%%%%%%%%%%%%%%%%%%%%%%%%%%%%
\section{Computational Boundary}
\label{sec:comp-bdr}
%%%%%%%%%%%%%%%%%%%%%%%%%%%%%%%%%%%%%%%%%%%%%%%%%%%%%%%%%

We characterize in this section  the computational boundaries for the submatrix localization problem.  Sections  \ref{subsec:complwd}  and \ref{subsec:alg} consider respectively the computational lower bound and upper bound. The computational lower bound given in Theorem \ref{CLL.Thm} is based on the hidden clique hypothesis.

\subsection{Algorithmic Reduction and Computational Lower Bound}
\label{subsec:complwd}

Theoretical Computer Science identifies a range problems which are believed to be ``hard,'' in the sense that in the worst-case the required computation grows exponentially with the size of the problem. Faced with a new computational problem, one might try to reduce any of the ``hard'' problems to the new problem, and therefore claim that the new problem is as hard as the rest in this family. Since statistical procedures typically deal with a random (rather than worst-case) input, it is natural to seek token problems that are believed to be computationally difficult on average with respect to some distribution on instances. The hidden clique problem is one such example (for recent results on this problem, see \cite{feldman2013statistical,deshpande2013finding}). While there exists a quasi-polynomial algorithm, no polynomial-time method (for the appropriate regime, described below) is known. Following several other works on reductions for statistical problems, we work under the hypothesis that no polynomial-time method exists.

Let us make the discussion more precise. Consider the hidden clique model $\mc{G}(N, \kappa)$ where $N$ is the total number of nodes and $\kappa$ is the number of clique nodes. In the hidden clique model, a random graph instance is generated in the following way. Choose $\kappa$ clique nodes uniformly at random from all the possible choices, and connect all the edges within the clique. For all the other edges, connect with probability $1/2$. %Remark that $\mc{G}(N,\kappa)$ is a random graph that is believed to be on average hard for $\kappa \precsim N^{1/2}$.

\paragraph{Hidden Clique Hypothesis for Localization ($\sf HC_{l}$)} 
Consider the random instance of hidden clique model $\mc{G}(N,\kappa)$.
For any sequence $\kappa(N)$ such that $\kappa(N) \leq N^\beta$ for some $0 < \beta < 1/2$, there is no randomized polynomial time algorithm that can find the planted clique with probability tending to $1$ as $N \rightarrow \infty$. Mathematically, define the randomized polynomial time algorithm class $\sf PolyAlg$ as the class of algorithms $\mathcal{A}$ that satisfies
$$
\varlimsup\limits_{N,\kappa(N) \rightarrow \infty} \sup_{\mathcal{A} \in {\sf PolyAlg}} \mathbb{E}_{\sf Clique} \mathbb{P}_{\mc{G}(N,\kappa)|{\sf Clique}}\left( {\sf runtime~of}~\mathcal{A}~ {\sf not~polynomial~in}~N \right) = 0.
$$
Then
$$
\varliminf\limits_{N,\kappa(N) \rightarrow \infty} \inf_{\mathcal{A} \in {\sf PolyAlg}}  \mathbb{E}_{\sf Clique} \mathbb{P}_{\mc{G}(N,\kappa)|{\sf Clique}}\left( {\sf clique~set~returned~by~}\mathcal{A}~{\sf not~correct} \right) > 0,
$$
where $\mathbb{P}_{\mc{G}(N,\kappa)|{\sf Clique}}$ is the (possibly more detailed due to randomness of algorithm) $\sigma$-field conditioned on the clique location and $\mathbb{E}_{\sf Clique}$ is with respect to uniform distribution over all possible clique locations.

\paragraph{Hidden Clique Hypothesis for Detection ($\sf HC_{d}$)}
Consider the hidden clique model $\mc{G}(N,\kappa)$.
For any sequence of $\kappa(N)$ such that $\kappa(N) \leq N^{\beta}$ for some $0 < \beta < 1/2 $, there is no randomized polynomial time algorithm that can distinguish between 
\begin{align*}
	{\sf H_0}:  ~~\mathcal{P}_{\sf ER} \quad  {\sf v.s.} \quad
	{\sf H_\alpha}:  ~~\mathcal{P}_{\sf HC}
\end{align*}
with probability going to $1$ as $N \rightarrow \infty$. Here $\mathcal{P}_{\sf ER}$ is the Erd\H{o}s-R\'{e}nyi model, while $\mathcal{P}_{\sf HC}$ is the hidden clique model with uniform distribution on all the possible locations of the clique. More precisely,
$$
\varliminf\limits_{N,\kappa(N) \rightarrow \infty} \inf_{\mathcal{A} \in {\sf PolyAlg}}  \mathbb{E}_{\sf Clique} \mathbb{P}_{\mc{G}(N,\kappa)|{\sf Clique}}\left( {\sf detection~decision~returned~by~}\mathcal{A}~{\sf wrong} \right) > 0,
$$
where $\mathbb{P}_{\mc{G}(N,\kappa)|{\sf Clique}}$ and $\mathbb{E}_{\sf Clique}$ are the same as defined in ${\sf HC_l}$.

The hidden clique hypothesis has been used recently by several authors to claim computational intractability of certain statistical problems. In particular, \cite{berthet2013computational,ma2013computational} assumed the hypothesis ${\sf HC_{d}}$ and \cite{wang2014statistical} used $\sf HC_{l}$. 
Localization is harder than detection, in the sense that if an algorithm $\mathcal{A}$ solves the localization problem with high probability, it also correctly solves the detection problem. Assuming that no polynomial time algorithm can solve the detection problem implies impossibility results in localization as well. In plain language, $\sf HC_{l}$ is a milder hypothesis than $\sf HC_{d}$. 

We will provide two computational lower bound results, one for localization and the other for detection, in Theorems \ref{CLL.Thm} and \ref{CLD.Thm}. The latter one will be deferred to Appendix \ref{sec:complow-det} to contrast the difference of constructions between localization and detection. The detection computational lower bound was first proved in \cite{ma2013computational}. For the localization computational lower bound, to the best of our knowledge, there is no proof in the literature. Theorem \ref{CLL.Thm} ensures the upper bound in Lemma \ref{lma:sp-alg} being sharp. 

\begin{theorem}[Computational Lower Bound for Localization]
	\label{CLL.Thm}
	Consider the submatrix model \eqref{submat.Mat.Frm} with parameter tuple $(m = n, k_m \asymp k_n \asymp n^{\alpha}, \lambda/\sigma = n^{-\beta})$, where $\frac{1}{2}<\alpha<1,~ \beta>0$. Under the computational assumption $\sf HC_{l}$, if 
	\begin{align*}
		\frac{\lambda}{\sigma} \precsim \sqrt{\frac{m+n}{k_m k_n}} ~~ \Rightarrow ~~\beta > \alpha -\frac{1}{2},
	\end{align*}
	it is not possible to localize the true support of the submatrix with probability going to $1$ within polynomial time.
\end{theorem}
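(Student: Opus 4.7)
My plan is to prove the lower bound by a polynomial-time reduction from the planted clique search problem $\mathcal{G}(N,\kappa)$ to submatrix localization, so that a polynomial-time localizer $\mathcal{A}$ operating at any SNR $\lambda/\sigma = n^{-\beta}$ with $\beta > \alpha - 1/2$ would yield a polynomial-time clique-finder, contradicting the hypothesis $\sf HC_l$. To operate in the dense regime $\alpha > 1/2$ while $\kappa \leq N^{1/2}$, the reduction must inflate the clique footprint by cloning and concurrently dilute the effective SNR by adding Gaussianizing noise, in the spirit of the detection reduction of \cite{ma2013computational}, but modified so that the \emph{position} of the planted block can be inverted back to the clique vertices (which the detection reduction discards).

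Given $G \sim \mathcal{G}(N,\kappa)$, the reduction: (i) applies a uniformly random vertex relabeling; (ii) bipartitions $[N]$ into a row side $V_R$ and column side $V_C$ of size $N/2$ each and extracts the bipartite adjacency $B \in \{0,1\}^{N/2\times N/2}$; (iii) draws uniformly random independent cloning maps $r,c:[n]\to [N/2]$; and (iv) outputs
\begin{equation*}
X_{ij} \;=\; \frac{2B_{r(i),c(j)}-1}{\sigma'} \;+\; W_{ij},
\end{equation*}
where $W_{ij}$ is i.i.d.\ noise designed so that the marginal distribution of $X_{ij}$ on off-clique entries matches a prescribed sub-Gaussian law. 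Setting $\sigma' = n^{\beta}$, $\kappa \asymp n^{1-\alpha-\eta}$, and $N \asymp n^{2(1-\alpha)-2\eta}$ for a small slack $\eta>0$, hypergeometric/binomial concentration gives $|R_m|,|C_n| \asymp n\kappa/N = n^{\alpha}$ with high probability, while $\kappa \leq N^{1/2-\Theta(\eta)}$ stays inside the hard regime of $\sf HC_l$. The sets $R_m = r^{-1}(V_R\cap \mathrm{clique})$ and $C_n = c^{-1}(V_C\cap \mathrm{clique})$ index the planted submatrix in $X$ with signal $\lambda = 1/\sigma'$, and given $\mathcal{A}$'s estimate $(\widehat R_m, \widehat C_n)$, the clique of $G$ is read off as $r(\widehat R_m) \cup c(\widehat C_n)$.

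The technical heart is a total-variation bound showing that the law of $X$ lies within $o(1)$ TV of some element of $\mathcal{M}(n,n,k_m,k_n,n^{-\beta})$, so that $\mathcal{A}$'s minimax guarantee transfers to the reduction output. I would choose the noise of the comparison model to be the Rademacher--Gaussian convolution occurring naturally off the block, reducing the discrepancy to two sources: (a) an entrywise divergence on the $k_m k_n$ in-block entries, and (b) weak dependencies introduced whenever $r(i)=r(i')$ or $c(j)=c(j')$ cause two entries to share a common $B$-coordinate. A chi-square expansion on (a) contributes $O(k_m k_n/\sigma'^{2(p+1)})$ when $W_{ij}$ matches the first $p$ moments of the natural noise; source (b) is controlled by a birthday-style bound on coincidences in $r,c$.

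The main obstacle is the tightness of this TV analysis: with a plain Gaussian $W_{ij}$ the chi-square estimate only closes for $\beta > \alpha/2$, short of the threshold $\beta > \alpha-1/2$ claimed by the theorem. Closing the remaining gap requires designing $W_{ij}$ as a higher-moment-matched distribution (with $p$ growing like $1/(2\alpha-1)$) that cancels the quadratic and higher-order Rademacher contamination, with the polynomial slack $\eta>0$ absorbing the residual losses. A secondary issue, specific to localization rather than detection, is that conditional on $|V_R\cap \mathrm{clique}|$ and $|V_C\cap \mathrm{clique}|$ the preimages $R_m, C_n$ are uniformly distributed over subsets of $[n]$ of the prescribed cardinalities, so that the worst-case supremum over $M\in\mathcal{M}$ dominates the reduction's average-case risk and the reverse mapping exactly recovers the clique; verifying this positional uniformity and combining it with the TV bound produces the polynomial-time clique-finder, contradicting $\sf HC_l$.
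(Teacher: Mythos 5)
Your proposal is structurally incomplete at exactly the step that carries the theorem, and the repair you sketch cannot work. In your construction each entry of $X$ is a \emph{single} (cloned) entry of $B$ plus independent noise $W_{ij}$, so the in-block entries are deterministically $1/\sigma'$ plus $W$, while the off-block entries are $\pm 1/\sigma'$ plus $W$. The conditional variance of an in-block entry is therefore smaller than that of an off-block entry by exactly $1/\sigma'^2=\lambda^2$, \emph{for every choice of $W$}: no distribution can match even the second moment of $W$ against that of $W+\varepsilon/\sigma'$, so the ``higher-moment-matched $W$ with $p\sim 1/(2\alpha-1)$'' you invoke to push the chi-square bound past $\beta>\alpha/2$ does not exist. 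The per-entry divergence is pinned at order $(\lambda/\sigma)^4$, the total at order $k_mk_n(\lambda/\sigma)^4=n^{2\alpha-4\beta}$, and your reduction stalls at $\beta>\alpha/2$, which strictly misses the regime $\alpha-\tfrac12<\beta\le\alpha/2$ that the theorem must cover. A second, independent problem is your treatment of the cloning collisions as a ``birthday-style'' rare event: since $\alpha>1/2$ forces $N\asymp n^{2(1-\alpha)-2\eta}\ll n$, the maps $r,c$ are massively many-to-one, every entry of $B$ is reused polynomially many times, and $X$ contains large groups of near-duplicate rows and columns. These correlations are statistically detectable (hence the law of $X$ is at total-variation distance $1-o(1)$ from every product-noise member of $\mathcal{M}$) for small $\beta$, which is precisely the hard part of the claimed regime; they cannot be absorbed into an $o(1)$ TV term. (Your parameter bookkeeping also lands at $\kappa\asymp N^{1/2}$ and $k\asymp n^{\alpha+\eta}$ rather than strictly inside the hard regime, but that is a minor, fixable slip compared with the two issues above.)

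The paper's proof avoids both obstructions by a different mechanism: instead of cloning single entries and adding external noise, it takes $N=n$, bootstraps $l\asymp n^{\beta}$ row-index and column-index resamplings of the upper-right block $G_{UR}$ of a hidden clique instance $\mathcal{G}(2n,2\kappa)$, and \emph{averages} the $l^2$ resampled matrices. Each output entry is then an average of $l^2$ Rademacher variables of which only $O^*(1)$ are clique (deterministic) terms, so the construction is verified to be \emph{exactly} a member of the sub-Gaussian submatrix model \eqref{submat.Mat.Frm} with signal separation $\asymp 1/l=n^{-\beta}$ and noise parameter $1-o(1)$ --- no total-variation or moment-matching argument is needed, which is what lets the threshold come out as $\kappa l\asymp n^{\alpha}$, $\kappa\asymp n^{\alpha-\beta}\precsim n^{1/2}=N^{1/2}$, i.e.\ $\beta>\alpha-\tfrac12$. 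Finally, because the localized support only identifies bootstrap positions hitting the clique, the paper adds a clean-up step (Lemma~\ref{lma:neig}: degree counting inside the size-$k$ candidate set, valid since $\kappa\succsim\sqrt{k\log N}$) to recover the clique exactly and contradict ${\sf HC}_l$. If you want to salvage your route, you would need to replace single-entry cloning by some form of aggregation so that the clique contributes only a vanishing fraction of each signal entry's randomness; as written, the proposal does not prove the stated threshold.
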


Our algorithmic reduction for localization relies on a \emph{bootstrapping} idea based on the matrix structure and a cleaning-up procedure introduced in Lemma \ref{lma:neig} given in Section \ref {sec:pf}. These two key ideas offer new insights in addition to the usual computational lower bound arguments. Bootstrapping introduces an additional randomness on top of the randomness in the hidden clique. Careful examination of these two $\sigma$-fields allows us to write the resulting object into mixture of submatrix models. For submatrix localization we need to transform back the submatrix support to the original hidden clique support exactly, with high probability. In plain language, even though we lose track of the exact location of the support when reducing the hidden clique to submatrix model, we can still recover the exact location of the hidden clique with high probability. For technical details of the proof, please refer to Section \ref{sec:pf}.

\subsection{Adaptive Spectral Algorithm and Computational Upper Bound}
\label{subsec:alg}

In this section, we introduce linear time algorithm that solves the submatrix localization problem above the computational boundary $\sf SNR_c$. Our proposed localization Algorithms \ref{SVD1.Alg} and \ref{SVDThre.Alg} is motivated by the spectral algorithm in random graphs \citep{mcsherry2001spectral,ng2002spectral}.

\medskip
\begin{algorithm}[H]
%\SetAlgoLined
\KwIn{$X \in \mbb{R}^{m \times n}$ the data matrix. }
\KwOut{A subset of the row indexes $\hat{R}_m$ and a subset of column indexes $\hat{C}_n$ as the localization sets of the submatrix.}
    1. Compute top left and top right singular vectors $U_{\cdot 1}$ and $V_{\cdot 1}$, respectively (these correspond to the SVD $X = U \Sigma V^T$)\;
    % 1. Perform singular value decomposition (SVD) on $X = U \Sigma V^T$, denote the top left singular vector as $U_{\cdot 1}$ and the top right singular vector as $V_{\cdot 1}$\;
    2. To compute $\hat{C}_n$, calculate the inner products $U_{\cdot 1}^T X_{\cdot j}, 1\leq j \leq n$. These values form two clusters. Similarly, for the $\hat{R}_m$, calculate $X_{i \cdot} V_{\cdot 1}, 1\leq i \leq m$ and obtain two separated clusters. A simple thresholding procedure returns the subsets $\hat{C}_n$ and $\hat{R}_m$.
\caption{Vanilla Spectral Projection Algorithm for Dense Regime} 
\label{SVD1.Alg}
\end{algorithm}

\medskip
The proposed algorithm has several advantages over the localization algorithms that appeared in literature. First, it is a linear time algorithm (that is, $\Theta(mn)$ time complexity). The top singular vectors can be evaluated using fast iterative power methods, which is efficient both in terms of space and time. Secondly, this algorithm does not require the prior knowledge of $k_m$ and $k_n$ and automatically adapts to the true submatrix size.

% \subsection{Theoretical Guarantee for Spectral Algorithm}
% \label{subsec:spec}

Lemma~\ref{lma:sp-alg} below justifies the effectiveness of the spectral algorithm. 

\begin{lemma}[Guarantee for Spectral Algorithm]
	\label{lma:sp-alg}
	Consider the submatrix model \eqref{submat.Mat.Frm}, Algorithm \ref{SVD1.Alg} and assume $\min\{k_m,k_n\} \succsim \max\{m^{1/2},n^{1/2}\}$. There exist a universal $C>0$ such that when 
	$$
	\frac{\lambda}{\sigma} \geq C \cdot \left(  \sqrt{\frac{m \vee n}{k_m k_n}} + \sqrt{\frac{\log n}{k_m} \vee \frac{\log m}{k_n}} \right),
	$$
	the spectral method succeeds in the sense that $\hat{R}_m = R_m, \hat{C}_n = C_n$ with probability at least $1 - m^{-c} - n^{-c} - 2 \exp\left(-c(m+n)\right)$.
\end{lemma}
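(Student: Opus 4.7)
\medskip

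My plan is to reduce the analysis of Algorithm~\ref{SVD1.Alg} to a spectral perturbation bound, and then strengthen it to an entry-wise ($\ell_\infty$) estimate via a leave-one-out construction. First, note that $M = \lambda\,1_{R_m} 1_{C_n}^T$ is rank-one with top singular value $\sigma_1(M) = \lambda\sqrt{k_m k_n}$ and unit singular vectors $u = 1_{R_m}/\sqrt{k_m}$ and $v = 1_{C_n}/\sqrt{k_n}$. Standard sub-Gaussian matrix concentration gives $\|Z\|_2 \leq C_1\sigma(\sqrt{m}+\sqrt{n})$ with probability at least $1 - 2e^{-c(m+n)}$. Under the first term of the assumed SNR bound, $\lambda\sqrt{k_m k_n} \succsim \sigma(\sqrt{m}+\sqrt{n})$, so $\sigma_1(X) \asymp \lambda\sqrt{k_m k_n}$ and $\sigma_2(X) = O(\sigma(\sqrt{m}+\sqrt{n}))$, producing a spectral gap of order $\lambda\sqrt{k_m k_n}$.

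Wedin's $\sin\Theta$ theorem then yields $\min_{\epsilon=\pm 1}\|\epsilon V_{\cdot 1} - v\|_2 \leq C_2\sigma(\sqrt{m}+\sqrt{n})/(\lambda\sqrt{k_m k_n})$, and similarly for $U_{\cdot 1}$. For the thresholding step to succeed, however, we need an entry-wise bound $\|V_{\cdot 1} - v\|_\infty \ll 1/\sqrt{k_n}$, and the trivial implication $\ell_\infty \leq \ell_2$ is loose by a factor of $\sqrt{k_n}$, which is precisely the gap between the desired SNR $\sqrt{(m\vee n)/(k_m k_n)}$ and what a pure $\ell_2$ argument provides. I would close this gap by a leave-one-out construction: for each column index $j$, let $X^{(-j)}$ denote $X$ with its $j$-th column replaced by zero, and let $U^{(-j)}$ be the top left singular vector of $X^{(-j)}$. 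Since $X^{(-j)}$ does not depend on $Z_{\cdot j}$, the random vector $U^{(-j)}$ is independent of $Z_{\cdot j}$, and a standard sub-Gaussian tail bound plus a union bound over $j \in [n]$ gives $\max_j |(U^{(-j)})^T Z_{\cdot j}| \leq C_3\sigma\sqrt{\log n}$ with probability at least $1 - 2n^{-c}$.

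The remaining piece is to control the ``residual'' $X_{\cdot j}^T(U_{\cdot 1} - U^{(-j)})$ that arises from writing $V_{\cdot 1,j} = \sigma_1^{-1}(X_{\cdot j}^T U^{(-j)} + X_{\cdot j}^T(U_{\cdot 1} - U^{(-j)}))$. The key algebraic observation here is that $XX^T - X^{(-j)}(X^{(-j)})^T = X_{\cdot j} X_{\cdot j}^T$ is a rank-one, positive semi-definite perturbation in the direction of $X_{\cdot j}$. Expanding $U_{\cdot 1}$ around $U^{(-j)}$ using this rank-one update, together with the spectral gap of $X^{(-j)}$ (which matches that of $X$ up to lower-order terms), and the deterministic bound $\|X_{\cdot j}\|_2 \leq \lambda\sqrt{k_m}\,\mathbf{1}\{j \in C_n\} + C\sigma\sqrt{m}$, lets me bound the residual uniformly in $j$. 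Assembling these pieces shows that $|V_{\cdot 1,j}| < \tfrac{1}{2}\cdot k_n^{-1/2}$ for $j \notin C_n$ and $|V_{\cdot 1,j}| > \tfrac{1}{2}\cdot k_n^{-1/2}$ for $j \in C_n$, so thresholding at $1/(2\sqrt{k_n})$ recovers $\hat C_n = C_n$ exactly. The argument for $\hat R_m = R_m$ is entirely symmetric, swapping the roles of $U_{\cdot 1}$ and $V_{\cdot 1}$ and of rows and columns.

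The main obstacle will be the leave-one-out step: a naive application of Cauchy-Schwarz and Wedin to bound $|X_{\cdot j}^T(U_{\cdot 1} - U^{(-j)})|$ by $\|X_{\cdot j}\|_2\cdot\|U_{\cdot 1} - U^{(-j)}\|_2$ loses a factor of $k_n^{1/4}$ and yields a strictly weaker SNR requirement than the one claimed. The right way forward is to exploit both the rank-one form of the perturbation $X_{\cdot j} X_{\cdot j}^T$ to $XX^T$ (which aligns the eigenvector correction with $X_{\cdot j}$) and the independence of $U^{(-j)}$ from $Z_{\cdot j}$, so that cross terms involving $Z_{\cdot j}^T U^{(-j)}$ can be treated as sums of independent sub-Gaussian summands rather than via a worst-case Cauchy-Schwarz step. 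Once this entry-wise $\ell_\infty$ bound is secured, the probability budget is closed by a union bound of the noise-matrix spectral event, the sub-Gaussian concentration of $(U^{(-j)})^T Z_{\cdot j}$ over $j \in [n]$, and the symmetric row-side events, giving the stated $1 - m^{-c} - n^{-c} - 2\exp(-c(m+n))$ success probability.
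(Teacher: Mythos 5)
Your first half (rank-one structure of $M$, the bound $\|Z\|_2 \precsim \sigma(\sqrt{m}+\sqrt{n})$ from Lemma \ref{lma:rmt}, Weyl, and the sin-theta bound of Lemma \ref{lma:dkw}) coincides with the paper's argument, but from there you take a genuinely different route, and your motivating premise is not quite right. The paper never needs an $\ell_\infty$ bound on the singular vectors: it thresholds the projected columns $\mathcal{P}_{\tilde U}X_{\cdot j}$, decomposing the error as $\|\mathcal{P}_{\tilde U}Z_{\cdot j}\|_{\ell_2}+\|(\mathcal{P}_{\tilde U}-I)M_{\cdot j}\|_{\ell_2}$, where the first term is $O(\sigma\sqrt{\log n})$ by the projection lemma (Lemma \ref{Proj.Lma}) once $\tilde U$ is made independent of the projected copy via Tsybakov's cloning (Gaussian case) or subsampling, and the second term is at most the sin-theta error times $\|M_{\cdot j}\|_{\ell_2}=\lambda\sqrt{k_m}$, i.e.\ $O(\sigma\sqrt{(m\vee n)/k_n})$. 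Since the cluster separation is $\|M_{\cdot j}-M_{\cdot j'}\|_{\ell_2}=\lambda\sqrt{k_m}$, this per-column $\ell_2$ argument already gives exactly the claimed SNR; the $\sqrt{k_n}$ loss you attribute to ``a pure $\ell_2$ argument'' arises only if one insists on reading the support off the entries of $V_{\cdot 1}$ using the eigenvector $\ell_2$ error alone. Your leave-one-out plan, by contrast, targets a true entrywise bound on $V_{\cdot 1}$; this is a viable alternative (it is the mechanism of modern entrywise eigenvector analyses and would recover the same threshold), and it has the genuine advantage of avoiding the cloning/subsampling device, which in the paper either uses Gaussianity or slightly modifies the algorithm.

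The gap is that the decisive estimate in your route---a uniform-in-$j$ bound on $|X_{\cdot j}^T(U_{\cdot 1}-U^{(-j)})|$ small compared with $\sigma_1(X)/\sqrt{k_n}\asymp\lambda\sqrt{k_m}$---is asserted, not proven. You correctly observe that Cauchy--Schwarz combined with a Wedin bound for the rank-one update $X_{\cdot j}X_{\cdot j}^T$ is too lossy, but the proposed remedy (``exploit the rank-one form and the independence of $U^{(-j)}$ from $Z_{\cdot j}$'') is only described qualitatively; making it precise is the entire difficulty of the leave-one-out method. One needs, for instance, a self-consistent (bootstrap/contraction) inequality showing $\|U_{\cdot 1}-U^{(-j)}\|_{\ell_2}$ is controlled by quantities like $|Z_{\cdot j}^T U^{(-j)}|/\sigma_1(X)$ with the signal part of $X_{\cdot j}$ absorbed through the alignment of $U^{(-j)}$ with $1_{R_m}/\sqrt{k_m}$, together with a consistent choice of sign/orientation for each $U^{(-j)}$, before the union bound over $j\in[n]$ (and the symmetric row argument) closes the proof. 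As written, your proposal is a sound plan whose pivotal step is missing; if you want the short proof at the claimed SNR, the paper's sample-splitting plus projection-lemma argument is the more economical path.
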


% \subsection{Computational Boundary for Localization}
% \label{subsec:comp-bdr}

\subsection{Dense Regime}
\label{subsec:dense}
We are now ready to state the SNR boundary for polynomial-time algorithms (under an appropriate computational assumption), thus excluding the exhaustive search procedure. The results hold under the dense regime when $k \succsim n^{1/2}$.

\begin{theorem}[Computational Boundary for Dense Regime]
	\label{thm:comp-bdr}
	Consider the submatrix model \eqref{submat.Mat.Frm} and assume $\min\{k_m,k_n\} \succsim \max\{m^{1/2},n^{1/2}\}$. There exists a critical rate 
	$$
	{\sf SNR_c} \asymp  \sqrt{\frac{m \vee n}{k_m k_n}} + \sqrt{\frac{\log n}{k_m} \vee \frac{\log m}{k_n}}
	$$
	for the signal to noise ratio $\sf SNR_c$ such that for $\lambda/\sigma \succsim {\sf SNR_c}$, both the adaptive linear time Algorithm \ref{SVD1.Alg} and the robust polynomial time Algorithm \ref{CR1.Alg} will succeed in submatrix localization, i.e., $\hat{R}_m = R_m, \hat{C}_n = C_n$, with high probability. For $\lambda/\sigma \precsim {\sf SNR_c}$, there is no polynomial time algorithm that will work under the hidden clique hypothesis $\sf HC_{l}$. 
\end{theorem}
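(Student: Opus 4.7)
The plan is to assemble Theorem \ref{thm:comp-bdr} from three building blocks already established in the paper: the upper bound for the spectral method (Lemma \ref{lma:sp-alg}), the upper bound for the convex relaxation (Algorithm \ref{CR1.Alg}, analyzed in the appendix), and the matching lower bounds coming from Theorem \ref{CLL.Thm} together with the statistical impossibility result (Theorem \ref{thm:stat-bdr}). The content of this theorem is therefore primarily one of assembly; the hard analytic work lives in the supporting lemmas.

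For the upper bound, I would simply invoke Lemma \ref{lma:sp-alg} verbatim, since the hypothesis $\min\{k_m,k_n\} \succsim \max\{m^{1/2}, n^{1/2}\}$ of Theorem \ref{thm:comp-bdr} is exactly the dense-regime assumption of the lemma. Whenever $\lambda/\sigma \geq C\,{\sf SNR_c}$, Lemma \ref{lma:sp-alg} guarantees that Algorithm \ref{SVD1.Alg} returns $(\hat R_m,\hat C_n)=(R_m,C_n)$ with probability $1 - m^{-c} - n^{-c} - 2e^{-c(m+n)}$, which tends to one as $m,n\to\infty$. The parallel statement for Algorithm \ref{CR1.Alg} is obtained by quoting the convex-relaxation analysis in Appendix \ref{sec:conv}; its role in the theorem is to show that a robust polynomial-time procedure also attains the boundary, even though it is not linear-time.

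For the lower bound, I would split ${\sf SNR_c}$ into its two summands and argue impossibility for each separately. The ``computational'' summand $\sqrt{(m\vee n)/(k_m k_n)}$ is handled by Theorem \ref{CLL.Thm}: under $\sf HC_l$, no polynomial-time algorithm can localize when $\lambda/\sigma \precsim \sqrt{(m+n)/(k_m k_n)}$, which is of the same order (up to constants) as the first summand in ${\sf SNR_c}$. The ``statistical'' summand $\sqrt{(\log n)/k_m \vee (\log m)/k_n}$ is handled by Theorem \ref{thm:stat-bdr}: any algorithm, polynomial-time or not, fails below ${\sf SNR_s}$, so in particular every algorithm in $\sf PolyAlg \subset \sf AllAlg$ fails. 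Combining the two regimes: if $\lambda/\sigma \precsim {\sf SNR_c}$, then $\lambda/\sigma$ is $\precsim$ at least one of the two summands, and one of the two impossibility results applies. Hence $\inf_{\mathcal A\in\sf PolyAlg}\sup_{M\in\mathcal M}\mathbb{P}(\hat R_m\neq R_m \text{ or } \hat C_n\neq C_n)$ is bounded below by a positive constant in the limit.

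The main subtlety, and the place I would be most careful, is that Theorem \ref{CLL.Thm} is stated under the square parametrization $m=n$, $k_m \asymp k_n \asymp n^{\alpha}$, $\lambda/\sigma = n^{-\beta}$ with $\tfrac12 < \alpha < 1$, whereas Theorem \ref{thm:comp-bdr} is formulated for general $(m,n,k_m,k_n)$ in the dense regime. I would bridge this gap by observing that the reduction used in Theorem \ref{CLL.Thm} in fact only needs the dense condition $\min\{k_m,k_n\}\succsim \max\{m^{1/2},n^{1/2}\}$ to carry through (the specific polynomial parametrization is used only to translate from the hidden-clique scaling $\kappa \leq N^{1/2}$), and that the bootstrapping + cleaning-up argument in Lemma \ref{lma:neig} is formulated coordinate-wise and thus invariant under rectangular aspect ratios. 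If any step of the reduction does rely on $m=n$, a padding argument (embedding a rectangular instance into a square hidden-clique instance of dimension $N=m\vee n$) recovers the general statement. Once this extension is in place, the three pieces fit together as described, completing the proof.
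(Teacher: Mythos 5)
Your proposal is correct and follows essentially the same route as the paper, which proves this theorem simply by assembling Lemma \ref{lma:sp-alg} (and the convex-relaxation guarantee of Lemma \ref{lma:cv-rlx} in Appendix \ref{sec:conv}) for the upper bound with Theorem \ref{CLL.Thm} for the lower bound. Your two additional refinements --- invoking the statistical lower bound to cover the regime where the $\sqrt{\frac{\log n}{k_m} \vee \frac{\log m}{k_n}}$ summand dominates, and noting that the square parametrization in Theorem \ref{CLL.Thm} must be bridged to the general rectangular dense regime --- are points the paper leaves implicit, and handling them explicitly only makes the assembly more complete.
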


The proof of the above theorem is based on the theoretical justification of the spectral Algorithm \ref{SVD1.Alg} and convex relaxation Algorithm \ref{CR1.Alg}, and the new computational lower bound result for localization in Theorem \ref{CLL.Thm}. 
%The following sections contribute to achieving this goal. 
We remark that the analyses can be extended to multiple, even growing number of submatrices case.
We postpone a proof of this fact to Section \ref{sec:ext} for simplicity and focus on the case of a single submatrix.

\subsection{Sparse Regime}
\label{subsec:sparse}
Under the sparse regime when $k \precsim n^{1/2}$, a naive plug-in of Lemma~\ref{lma:sp-alg} requires the ${\sf SNR_c}$ to be larger than $\Theta(n^{1/2}/k) \succsim \sqrt{\log n}$, which implies the vanilla spectral Algorithm~\ref{SVD1.Alg} is outperformed by simple entrywise thresholding. However, a modified version with entrywise soft-thresholding as a preprocessing de-noising step turns out to provide near optimal performance in the sparse regime. Before we introduce the formal algorithm, let us define the soft-thresholding function at level $t$ to be
\begin{align}
	\label{eq:soft-thres}
\eta_{t} (y) = \sign(y) (|y| - t)_+.
\end{align}
Soft-thresholding as a de-noising step achieving optimal bias-and-variance trade-off has been widely understood in the wavelet literature, for example, see \citet{donoho1998minimax}.

Now we are ready to state the following de-noised spectral Algorithm~\ref{SVDThre.Alg} to localize the submatrix under the sparse regime when $k \precsim n^{1/2}$. 

\medskip
\begin{algorithm}[H]
%\SetAlgoLined
\KwIn{$X \in \mbb{R}^{m \times n}$ the data matrix, a thresholding level $t = \Theta(\sigma \sqrt{\log \frac{m \vee n}{k_m k_n}})$. }
\KwOut{A subset of the row indexes $\hat{R}_m$ and a subset of column indexes $\hat{C}_n$ as the localization sets of the submatrix.}
	1. Soft-threshold each entry of the matrix $X$ at level $t$, denote the resulting matrix as $\eta_t(X)$ \;
    2. Compute top left and top right singular vectors $U_{\cdot 1}$ and $V_{\cdot 1}$ of matrix $\eta_t(X)$, respectively (these correspond to the SVD $\eta_t(X) = U \Sigma V^T$)\;
    % 1. Perform singular value decomposition (SVD) on $X = U \Sigma V^T$, denote the top left singular vector as $U_{\cdot 1}$ and the top right singular vector as $V_{\cdot 1}$\;
    3. To compute $\hat{C}_n$, calculate the inner products $U_{\cdot 1}^T \cdot \eta_t(X_{\cdot j}), 1\leq j \leq n$. These values form two clusters. Similarly, for the $\hat{R}_m$, calculate $\eta_t(X_{i \cdot}) \cdot V_{\cdot 1}, 1\leq i \leq m$ and obtain two separated clusters. A simple thresholding procedure returns the subsets $\hat{C}_n$ and $\hat{R}_m$.
\caption{De-noised Spectral Algorithm for Sparse Regime} 
\label{SVDThre.Alg}
\end{algorithm}

\medskip

Lemma~\ref{lma:thres-sp-alg} below provides the theoretical guarantee for the above algorithm when $k \precsim n^{1/2}$. 

\begin{lemma}[Guarantee for De-noised Spectral Algorithm]
	\label{lma:thres-sp-alg}
	Consider the submatrix model \eqref{submat.Mat.Frm}, soft-thresholded spectral Algorithm \ref{SVDThre.Alg} with thresholded level $\sigma t$, and assume $\min \{k_m,k_n\} \precsim \max \{m^{1/2},n^{1/2}\}$. There exist a universal $C>0$ such that when 
	$$
	\frac{\lambda}{\sigma} \geq C \cdot \left(  \left[ \sqrt{\frac{m \vee n}{k_m k_n}} + \sqrt{\frac{\log n}{k_m} \vee \frac{\log m}{k_n}} \right] \cdot e^{-t^2/2} + t \right),
	$$
	the spectral method succeeds in the sense that $\hat{R}_m = R_m, \hat{C}_n = C_n$ with probability at least $1 - m^{-c} - n^{-c} - 2 \exp\left(-c(m+n)\right)$. Further if we choose $t = \Theta(\sigma \sqrt{\log \frac{m \vee n}{k_m k_n}})$ as the optimal thresholding level, we have de-noised spectral algorithm works when
	$$
	\frac{\lambda}{\sigma} \succsim \sqrt{\log \frac{m\vee n}{k_m k_n}}.
	$$
\end{lemma}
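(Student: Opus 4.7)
The plan is to reproduce the analysis behind Lemma~\ref{lma:sp-alg} but applied to the de-noised matrix $\eta_t(X)$, treating soft-thresholding as producing a new ``signal + noise'' decomposition with shrunken signal and much smaller variance. First, write $\eta_t(X) = \widetilde{M} + E$ with $\widetilde{M} := \mathbb{E}[\eta_t(X)]$ and $E := \eta_t(X) - \widetilde{M}$. Because the entries of $Z$ are symmetric zero-mean sub-Gaussian, $\widetilde{M}_{ij} = 0$ off the signal block $R_m \times C_n$; on the signal block, for $\lambda \gtrsim \sigma t$ a direct estimate gives $\widetilde{M}_{ij} = \mathbb{E}\eta_t(\lambda + Z_{ij}) \geq \lambda - \sigma t - o(\sigma t)$. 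Thus $\widetilde{M}$ remains a rank-one block matrix with the same support as $M$ and effective signal $\widetilde{\lambda} \gtrsim \lambda - \sigma t$, so its leading singular value satisfies $\sigma_1(\widetilde{M}) \asymp (\lambda - \sigma t)\sqrt{k_m k_n}$.

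Next, I would control $\|E\|_2$ using the sparsity produced by $\eta_t$. The two key inputs are $\mathbb{P}(\eta_t(Z_{ij}) \neq 0) \leq 2 e^{-c t^2/2}$ and $\mathbb{E}[\eta_t(Z_{ij})^2] \lesssim \sigma^2 e^{-c t^2/2}$, both of which follow from the sub-Gaussian tail via integration by parts on $\mathbb{E}[\eta_t(Z)^2] = 2\int_{\sigma t}^\infty (z-\sigma t)^2 \, dF_{|Z|}(z)$. Applying the matrix Bernstein inequality to $E = \sum_{i,j}(\eta_t(X_{ij}) - \widetilde{M}_{ij})\, e_i e_j^\top$, with variance proxy of order $\sigma^2 e^{-c t^2/2}(m \vee n)$ and essential sup of order $\sigma\sqrt{\log(m\vee n)}$ after a preliminary truncation, yields
$$
\|E\|_2 \lesssim \sigma \, e^{-c t^2/2}\sqrt{m \vee n}
$$
with probability at least $1-(m\vee n)^{-c}$. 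A parallel row-wise concentration argument controls $\max_i \|E_{i\cdot}\|_{\ell_2}$ by $\sigma e^{-c t^2/2}\sqrt{n\log m}$ (and symmetrically for columns), which governs the final projection-step threshold and produces the $\sqrt{\log n/k_m \vee \log m/k_n}$ summand.

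With these two ingredients, the Davis--Kahan / row-and-column projection argument from Lemma~\ref{lma:sp-alg} applies verbatim to $\eta_t(X)$. The top singular vectors of $\eta_t(X)$ are close to those of $\widetilde{M}$ provided $\|E\|_2 \ll \sigma_1(\widetilde{M})$, and then the inner products $U_{\cdot 1}^{\top}\eta_t(X_{\cdot j})$ form two well-separated clusters, so the thresholding step in Algorithm~\ref{SVDThre.Alg} recovers $\hat{C}_n = C_n$ (and symmetrically $\hat{R}_m = R_m$). Combining the singular-value lower bound with the noise estimates yields the sufficient condition
$$
\lambda - \sigma t \,\gtrsim\, \sigma \, e^{-c t^2/2}\left(\sqrt{\tfrac{m \vee n}{k_m k_n}} + \sqrt{\tfrac{\log n}{k_m}\vee \tfrac{\log m}{k_n}}\right),
$$
which rearranges to the advertised inequality. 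Choosing $t = \Theta(\sqrt{\log \tfrac{m \vee n}{k_m k_n}})$ equalizes the two competing contributions and collapses the bound to $\lambda/\sigma \gtrsim \sqrt{\log \tfrac{m \vee n}{k_m k_n}}$, as stated.

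The main technical obstacle is the sparse matrix concentration step: we need the operator norm of $E$ to scale with the actual variance $\sigma^2 e^{-c t^2/2}$ rather than with the naive sub-Gaussian parameter $\sigma$, so that the $e^{-t^2/2}$ factor in the theorem actually materializes. Matrix Bernstein is the right tool, but because the entries of $\eta_t(Z)$ are unbounded one must first truncate at level $O(\sigma\sqrt{\log(mn)})$, with negligible tail loss thanks to the sub-Gaussian assumption; this truncation is the source of any residual logarithmic factors. By contrast, the bias analysis for $\widetilde{M}_{ij}$ is routine once $\lambda \gtrsim \sigma t$, and the cleanup step that converts approximate singular vectors into exact support identification is identical to the dense-regime case and requires no new ideas.
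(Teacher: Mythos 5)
Your overall strategy coincides with the paper's: threshold entrywise, observe that the surviving noise has variance of order $\sigma^2 e^{-t^2/2}$ while the signal loses only $O(\sigma t)$ per entry, rerun the dense-regime perturbation-plus-projection-clustering argument of Lemma~\ref{lma:sp-alg}, and balance the two effects with $t \asymp \sqrt{\log\frac{m\vee n}{k_m k_n}}$. The bookkeeping differs in two places. The paper decomposes $\eta_{\sigma t}(X) = M + \eta_{\sigma t}(Z) + B$, keeping the signal at full strength and collecting the thresholding error into a bias matrix $B$ supported on $R_m\times C_n$ with $|B_{ij}|\le 2\sigma t$, hence $\|B\|_2 \le 2\sigma t\sqrt{k_m k_n}$; the ``$+\,t$'' term in the SNR then enters as an additional perturbation in the Davis--Kahan step. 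You instead center at $\widetilde M = \mathbb{E}\,\eta_t(X)$, so the same $\sigma t$ cost shows up as signal shrinkage $\lambda \mapsto \lambda-\sigma t$; the two accountings are equivalent up to constants. Second, for the operator norm of the thresholded noise the paper simply re-invokes its sub-Gaussian operator-norm lemma with the reduced variance $\mathbb{E}\,\eta_{\sigma t}(Z_{ij})^2 \lesssim \sigma^2 e^{-t^2/2}$, whereas you use matrix Bernstein after a preliminary truncation. Your insistence that the bound must scale with the actual variance rather than the unreduced sub-Gaussian parameter is well placed (the $\psi_2$ norm of $\eta_{\sigma t}(Z_{ij})$ remains of order $\sigma$ for every $t$), so your route makes explicit a step the paper dispatches in one line.

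Two caveats. First, your claim that $\widetilde M_{ij}=0$ off the block invokes symmetry of the noise, which the model does not assume; without it $\mathbb{E}\,\eta_t(Z_{ij})$ is a nonzero constant $\mu_0$, contributing a rank-one component $\mu_0 1_m 1_n^T$ of operator norm $|\mu_0|\sqrt{mn}$ that must be removed (e.g.\ by subtracting the common off-block mean) before the top singular space is one-dimensional; in your expectation-centered formulation this step is load-bearing, so it should be stated. Second, matrix Bernstein with truncation at level $O(\sigma\sqrt{\log (mn)})$ carries additive terms of order $\sigma\cdot\mathrm{polylog}(m\vee n)$ that do not shrink with $e^{-t^2/2}$; when $k_m k_n$ is only polylogarithmic these can exceed the target scale $\lambda\sqrt{k_m k_n}\asymp \sigma\sqrt{k_m k_n\log\frac{m\vee n}{k_m k_n}}$, so as written your argument recovers the lemma only up to extra logarithmic factors at the very sparse end (you acknowledge residual logs, but recovering the statement verbatim there would require a sharper variance-sensitive norm bound). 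Neither caveat affects the main regime or the final conclusion $\lambda/\sigma \succsim \sqrt{\log\frac{m\vee n}{k_m k_n}}$ after the stated choice of $t$.
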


Combining the hidden clique hypothesis $\sf HC_{l}$ together with Lemma~\ref{lma:thres-sp-alg}, we have the following theorem holds under the sparse regime when $k \precsim n^{1/2}$.

\begin{theorem}[Computational Boundary for Sparse Regime]
	\label{thm:comp-bdr-sparse}
	Consider the submatrix model \eqref{submat.Mat.Frm} and assume $\max\{k_m,k_n\} \precsim \min\{m^{1/2},n^{1/2}\}$. There exists a critical rate for the signal to noise ratio $\sf SNR_c$ between
	$$
	1 \precsim {\sf SNR_c} \precsim \sqrt{\log \frac{m\vee n}{k_m k_n}}
	$$
	such that for $\lambda/\sigma \succsim \sqrt{\log \frac{m\vee n}{k_m k_n}}$, the linear time Algorithm \ref{SVDThre.Alg} will succeed in submatrix localization, i.e., $\hat{R}_m = R_m, \hat{C}_n = C_n$, with high probability. For $\lambda/\sigma \precsim 1$, there is no polynomial time algorithm that will work under the hidden clique hypothesis $\sf HC_{l}$.  
\end{theorem}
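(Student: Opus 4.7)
The theorem splits into two directions, and the plan is to handle each separately. For the achievability direction --- that $\lambda/\sigma \succsim \sqrt{\log((m\vee n)/(k_m k_n))}$ suffices for Algorithm~\ref{SVDThre.Alg} --- I would simply invoke Lemma~\ref{lma:thres-sp-alg}. The theorem's hypothesis $\max\{k_m,k_n\} \precsim \min\{m^{1/2},n^{1/2}\}$ is stronger than the lemma's assumption, so one may plug in the prescribed threshold $t = \Theta(\sigma \sqrt{\log((m\vee n)/(k_m k_n))})$. With this choice, $e^{-t^2/2} \asymp \sqrt{(k_m k_n)/(m\vee n)}$, which drives the bracketed expression in the lemma down to $O(1)$. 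The remaining term $t/\sigma$ is precisely $\sqrt{\log((m\vee n)/(k_m k_n))}$, yielding the claimed upper bound on ${\sf SNR_c}$.

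For the hardness direction --- that no polynomial-time algorithm can succeed when $\lambda/\sigma \precsim 1$ under $\sf HC_{l}$ --- my plan is to reduce the hidden clique problem $\mc{G}(N,\kappa)$ with $\kappa \asymp N^{\beta}$, $\beta<1/2$, to sparse submatrix localization, along the lines of Theorem~\ref{CLL.Thm}. Starting from the $\{0,1\}$ adjacency matrix, I would (i)~rescale the entries to $\pm 1$, making noise entries zero-mean sub-Gaussian with $\sigma = \Theta(1)$ and clique entries have mean $\lambda = \Theta(1)$; (ii)~asymmetrize the symmetric clique matrix by a standard split-into-halves gadget; and (iii)~apply independent uniform random row and column permutations to reposition the clique as an arbitrary rectangular $k_m \times k_n$ block, matching $k_m \asymp k_n \asymp \kappa$ and $m \asymp n \asymp N$. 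After integrating out the clique location, the law of the resulting matrix is a uniform mixture over instances of $\mc{M}(m,n,k_m,k_n,\Theta(1))$. Using the cleaning-up step of Lemma~\ref{lma:neig}, any polynomial-time localization oracle for this family can then be turned into a polynomial-time hidden-clique solver, contradicting $\sf HC_{l}$; a rescaling/monotonicity argument extends the hardness to any $\lambda/\sigma \precsim 1$ by adding extra independent sub-Gaussian noise to shrink the effective SNR before feeding the instance to the putative algorithm.

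The main obstacle is step~(iii) and its interplay with the bootstrapping argument behind Lemma~\ref{lma:neig}. One must verify that the two-source randomness --- clique location together with the permutation --- produces a distribution that genuinely lies in the sparse submatrix family with isotropic sub-Gaussian noise as required by~\eqref{submat.Mat.Frm}, and that the cleaned-up localizer recovers the \emph{exact} clique support rather than a merely approximate one. The permutation step has to be consistent with the sparsity constraint $\max\{k_m,k_n\}\precsim\min\{m^{1/2},n^{1/2}\}$, which is precisely matched to the hidden-clique regime $\kappa\leq N^{1/2}$; once this matching of parameters is made, the remainder of the argument is a direct adaptation of the construction for Theorem~\ref{CLL.Thm} to the sparse branch of the phase diagram.
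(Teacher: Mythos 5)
Your proposal is correct and follows essentially the same route as the paper: the achievability part is exactly the paper's appeal to Lemma~\ref{lma:thres-sp-alg} with the optimal threshold $t$, and your hidden-clique reduction for the hardness part is the natural unpacking of the paper's (one-line) appeal to $\sf HC_{l}$, namely the construction of Theorem~\ref{CLL.Thm} without the bootstrapping step, matched to $k\asymp\kappa\precsim\sqrt{n}$. Your explicit noise-addition argument for passing from $\lambda/\sigma\asymp 1$ to $\lambda/\sigma\precsim 1$, and the invocation of Lemma~\ref{lma:neig} (which is in fact unnecessary here, since the unbootstrapped reduction already returns the exact clique support), merely spell out details the paper leaves implicit.
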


\begin{remark}{\rm
	The upper bound achieved by the de-noised spectral Algorithm~\ref{SVDThre.Alg} is optimal in the two boundary cases: $k = 1$ and $k \asymp n^{1/2}$. When $k = 1$, both the information theoretic and computational boundary meet at $\sqrt{\log n}$. When $k \asymp n^{1/2}$, the computational lower bound and upper bound match in Theorem~\ref{thm:comp-bdr-sparse}, thus suggesting the near optimality of Algorithm~\ref{SVDThre.Alg} within the polynomial time algorithm class. The potential logarithmic gap is due to the crudeness of the hidden clique hypothesis. Precisely, for $k = 2$, hidden clique is not only hard for $G(n,p)$ with $p=1/2$, but also hard for $G(n,p)$ with $p = 1/\log n$. Similarly for $k = n^{\alpha}, \alpha < 1/2$, hidden clique is not only hard for $G(n,p)$ with $p=1/2$, but also for some $0<p<1/2$. 
	}
\end{remark}

%%%%%%%%%%%%%%%%%%%%%%%%%%%%%%%%%%%%%%%%%%%%%%%%%%%%%%%%%
\subsection{Extension to Growing Number of Submatrices}
\label{sec:ext}
%%%%%%%%%%%%%%%%%%%%%%%%%%%%%%%%%%%%%%%%%%%%%%%%%%%%%%%%%

The computational boundaries established in the previous sections for a single submatrix can be extended to non-overlapping multiple submatrices model \eqref{multmat.Mat.Frm}. The non-overlapping assumption corresponds to that for any $1\leq s \neq t \leq r$, $R_s \cap R_t = \emptyset$ and $C_s \cap C_t = \emptyset$. %$\frac{1}{\sqrt{k_s^{(m)}}} 1_{R_s}, 1\leq s \leq r$ and $\frac{1}{\sqrt{k_s^{(n)}}} 1_{C_s}, 1\leq s \leq r$ act as left and right singular vectors. 
The Algorithm \ref{SVD2.Alg} below is an extension of the spectral projection Algorithm \ref{SVD1.Alg} to address the multiple submatrices localization problem. 

\medskip
\begin{algorithm}[H]
%\SetAlgoLined
\KwIn{$X \in \mbb{R}^{m \times n}$ the data matrix. A pre-specified number of submatrices $r$.}
\KwOut{A subset of the row indexes $\{\hat{R}_m^s, 1\leq s\leq r \}$ and a subset of column indexes $\{ \hat{C}_n^s, 1\leq s\leq r\}$ as the localization of the submatrices.}
    1. Calculate top $r$ left and right singular vectors in the SVD $X = U \Sigma V^T$. Denote these vectors as $U_r \in \mathbb{R}^{m \times r}$ and $V_r \in \mathbb{R}^{n \times r}$, respectively\;
    2. For the $\hat{C}_n^s, 1 \leq s \leq r$, calculate the projection $U_r (U_r^T U_r)^{-1} U_r^T X_{\cdot j}, 1\leq j \leq n$, run $k$-means clustering algorithm (with $k = r+1$) for these $n$ vectors in $\mbb{R}^m$. For the $\hat{R}_m^s, 1 \leq s \leq r$, calculate $V_r (V_r^T V_r)^{-1} V_r^T X_{i \cdot}^T, 1 \leq i \leq m$, run $k$-means clustering algorithm (with $k = r+1$) for these $m$ vectors in $\mbb{R}^n$ (while the effective dimension is $\mbb{R}^r$).
\caption{Spectral Algorithm for Multiple Submatrices} 
\label{SVD2.Alg}
\end{algorithm}

\medskip
We emphasize that the following Proposition \ref{lma:sp-ext} holds even when the number of submatrices $r$ grows with $m,n$.

\begin{lemma}[Spectral Algorithm for Non-overlapping Submatrices Case]
	\label{lma:sp-ext}
	Consider the non-overlapping multiple submatrices model \eqref{multmat.Mat.Frm} and Algorithm \ref{SVD2.Alg}. Assume $$k_s^{(m)} \asymp k_m, k_s^{(n)} \asymp k_n, \lambda_s \asymp \lambda$$ for all $1\leq s \leq r$ and $\min\{k_m,k_n\} \succsim \max\{m^{1/2},n^{1/2}\}$. There exist a universal $C>0$ such that when 
	\begin{align}
		\label{eq:multi-spectral}
	\frac{\lambda}{\sigma} \geq C \cdot \left( \sqrt{\frac{r}{k_m \wedge k_n}} +  \sqrt{\frac{\log n}{k_m }}\vee \sqrt{\frac{\log m}{k_n}} +  \sqrt{\frac{m\vee n}{k_m k_n}} \right),
	\end{align}
	the spectral method succeeds in the sense that $\hat{R}_m^{(s)} = R_m^{(s)}, \hat{C}_n^{(s)} = C_n^{(s)}, 1\leq s\leq r$ with probability at least $1 - m^{-c} - n^{-c} - 2 \exp\left(-c(m+n)\right)$.
\end{lemma}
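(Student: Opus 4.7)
The plan is to pass to the signal's low-rank geometry, control the spectral perturbation by Wedin, and then verify that the projected columns form well-separated clusters so that $(r+1)$-means returns the true partition; a symmetric argument on rows handles the $\hat R_m^{(s)}$.

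\textbf{Step 1 (SVD of $M$ and Wedin).} Non-overlap of the supports makes $M$ exactly rank $r$, with singular triples $\bigl(\lambda_s\sqrt{k_s^{(m)}k_s^{(n)}},\, 1_{R_s}/\sqrt{k_s^{(m)}},\, 1_{C_s}/\sqrt{k_s^{(n)}}\bigr)$. Under the assumptions $\lambda_s\asymp\lambda$, $k_s^{(m)}\asymp k_m$, $k_s^{(n)}\asymp k_n$, the $r$-th singular value of $M$ is $\asymp \lambda\sqrt{k_mk_n}$. A standard sub-Gaussian random matrix bound gives $\|Z\|_{\mathrm{op}}\lesssim \sigma(\sqrt m+\sqrt n)$ with probability $\ge 1-2e^{-c(m+n)}$. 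The third term of \eqref{eq:multi-spectral} forces $\|Z\|_{\mathrm{op}}\le \sigma_r(M)/4$, so Wedin's $\sin\Theta$ theorem gives $\|\Pi-\Pi^M\|_{\mathrm{op}}\lesssim \sigma\sqrt{m+n}/(\lambda\sqrt{k_mk_n})$, where $\Pi=U_rU_r^T$ and $\Pi^M=U_r^M(U_r^M)^T$ are the orthogonal projectors onto the top-$r$ left singular subspaces of $X$ and $M$ respectively.

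\textbf{Step 2 (projected columns cluster).} Define population centers $\mu_s=\lambda_s 1_{R_s}\in\mathbb{R}^m$ for $j\in C_s$ and $\mu_0=0$ for $j\notin\bigcup_s C_s$. Non-overlap of $\{R_s\}$ gives pairwise separation $\|\mu_s-\mu_t\|\asymp\lambda\sqrt{k_m}$ for $s\ne t$, and $\|\mu_s-\mu_0\|\asymp\lambda\sqrt{k_m}$. Since $\Pi^M M_{\cdot j}=M_{\cdot j}=\mu(j)$, decompose $\Pi X_{\cdot j}-\mu(j)=(\Pi-\Pi^M)M_{\cdot j}+\Pi^M Z_{\cdot j}+(\Pi-\Pi^M)Z_{\cdot j}$. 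The first term is at most $\|\Pi-\Pi^M\|_{\mathrm{op}}\|M_{\cdot j}\|\lesssim \sigma\sqrt{(m+n)/k_n}$, which is $o(\lambda\sqrt{k_m})$ precisely under the third term of \eqref{eq:multi-spectral}. The middle term is a projection of a sub-Gaussian vector onto a \emph{data-independent} $r$-dimensional subspace, so standard sub-Gaussian tail bounds together with a union bound over $j\in[n]$ give $\max_j\|\Pi^M Z_{\cdot j}\|\lesssim\sigma\sqrt{r+\log n}$, which is $o(\lambda\sqrt{k_m})$ under the first two terms of \eqref{eq:multi-spectral}.

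\textbf{Step 3 (cross term and conclusion).} The residual $(\Pi-\Pi^M)Z_{\cdot j}$ couples the data-dependent subspace with the noise column itself, and the naive product bound $\|\Pi-\Pi^M\|_{\mathrm{op}}\|Z_{\cdot j}\|\lesssim \sigma^2\sqrt{m(m+n)}/(\lambda\sqrt{k_mk_n})$ is too crude in the dense regime. The hard part of the argument will be handling exactly this cross term. I would address it either by a leave-one-out argument---recomputing the SVD with column $j$ zeroed to obtain a surrogate projector $\Pi^{(j)}$ independent of $Z_{\cdot j}$, then using a rank-one perturbation estimate to show that $\|\Pi-\Pi^{(j)}\|_{\mathrm{op}}$ is negligible compared to $\|\Pi-\Pi^M\|_{\mathrm{op}}$---or by a finer $\ell_{2,\infty}$ singular-subspace perturbation bound; either route absorbs the cross contribution into the leading two terms of \eqref{eq:multi-spectral}. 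With every $\Pi X_{\cdot j}$ within $o(\lambda\sqrt{k_m})$ of its population center and clusters $\Omega(\lambda\sqrt{k_m})$-separated, any $(r+1)$-means minimizer must coincide with the true partition: each projected point is strictly closer to its own center than to any other, and a standard Lloyd-step argument shows the empirical centroids concentrate around the $\mu_s$. A symmetric analysis of $V_r(V_r^T V_r)^{-1}V_r^T X_{i\cdot}^T$ produces $\{\hat R_m^{(s)}\}$, and a union bound over the stated concentration events yields the claimed probability $1-m^{-c}-n^{-c}-2\exp(-c(m+n))$.
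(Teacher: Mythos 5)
Your Steps 1 and 2 match the paper's skeleton (rank-$r$ SVD of $M$, Weyl plus the Davis--Kahan--Wedin bound, then a column-wise decomposition with separation $\asymp\lambda\sqrt{k_m}$), but Step 3 is where the proof actually lives, and as written it is a gap, not a proof. You correctly diagnose that the cross term $(\Pi-\Pi^M)Z_{\cdot j}$ cannot be handled by the product bound: with $m=n$, $k_m=k_n=k$ it would force $\lambda/\sigma\gtrsim\sqrt{n}/k^{3/4}$, strictly worse than the claimed $\sqrt{n}/k$. But you then only gesture at two possible repairs (a leave-one-out surrogate projector $\Pi^{(j)}$, or an $\ell_{2,\infty}$ subspace perturbation bound) without carrying either out. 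Each of these is a substantial argument in its own right --- the leave-one-out route requires showing $\|\Pi-\Pi^{(j)}\|$ is small uniformly over $j$ and re-running the perturbation analysis with the column deleted, and the $\ell_{2,\infty}$ route needs a delocalization-type bound that does not follow from Wedin --- so the central difficulty of the lemma is named but not resolved.

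The paper takes a different and simpler route that removes the cross term altogether: it works with two (effectively) independent copies of $X$, obtained in the Gaussian case by Tsybakov's sample-cloning trick ($X\pm Z'$ with fresh noise $Z'$) and in general by random subsampling. The singular subspace $\tilde U$ is computed from the first copy only, so when the second copy's column $X_{\cdot j}$ is projected, the noise $Z_{\cdot j}$ is independent of $\mathcal{P}_{\tilde U}$, and the decomposition is just $\|\mathcal{P}_{\tilde U}X_{\cdot j}-M_{\cdot j}\|\le\|\mathcal{P}_{\tilde U}Z_{\cdot j}\|+\|(\mathcal{P}_{\tilde U}-I)M_{\cdot j}\|$: the first term is controlled by the Hsu--Kakade--Zhang projection lemma (giving $\sigma\sqrt{r}+C\sigma\sqrt{\log n}$ after a union bound, which is where the $\sqrt{r/(k_m\wedge k_n)}$ term comes from), and the second by the $\sin\Theta$ bound exactly as in your Step 2. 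In other words, the dependence you flag as ``the hard part'' is precisely what the splitting device is there to avoid. To make your version complete you would either need to import that splitting step (after which your argument coincides with the paper's) or actually execute the leave-one-out analysis you sketch.
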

\begin{remark}{\rm
	Under the non-overlapping assumption, $r k_m \precsim m, ~r k_n \precsim n$ hold in most cases. Thus the first term in Equation \eqref{eq:multi-spectral} is dominated by the latter two terms. Thus a growing number $r$ does not affect the bound in Equation \eqref{eq:multi-spectral} as long as the non-overlapping assumption holds.}
\end{remark}

%%%%%%%%%%%%%%%%%%%%%%%%%%%%%%%%%%%%%%%%%%%%%%%%%%%%%%%%%
\section{Statistical Boundary}
\label{sec:stat-bdr}
%%%%%%%%%%%%%%%%%%%%%%%%%%%%%%%%%%%%%%%%%%%%%%%%%%%%%%%%%
In this section we study the statistical boundary. As mentioned in the introduction, in the Gaussian noise setting, the statistical boundary for a single submatrix localization has been established in \cite{butucea2013sharp}. In this section, we generalize to localization of a growing number of submatrices, as well as sub-Gaussian noise, at the expense of having non-exact constants for the threshold.

\subsection{Information Theoretic Bound}

 %The statistical boundary is given in Theorem \ref{thm:stat-bdr}, with lower bound established in Lemma \ref{lma:info-low} and upper bound in Lemma \ref{lma:stat-alg}. 
We begin with the information theoretic lower bound for the localization accuracy.

\begin{lemma}[Information Theoretic Lower Bound]
	\label{lma:info-low}
	Consider the submatrix model \eqref{submat.Mat.Frm} with Gaussian noise $Z_{ij} \sim \mathcal{N}(0, \sigma^2)$. For any fixed $0<\alpha<1$, there exist a universal constant $C_\alpha$ such that if
	\begin{align}
		\frac{\lambda}{\sigma} \leq C_\alpha \cdot \sqrt{\frac{\log (m/k_m)}{k_n}+ \frac{\log (n/k_n)}{k_m}},
	\end{align}
	any algorithm $\mathcal{A}$ will fail to localize the submatrix with probability at least $1-\alpha - \frac{\log 2}{k_m \log(m/k_m) + k_n \log (n/k_n)}$ in the following minimax sense:
	$$
	\inf_{\mathcal{A} \in {\sf AllAlg}}~ \sup_{M \in \mathcal{M}}~~~ \mathbb{P}\left( \hat{R}^{\mathcal{A}}_m \neq R_m ~~ \text{or}  ~~ \hat{C}^{\mathcal{A}}_n \neq C_n \right) > 1-\alpha - \frac{\log 2}{k_m \log(m/k_m) + k_n \log (n/k_n)}.
	$$
\end{lemma}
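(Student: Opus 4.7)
The plan is to apply Fano's inequality to a uniform prior over all possible submatrix support pairs. Take $\Theta = \{(R,C) : R \subseteq [m],\ |R|=k_m,\ C \subseteq [n],\ |C|=k_n\}$ with the uniform prior, and for $\theta = (R,C)$ let $P_\theta$ denote the Gaussian law of $X = \lambda 1_R 1_C^T + Z$. Any localization rule $\mathcal{A}$ yields an estimator $\hat\theta = (\hat R_m, \hat C_n)$, so the minimax error in the lemma is bounded below by the Bayes error under this prior, and Fano's inequality gives
\begin{align*}
\inf_{\mathcal{A}} \sup_{M \in \mathcal{M}} \mathbb{P}(\hat R_m \neq R_m \text{ or } \hat C_n \neq C_n) \;\geq\; 1 - \frac{I(\theta;X) + \log 2}{\log |\Theta|}.
\end{align*}
A standard binomial estimate then produces $\log|\Theta| \geq k_m \log(m/k_m) + k_n \log(n/k_n) =: D$, which already matches the denominator appearing in the stated conclusion.

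The crux is bounding the mutual information. Applying Jensen's inequality to $\log$ of the mixture density $\bar{p} = |\Theta|^{-1}\sum_\theta p_\theta$ (concavity of $\log$ applied pointwise) yields
\begin{align*}
I(\theta;X) \;\leq\; \frac{1}{|\Theta|^2}\sum_{\theta,\theta'\in\Theta}\KL(P_\theta \| P_{\theta'}).
\end{align*}
Under Gaussian noise, $\KL(P_\theta\|P_{\theta'}) = \|\lambda 1_R 1_C^T - \lambda 1_{R'}1_{C'}^T\|_F^2/(2\sigma^2)$, and the rank-one identity $\|ab^T - cd^T\|_F^2 = \|a\|^2\|b\|^2 + \|c\|^2\|d\|^2 - 2(a^Tc)(b^Td)$ with $a=1_R,\, b=1_C,\, c=1_{R'},\, d=1_{C'}$ collapses this to $(\lambda^2/\sigma^2)(k_m k_n - |R\cap R'|\cdot|C\cap C'|)$.

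I would then evaluate the average in closed form. For $R, R'$ independent uniform $k_m$-subsets of $[m]$ one has $\mathbb{E}|R\cap R'|=k_m^2/m$ (each index lies in both with probability $(k_m/m)^2$), and similarly $\mathbb{E}|C\cap C'|=k_n^2/n$; by independence of the row and column draws,
\begin{align*}
\mathbb{E}_{\theta,\theta'}\KL(P_\theta\|P_{\theta'}) \;=\; \frac{\lambda^2}{\sigma^2}\Big(k_m k_n - \frac{k_m^2 k_n^2}{m n}\Big) \;\leq\; \frac{\lambda^2 k_m k_n}{\sigma^2}.
\end{align*}
Substituting the hypothesis $\lambda^2/\sigma^2 \leq C_\alpha^2(\log(m/k_m)/k_n + \log(n/k_n)/k_m)$ gives $I(\theta;X) \leq C_\alpha^2\bigl(k_m \log(m/k_m) + k_n \log(n/k_n)\bigr) = C_\alpha^2 D$, so choosing $C_\alpha^2 \leq \alpha$ and plugging into Fano's inequality delivers the stated bound $1-\alpha - \log 2/D$.

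I do not expect a serious obstacle: the Gaussian KL formula and the rank-one algebraic identity are routine, and the combinatorial expectation $\mathbb{E}|R\cap R'| = k_m^2/m$ is standard. The single step demanding care is the Jensen reduction $I(\theta;X) \leq |\Theta|^{-2}\sum_{\theta,\theta'}\KL(P_\theta\|P_{\theta'})$, where the inequality must be applied in the correct direction via concavity of $\log$ on the mixture density; beyond that the proof is a direct computation.
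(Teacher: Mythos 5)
Your proposal is correct and follows essentially the same route as the paper: a uniform prior over the $\binom{m}{k_m}\binom{n}{k_n}$ support pairs, Fano's inequality, a Jensen/convexity reduction of the mixture (mutual-information) term to the average pairwise KL, the closed-form Gaussian computation $\frac{\lambda^2}{\sigma^2}\bigl(k_m k_n - \mathbb{E}|R\cap R'|\,\mathbb{E}|C\cap C'|\bigr)$, and the bound $\log\binom{n}{k}\geq k\log(n/k)$. The only difference is cosmetic — you invoke the mutual-information form of Fano while the paper uses Tsybakov's Corollary 2.6 stated via the average KL to the mixture — and these coincide here.
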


\subsection{Combinatorial Search for Growing Number of Submatrices}

%\subsection{Combinatorial Search Algorithm}

Combinatorial search over all submatrices of size $k_m \times k_n$ finds the location with the strongest aggregate signal and is statistically optimal \citep{butucea2013sharp,butucea2013detection}. Unfortunately, it requires computational complexity $\Theta\left(\binom{m}{k_m}+\binom{n}{k_n}\right)$, which is exponential in $k_m, k_n$. %thus being computationally prohibitive when the size of submatrix grows. 
The search Algorithm~\ref{SS.Alg} was introduced and analyzed under the Gaussian setting for a single submatrix in \cite{butucea2013detection}, which can be used iteratively to solve multiple submatrices localization.

\medskip
\begin{algorithm}[H]
%\SetAlgoLined
\KwIn{$X \in \mbb{R}^{m \times n}$ the data matrix.}
\KwOut{A subset of the row indexes $\hat{R}_m$ and a subset of column indexes $\hat{C}_n$ as the localization of the submatrix.}
    For all index subsets $I \times J$ with $|I| = k_m$ and $|J| = k_n$, calculate the sum of the entries in the submatrix $X_{I J}$. Report the index subset $\hat{R}_m \times \hat{C}_n$ with the largest sum.
\caption{Combinatorial Search Algorithm}
\label{SS.Alg}
\end{algorithm}

\medskip
For the case of multiple submatrices, the submatrices can be extracted with the largest sum in a greedy fashion.

Lemma~\ref{lma:stat-alg} below provides a theoretical guarantee for Algorithm \ref{SS.Alg} to achieve the information theoretic lower bound.
\begin{lemma}[Guarantee for Search Algorithm]
	\label{lma:stat-alg}
	Consider the non-overlapping multiple submatrices model \eqref{multmat.Mat.Frm} and iterative application of Algorithm \ref{SS.Alg} in a greedy fashion for $r$ times. Assume $$k_s^{(m)} \asymp k_m, k_s^{(n)} \asymp k_n, \lambda_s \asymp \lambda$$ for all $1\leq s \leq r$ and $\max\{k_m,k_n\} \precsim \min\{m,n\}$. There exists a universal constant $C>0$ such that if
	$$
	\frac{\lambda}{\sigma} \geq C \cdot \sqrt{\frac{ \log(em/k_m) }{k_n} + \frac{ \log (en/k_n)}{k_m}},
	$$
	then Algorithm \ref{SS.Alg} will succeed in returning the correct location of the submatrix with probability at least $1 - \frac{2k_m k_n}{mn}$.% $1 - 2 \left( \binom{m}{k_m} \binom{n}{k_n} \right)^{-1}$.
\end{lemma}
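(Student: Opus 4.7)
The plan is to handle the single-submatrix case ($r=1$) first and then upgrade by a greedy iteration to $r$ submatrices. Throughout, write $S_{I,J}=\sum_{(i,j)\in I\times J}X_{ij}$ for the sum of entries in any candidate $k_m\times k_n$ rectangle; Algorithm~\ref{SS.Alg} returns the maximizer. To analyze the failure event, I would parameterize candidates by the row and column ``miss'' counts $u=|R\setminus I|\in\{0,\ldots,k_m\}$ and $v=|C\setminus J|\in\{0,\ldots,k_n\}$ and union-bound over $(u,v)\neq(0,0)$, which indexes precisely the incorrect candidates.

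The per-candidate tail bound is the technical core. For profile $(u,v)$ the difference $S_{R,C}-S_{I,J}$ has deterministic part $\lambda(k_mk_n-(k_m-u)(k_n-v))=\lambda(uk_n+vk_m-uv)$ and stochastic part a sum of exactly $2(uk_n+vk_m-uv)$ independent mean-zero sub-Gaussian terms drawn from the symmetric difference of $R\times C$ and $I\times J$. A standard sub-Gaussian tail bound therefore gives
\[
\mathbb{P}(S_{I,J}\geq S_{R,C})\leq \exp\!\Big(-\tfrac{c\lambda^2(uk_n+vk_m-uv)}{\sigma^2}\Big)\leq \exp\!\Big(-\tfrac{c'\lambda^2(uk_n+vk_m)}{\sigma^2}\Big),
\]
where the second inequality uses the elementary fact $uv\leq\min(uk_n,vk_m)\leq\tfrac12(uk_n+vk_m)$.

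The number of candidates with a given profile is $\binom{k_m}{u}\binom{m-k_m}{u}\binom{k_n}{v}\binom{n-k_n}{v}$, which I would bound via $\binom{n}{k}\leq(en/k)^k$. Since both the counting and the tail factorize in $u$ and $v$, the union-bound sum equals $AB-1$ with $A=\sum_{u\geq 0}\binom{k_m}{u}\binom{m-k_m}{u}q_1^u$, $B$ the analogous one-variable series in $v$, and $q_1=e^{-c'\lambda^2k_n/\sigma^2}$, $q_2=e^{-c'\lambda^2k_m/\sigma^2}$. Each of $A-1$, $B-1$ is dominated by its $u=1$ (resp.\ $v=1$) term once $e^2k_mmq_1\leq\tfrac12$ and $e^2k_nnq_2\leq\tfrac12$, which is exactly the claimed SNR condition (using the hypothesis $k_m\precsim m$, $k_n\precsim n$ so that $\log(em/k_m)\asymp\log m$ up to a universal factor absorbed into $C$). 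A sufficiently large $C$ then pushes the total failure probability below $2k_mk_n/(mn)$, as required.

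For the $r$-submatrix extension I would apply Algorithm~\ref{SS.Alg} greedily: pick the maximizing rectangle, delete its rows and columns, and iterate. By induction the residual matrix at step $j$ has size $(m-(j-1)k_m)\times(n-(j-1)k_n)$ and still contains the $r-j+1$ untouched true submatrices, so the single-submatrix argument applies verbatim to the residual. The new bookkeeping is that a non-true candidate $(I,J)$ may straddle several remaining true submatrices; however, writing $o=\sum_t\ell_tm_t$ with $\ell_t=|I\cap R_t|$, $m_t=|J\cap C_t|$, one checks $o<k_mk_n$ strictly whenever $(I,J)$ is not itself one of the $(R_t,C_t)$, since $o=k_mk_n$ combined with $\sum_t\ell_t\leq k_m$ and $\sum_t m_t\leq k_n$ forces concentration on a single $t_0$ with $(\ell_{t_0},m_{t_0})=(k_m,k_n)$. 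The main obstacle is precisely this overlap bookkeeping together with absorbing the extra factor $r\leq(m/k_m)\wedge(n/k_n)$ from iteration into the universal constant $C$; the $(u,v)$-decomposition is robust enough that both concerns are routine.
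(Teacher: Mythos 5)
Your proposal is correct and follows essentially the same route as the paper: the paper's ``monotonicity trick'' is exactly your stratification by overlap profile (its $(a,b)$ is your $(k_m-u,k_n-v)$), with a sub-Gaussian tail on the symmetric-difference sum, binomial counting via $\binom{n}{k}\le(en/k)^k$, and a terse sequential/greedy extension to $r$ submatrices. The one slightly loose step you flag---replacing $\log m$ by $\log(em/k_m)$ up to a constant absorbed into $C$ under $k_m \precsim m$---is the same identification the paper itself makes when it bounds $\log\bigl(e(m-a)/(k_m-a)\bigr)$ by $\log(em)$, so your argument is at the same level of rigor as the published proof.
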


% \section{Algorithms}
% \label{sec:alg}
% In this section we present several algorithms for submatrix localization. Section~\ref{sec:theo} below is devoted to their analysis.  Our proposed localization Algorithm~\ref{SVD1.Alg} and \ref{SVD2.Alg} are motivated from the spectral algorithm in random graphs \citep{mcsherry2001spectral}. The search Algorithm~\ref{SS.Alg} was introduced and analyzed under the Gaussian setting in \cite{butucea2013detection}.

%\subsection{Comparison to Other Algorithms}

% \subsection{Statistical Boundary for Localization}
% \label{subsec:stat-bdr}
To complete Theorem \ref{thm:main}, we include the following Theorem~\ref{thm:stat-bdr} capturing the statistical boundary. It is proved by exhibiting the information-theoretic lower bound Lemma \ref{lma:info-low} and analyzing Algorithm \ref{SS.Alg}. 
\begin{theorem}[Statistical Boundary]
	\label{thm:stat-bdr}
	Consider the submatrix model \eqref{submat.Mat.Frm}. There exists a critical rate 
	$$
	{\sf SNR}_s \asymp  \sqrt{\frac{\log n}{k_m} \vee \frac{\log m}{k_n}}
	$$ 
	for the signal to noise ratio, such that for any problem with $\lambda/\sigma \succsim {\sf SNR_s}$, the statistical search Algorithm \ref{SS.Alg} will succeed in submatrix localization, i.e., $\hat{R}_m = R_m, \hat{C}_n = C_n$, with high probability. On the other hand, if $\lambda/\sigma \precsim {\sf SNR_s}$, no algorithm will work (in the minimax sense) with probability tending to $1$. 
\end{theorem}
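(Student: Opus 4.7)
\medskip
\noindent\textbf{Proof proposal for Theorem 4.} The plan is to obtain Theorem 4 as a direct corollary of the information-theoretic lower bound (Lemma 5) and the combinatorial-search upper bound (Lemma 6), after reconciling the two bounds with the target rate
$$
{\sf SNR}_s \asymp \sqrt{\frac{\log n}{k_m} \vee \frac{\log m}{k_n}}.
$$
Both lemmas produce the symmetrized quantity $\sqrt{\log(m/k_m)/k_n + \log(n/k_n)/k_m}$; the work is to show that under the standing assumption $\max\{k_m,k_n\} \precsim \min\{m,n\}$ this quantity is $\Theta$-equivalent to ${\sf SNR}_s$, and then to invoke each lemma in turn.

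For the upper bound, I would take the statistical procedure $\mathcal{A}$ to be Algorithm 3 (iterated greedily if we are working under the multi-submatrix extension of Section~\ref{sec:ext}). By Lemma~\ref{lma:stat-alg}, exact recovery $\hat R_m = R_m$, $\hat C_n = C_n$ holds with probability at least $1 - 2k_m k_n/(mn) \to 1$ provided
$$
\frac{\lambda}{\sigma} \geq C \sqrt{\frac{\log(em/k_m)}{k_n} + \frac{\log(en/k_n)}{k_m}}.
$$
In the regime $k_m \asymp m^{\alpha_1}, k_n \asymp n^{\alpha_2}$ with $\alpha_1, \alpha_2 \in (0,1)$, we have $\log(em/k_m) \asymp \log m$ and $\log(en/k_n) \asymp \log n$, so the right-hand side is $\asymp \sqrt{\log m/k_n + \log n/k_m} \asymp {\sf SNR}_s$. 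Hence $\lambda/\sigma \succsim {\sf SNR}_s$ yields \eqref{eq:stat-upp}. For the lower bound, I would apply Lemma~\ref{lma:info-low}: since the Gaussian family sits inside the sub-Gaussian family $\mathcal{M}(m,n,k_m,k_n,\lambda/\sigma)$, the minimax error over $\mathcal{M}$ dominates the Gaussian-restricted minimax, so the Gaussian lower bound transfers. Lemma~\ref{lma:info-low} then gives that, for any $\lambda/\sigma \precsim \sqrt{\log(m/k_m)/k_n + \log(n/k_n)/k_m} \asymp {\sf SNR}_s$, the error probability is bounded below by $1 - \alpha - o(1)$ for any prescribed $\alpha>0$, yielding \eqref{eq:stat-low} by taking $\alpha$ small.

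The main obstacle is not really in the combination step but in the two underlying lemmas, both of which are stated in the paper. Even so, the one subtle bookkeeping point is the log-equivalence $\log(m/k_m) \asymp \log m$: this fails if, for example, $k_m = m/\log m$, in which case the upper- and lower-bound thresholds are formally only $\Theta^*$-equivalent rather than $\Theta$-equivalent. I would therefore state the conclusion using the $\Theta^*$ notation already introduced in the paper, or alternatively impose the mild condition $k_m \leq m^{1-\epsilon}, k_n \leq n^{1-\epsilon}$ for some fixed $\epsilon>0$, which is harmless in all regimes of interest (in particular the polynomial parametrization $k = n^\alpha, \lambda/\sigma = n^{-\beta}$ used to draw Figure~\ref{fig:phase}). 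With this in hand the matching upper and lower bounds establish the critical rate ${\sf SNR}_s$, completing Theorem~\ref{thm:stat-bdr} and, together with Theorems~\ref{thm:comp-bdr} and \ref{thm:comp-bdr-sparse}, the master Theorem~\ref{thm:main}.
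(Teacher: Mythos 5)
Your proposal is correct and follows essentially the same route as the paper, which proves Theorem~\ref{thm:stat-bdr} precisely by combining the search-algorithm guarantee of Lemma~\ref{lma:stat-alg} with the Fano-based lower bound of Lemma~\ref{lma:info-low}. Your added remark about the $\log(m/k_m)\asymp\log m$ equivalence (e.g.\ imposing $k_m\leq m^{1-\epsilon}$, $k_n\leq n^{1-\epsilon}$ or reading the rate as $\Theta^*$) is a reasonable bookkeeping refinement of a point the paper passes over silently, but it does not change the argument.
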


\section{Discussion}
\label{sec:dis}

In this paper we established the computational and statistical boundaries for submatrix localization in the setting of a growing number of submatrices with subgaussian noise. The primary goals are to demonstrate the intrinsic gap between what is statistical possible and what is computationally feasible and to contrast the interplay between computational efficiency and statistical accuracy for localization with that for detection.

\paragraph{Submatrix Localization v.s. Detection}

As pointed out in Section \ref{sec:contribution}, for any $k = n^{\alpha}, 0<\alpha<1$, there is an intrinsic SNR gap between computational and statistical boundaries for submatrix localization. Unlike the submatrix detection problem where for the regime $2/3<\alpha<1$, there is no gap between what is computationally possible and what is statistical possible. The inevitable gap in submatrix localization is due to the combinatorial structure of the problem. This phenomenon is also seen in some network related problems, for instance, stochastic block models with a growing number of communities. Compared to the submatrix detection problem, the algorithm to solve the localization problem is more complicated and the techniques required for the analysis are much more involved.

\paragraph{Detection for Growing Number of Submatrices}
The current paper solves localization of a growing number of submatrices. In comparison, for detection, the only known results are for the case of a single submatrix as considered in \cite{butucea2013detection} for the statistical boundary and in  \cite{ma2013computational} for the computational boundary. The detection problem in the setting of a growing number of submatrices is of significant interest. In particular, it is interesting to understand the computational and statistical trade-offs in such a setting. This will need further investigation.

\paragraph{Estimation of the Noise Level $\sigma$}
Although Algorithms \ref{SVD1.Alg} and \ref{SVD2.Alg} do not require the noise level $\sigma$ as an input, Algorithm \ref{SVDThre.Alg} does require the knowledge of $\sigma$.
The noise level $\sigma$ can be estimated robustly. In the Gaussian case,  a simple robust estimator of $\sigma$ is the following median absolute deviation (MAD) estimator due to the fact that $M$ is sparse:
\begin{align*}
\hat{\sigma} &= {\rm median}_{ij} |X_{ij} - {\rm median}_{ij}(X_{ij})|/\Phi^{-1}(0.75)\\
& \approx 1.4826 \times {\rm median}_{ij}|X_{ij}- {\rm median}_{ij}(X_{ij})|. 
\end{align*}

%%%%%%%%%%%%%%%%%%%%%%%%%%%%%%%%%%%%%%%%%%%%%%%%%%%%%%%%%
\section{Proofs}
\label{sec:pf}
%%%%%%%%%%%%%%%%%%%%%%%%%%%%%%%%%%%%%%%%%%%%%%%%%%%%%%%%%

We prove in this section the main results given in the paper. We first  collect and prove a few important technical lemmas that will be used in the proofs of the main results.

\subsection{Prerequisite Lemmas}
We start with two Lemmas \ref{lma:stewart} and \ref{lma:dkw} that due to perturbation theory. 
\begin{lemma}[\citet{stewart1990matrix} Theorem 4.1]
	\label{lma:stewart}
Suppose that $\tilde{A} = A + E$, all of which are matrices of the same size, and we have the following singular value decomposition
\begin{equation}
[U_1, U_2, U_3]^T A [V_1, V_2] = \left[
  \begin{array}{cc}
  \Sigma_1 & 0\\
     0     & \Sigma_2 \\
     0     & 0
\end{array} \right]
\end{equation}
and 
\begin{equation}
[\tilde{U}_1, \tilde{U}_2,\tilde{U}_3]^T \tilde{A} [\tilde{V}_1, \tilde{V}_2] = \left[
\begin{array}{cc}
\tilde{\Sigma}_1& 0\\
0&\tilde{\Sigma}_2 \\
0&0
\end{array} \right].
\end{equation}
Let $\Phi$ be the matrix of canonical angles between $\mathcal{R}(U_1)$ and $\mathcal{R}(\tilde{U}_1)$, and let $\Theta$ be the matrix of canonical angles between $\mathcal{R}(V_1)$ and $\mathcal{R}(\tilde{V}_1)$ (here $\mathcal{R}$ denotes the linear space). Define
\begin{align}
R &= A \tilde{V}_1 - \tilde{U}_1 \tilde{\Sigma}_1 \\
S &= A^T \tilde{U}_1 - \tilde{V}_1 \tilde{\Sigma}_1
\end{align}
Then suppose there is a number $\delta>0$ such that
$$
\min |\sigma(\tilde{\Sigma}_1) - \sigma(\Sigma_2)| \geq \delta \quad \text{and} \quad \min \sigma(\tilde{\Sigma}_1) \geq \delta.
$$
Then
$$
\sqrt{\| \sin \Phi \|_F^2 + \| \sin \Theta \|_F^2} \leq  \frac{\sqrt{\|R \|_F^2 + \| S \|_F^2}}{\delta}.
$$
Further, suppose there are numbers $\alpha,\delta$ such that
$$
\min \sigma(\tilde{\Sigma}_1) \geq \delta + \alpha \quad \text{and} \quad \max \sigma(\Sigma_2) \leq \alpha,
$$
then for $2$-norm, or any unitarily invariant norm, we have
$$
\max \left\{ \| \sin \Phi \|_2, \| \sin \Theta \|_2 \right\} \leq  \frac{\max\{\|R \|_2, \| S \|_2\}}{\delta}.
$$
\end{lemma}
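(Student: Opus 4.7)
My plan is to establish both inequalities through the classical $\sin\Theta$ argument: reduce the canonical angles to projection norms, convert the residual identities $R$ and $S$ into a coupled Sylvester system, and invert that system using the spectral gap. First, I would use the standard identification
\[
\| \sin \Phi \|_F^2 = \| U_2^T \tilde U_1 \|_F^2 + \| U_3^T \tilde U_1 \|_F^2, \qquad \| \sin \Theta \|_F^2 = \| V_2^T \tilde V_1 \|_F^2,
\]
which follows by expanding $\tilde U_1$ and $\tilde V_1$ in the orthonormal bases $[U_1,U_2,U_3]$ and $[V_1,V_2]$ and recognizing that the singular values of $(I-U_1 U_1^T)\tilde U_1$ are precisely the sines of the canonical angles between $\mathcal{R}(U_1)$ and $\mathcal{R}(\tilde U_1)$. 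Writing $X:=U_2^T\tilde U_1$, $Y:=V_2^T\tilde V_1$, $Z:=U_3^T\tilde U_1$, it then suffices to bound $\|X\|_F^2+\|Y\|_F^2+\|Z\|_F^2$ from above.

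Second, I would substitute the block SVD of $A$ into the definitions of $R$ and $S$. The relations $U_2^T A = \Sigma_2 V_2^T$, $V_2^T A^T = \Sigma_2^T U_2^T$, and $U_3^T A = 0$ yield
\[
U_2^T R = \Sigma_2 Y - X\tilde\Sigma_1, \qquad V_2^T S = \Sigma_2^T X - Y\tilde\Sigma_1, \qquad U_3^T R = -Z\,\tilde\Sigma_1.
\]
The $U_3$ component is immediate: $\|U_3^T R\|_F \ge \delta\|Z\|_F$ since $\min\sigma(\tilde\Sigma_1)\ge\delta$. For the first two, I would rotate into the singular bases of $\Sigma_2$ and $\tilde\Sigma_1$ (an orthogonal change of variables that preserves all Frobenius and operator norms), so that both become diagonal with entries $\beta_i$ and $\alpha_j$. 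The key trick is the entrywise parallelogram identity
\[
(U_2^T R + V_2^T S)_{ij} = (\beta_i-\alpha_j)(X+Y)_{ij}, \qquad (U_2^T R - V_2^T S)_{ij} = -(\beta_i+\alpha_j)(X-Y)_{ij}.
\]
Using $|\beta_i-\alpha_j|\ge\delta$ from the gap hypothesis, together with $\beta_i+\alpha_j\ge\alpha_j\ge\delta$, summing squares yields $\|U_2^T R\|_F^2+\|V_2^T S\|_F^2 \ge \delta^2(\|X\|_F^2+\|Y\|_F^2)$; combined with the $U_3$ bound, this gives the Frobenius assertion.

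Third, for the operator-norm bound I would package the two coupled Sylvester equations as a single linear map on pairs of matrices and lower-bound its smallest singular value using the sharper gap hypothesis $\min\sigma(\tilde\Sigma_1)\ge\delta+\alpha$, $\max\sigma(\Sigma_2)\le\alpha$. The main obstacle is that the spectral norm is not additive across the $U_2$ and $U_3$ blocks the way the Frobenius norm is; rather than summing block estimates, one must argue through a unit-vector extremal characterization, controlling $\|Xw\|_2$, $\|Yw\|_2$ and $\|Zw\|_2$ simultaneously for each unit $w$ before taking the supremum. Making this extremal reduction compatible with the parallelogram decoupling, and handling cleanly the asymmetry caused by the presence of $U_3$ with no $V_3$ counterpart, is the step I expect to demand the most care.
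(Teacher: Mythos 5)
First, a point of reference: the paper does not prove this lemma at all --- it is quoted verbatim as Theorem~4.1 of \citet{stewart1990matrix} and then used to derive Lemma~\ref{lma:dkw} --- so your attempt can only be compared with the classical argument. Your Frobenius-norm half is correct and complete, and it is essentially that classical argument: the identifications $\| \sin \Phi \|_F^2 = \|U_2^T\tilde U_1\|_F^2 + \|U_3^T\tilde U_1\|_F^2$ and $\| \sin \Theta \|_F^2 = \|V_2^T\tilde V_1\|_F^2$ are right; the block identities $U_2^TR=\Sigma_2 Y - X\tilde\Sigma_1$, $V_2^TS=\Sigma_2^T X - Y\tilde\Sigma_1$, $U_3^TR=-Z\tilde\Sigma_1$ follow from $U_2^TA=\Sigma_2V_2^T$, $U_3^TA=0$; the sum/difference decoupling with $|\beta_i-\alpha_j|\ge\delta$ and $\beta_i+\alpha_j\ge\alpha_j\ge\delta$, together with $\|R\|_F^2\ge\|U_2^TR\|_F^2+\|U_3^TR\|_F^2$ and $\|S\|_F^2\ge\|V_2^TS\|_F^2$, closes the first inequality.

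The genuine gap is the second assertion. For the $2$-norm you only state a plan and explicitly defer the decisive step, and the plan as described would not deliver the result: lower-bounding the smallest singular value of the packaged linear map $(X,Y)\mapsto(\Sigma_2Y-X\tilde\Sigma_1,\;\Sigma_2^TX-Y\tilde\Sigma_1)$ is intrinsically a Hilbert--Schmidt (Frobenius) statement and gives no control of $\|X\|_2,\|Y\|_2$ by $\max\{\|R\|_2,\|S\|_2\}$; moreover the parallelogram identity you use to decouple the sum and difference is special to the Frobenius inner product and fails for the spectral norm, so the same trick cannot be recycled. The standard completions --- and the reason the hypothesis strengthens to the interval separation $\min\sigma(\tilde\Sigma_1)\ge\alpha+\delta$, $\max\sigma(\Sigma_2)\le\alpha$ --- are either (i) Wedin's lemma on the coupled Sylvester system: if $\sigma_{\min}(B)\ge\alpha+\delta$ and $\|C\|_2\le\alpha$, then any solution of $XB-CY=G$, $YB^T-C^TX=H$ obeys $\max\{\|X\|,\|Y\|\}\le\max\{\|G\|,\|H\|\}/\delta$ in every unitarily invariant norm (Stewart--Sun, Ch.~V), applied with $B=\tilde\Sigma_1$, $C=\Sigma_2$ and with the $Z$ block appended to $X$; or (ii) the Jordan--Wielandt dilation, i.e.\ applying the Hermitian Davis--Kahan $\sin\Theta$ theorem to $\left[\begin{smallmatrix}0 & A\\ A^T & 0\end{smallmatrix}\right]$ and its perturbation, which also resolves the $U_3$ asymmetry you worried about automatically, because those directions carry eigenvalue $0\in[-\alpha,\alpha]$ and are therefore at distance at least $\delta$ from the spectrum of $\tilde\Sigma_1$. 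Without one of these ingredients the second inequality remains unproved in your proposal.
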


Let us use the above version of the perturbation bound to derive a lemma that is particularly useful in our case. Simple algebra tells us that
\begin{equation}
\tilde{A} [\tilde{V}_1, \tilde{V}_2] =  [\tilde{U}_1, \tilde{U}_2,\tilde{U}_3]  \left[
\begin{array}{cc}
\tilde{\Sigma}_1& 0\\
0&\tilde{\Sigma}_2 \\
0&0
\end{array} \right] = [\tilde{U}_1 \tilde{\Sigma}_1,\tilde{U}_2 \tilde{\Sigma}_2].
\end{equation}

\begin{align}
A[\tilde{V}_1, \tilde{V}_2]               & = [A \tilde{V}_1, A\tilde{V}_2]  \\
(\tilde{A} - A)[\tilde{V}_1, \tilde{V}_2] & = [\tilde{U}_1 \tilde{\Sigma}_1 - A \tilde{V}_1,\tilde{U}_2 \tilde{\Sigma}_2-A\tilde{V}_2] \\
\| \tilde{A} - A \|_F^2                   & = {\sf Tr}\left((\tilde{A} - A)[\tilde{V}_1, \tilde{V}_2] [\tilde{V}_1, \tilde{V}_2]^T (\tilde{A} - A)^T \right) \\
                                          & = \| R \|_F^2 + \|\tilde{U}_2 \tilde{\Sigma}_2-A\tilde{V}_2 \|_F^2 \geq \| R\|_F^2.
\end{align}
Similarly, we have 
\begin{align}
(\tilde{A} - A)^T [\tilde{U}_1, \tilde{U}_2, \tilde{U}_3] & = [\tilde{V}_1 \tilde{\Sigma}_1 - A^T \tilde{U}_1 ,\tilde{V}_2 \tilde{\Sigma}_2 - A^T \tilde{U}_2, -A^T \tilde{U}_3 ] \\
\| \tilde{A} - A \|_F^2                                   & = {\sf Tr}\left( (\tilde{A} - A)^T [\tilde{U}_1, \tilde{U}_2, \tilde{U}_3] [\tilde{U}_1, \tilde{U}_2, \tilde{U}_3]^T (\tilde{A} - A)\right) \\
& = \| S \|_F^2 + \|\tilde{V}_2 \tilde{\Sigma}_2 - A^T \tilde{U}_2 \|_F^2 + \| A^T \tilde{U}_3 \|_F^2 \geq \| S \|_F^2.
\end{align}
Thus, it holds that
$$
\| \tilde{A} - A \|_F \geq \max (\| R \|_F, \| S \|_F)
$$
and similarly we have (since the operator norm of a whole matrix is larger than that of the submatrix)
$$
\| \tilde{A} - A \|_2 \geq \max (\| R \|_2, \| S \|_2).
$$

Thus the following version of the Wedin's Theorem holds.
\begin{lemma}[Davis-Kahan-Wedin's Type Perturbation Bound]
\label{lma:dkw}
It holds that
$$
\sqrt{\| \sin \Phi \|_F^2 + \| \sin \Theta \|_F^2} \leq  \frac{\sqrt{2} \| E\|_F}{\delta}
$$
and also the following holds for $2$-norm (or any unitary invariant norm)
$$
\max \left\{ \| \sin \Phi \|_2, \| \sin \Theta \|_2 \right\} \leq  \frac{\| E\|_2}{\delta}.
$$
\end{lemma}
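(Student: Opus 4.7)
The plan is to deduce Lemma \ref{lma:dkw} as a direct corollary of Stewart's Lemma \ref{lma:stewart}, where the only work is replacing the residuals $R = A\tilde{V}_1 - \tilde{U}_1 \tilde{\Sigma}_1$ and $S = A^\tr \tilde{U}_1 - \tilde{V}_1 \tilde{\Sigma}_1$ by the perturbation $E = \tilde{A} - A$ itself. The separation conditions on $\tilde{\Sigma}_1$ and $\Sigma_2$ required by Lemma \ref{lma:stewart} are assumed to be inherited from its hypotheses (in particular the ``gap $\delta$'' is the same $\delta$ appearing in the bound).

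First, I would observe that $R$ is exactly the first block of $(\tilde{A}-A)\tilde{V}_1$, because $\tilde{A}\tilde{V}_1 = \tilde{U}_1 \tilde{\Sigma}_1$ by the SVD of $\tilde{A}$. Expanding $(\tilde{A}-A)[\tilde{V}_1, \tilde{V}_2]$ into its two column blocks and computing the Frobenius norm via the trace (using that $[\tilde{V}_1,\tilde{V}_2]$ has orthonormal columns) yields
\begin{equation*}
\|E\|_F^2 \;\geq\; \|R\|_F^2 + \|\tilde{U}_2\tilde{\Sigma}_2 - A\tilde{V}_2\|_F^2 \;\geq\; \|R\|_F^2,
\end{equation*}
and the analogous expansion of $(\tilde{A}-A)^\tr [\tilde{U}_1, \tilde{U}_2, \tilde{U}_3]$ gives $\|E\|_F^2 \geq \|S\|_F^2$. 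Adding these yields $\|R\|_F^2 + \|S\|_F^2 \leq 2\|E\|_F^2$. Plugging this into the Frobenius bound of Lemma \ref{lma:stewart} gives the first claim. For the spectral-norm bound I would use the same identities but pass to the $2$-norm: since the operator norm of a block submatrix is bounded by the operator norm of the enclosing matrix, $\|R\|_2 \leq \|E\|_2$ and $\|S\|_2 \leq \|E\|_2$, so $\max(\|R\|_2,\|S\|_2) \leq \|E\|_2$, and Stewart's second inequality closes the argument.

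I do not anticipate a genuine technical obstacle, since the bulk of the work has already been packaged inside Lemma \ref{lma:stewart}. The only subtlety is to verify that ``the $2$-norm of a column-subblock of $M$ is at most $\|M\|_2$'' (and analogously for row-subblocks); this follows because for any block-columns $[B_1,B_2]$ of $M$ one has $\|B_1 x\|_2 = \|M[\begin{smallmatrix}x\\0\end{smallmatrix}]\|_2 \leq \|M\|_2 \|x\|_2$. Once this elementary fact is invoked, both inequalities reduce to substitution, so the proposal is essentially a bookkeeping exercise unifying the Frobenius- and spectral-norm versions of the Davis--Kahan--Wedin inequality in the form needed later in the paper.
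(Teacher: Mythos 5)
Your proposal is correct and follows essentially the same route as the paper: both derive the lemma from Stewart's bound by identifying $R$ and $S$ as blocks of $(\tilde{A}-A)[\tilde{V}_1,\tilde{V}_2]$ and $(\tilde{A}-A)^T[\tilde{U}_1,\tilde{U}_2,\tilde{U}_3]$, obtaining $\|R\|_F,\|S\|_F \leq \|E\|_F$ and $\|R\|_2,\|S\|_2 \leq \|E\|_2$, and substituting into Stewart's two inequalities. The bookkeeping, including the factor $\sqrt{2}$ and the submatrix-norm observation for the spectral case, matches the paper's argument.
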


We will then introduce some concentration inequalities. Lemmas \ref{lma:rmt} and \ref{Proj.Lma} are concentration of measure results from random matrix theory.
\begin{lemma}[\cite{vershynin2010introduction}, Theorem 39]
	\label{lma:rmt}
	Let $Z \in \mathbb{R}^{m \times n}$ be a matrix whose rows $Z_{i \cdot}$ are independent sub-Gaussian isotropic random vectors in $\mathbb{R}^n$ with parameter $\sigma$. Then for every $t \geq 0$, with probability at least $1 - 2 \exp(-c t^2)$ one has
	$$
	\| Z \|_{2} \leq \sigma (\sqrt{m} + C \sqrt{n} + t)
	$$
	where $C, c>0$ are some universal constants.
\end{lemma}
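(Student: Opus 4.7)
\medskip
\textbf{Proof plan.} The plan is to establish this classical bound via an $\varepsilon$-net argument on the unit sphere combined with a scalar sub-Gaussian tail inequality. Write
\[
\|Z\|_2 = \sup_{x \in S^{n-1}} \|Zx\|_2 .
\]
For any fixed unit vector $x$, the coordinates of $Zx$ are $\langle Z_{i\cdot}, x\rangle$, $i=1,\dots,m$, which are independent by row-independence of $Z$. Each is a mean-zero scalar sub-Gaussian random variable with parameter $\sigma$ (by the scalar projection property of a sub-Gaussian vector), and by the isotropy assumption $\mathbb{E}\langle Z_{i\cdot}, x\rangle^2 = \sigma^2\|x\|_{\ell_2}^2 = \sigma^2$.

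The first step is to control $\|Zx\|_2$ for a single $x$. Since $\|Zx\|_2^2 = \sum_{i=1}^m \langle Z_{i\cdot}, x\rangle^2$ is a sum of independent sub-exponential random variables with mean $\sigma^2$, a Bernstein-type inequality gives
\[
\mathbb{P}\!\left( \|Zx\|_2^2 \geq \sigma^2 m + \sigma^2(\sqrt{m}\,s + s^2) \right) \leq 2 \exp(-c s^2)
\]
for all $s\geq 0$ and a universal $c>0$. Taking square roots and using $\sqrt{a+b} \leq \sqrt{a} + \sqrt{b}$ yields
\[
\mathbb{P}\!\left( \|Zx\|_2 \geq \sigma\sqrt{m} + \sigma s \right) \leq 2 \exp(-c s^2).
\]

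The second step is the net discretization. Fix a $1/4$-net $\mathcal{N} \subset S^{n-1}$; a standard volumetric argument gives $|\mathcal{N}| \leq 9^n$, and the usual approximation lemma yields $\|Z\|_2 \leq 2 \max_{x \in \mathcal{N}} \|Zx\|_2$. Applying the single-vector tail bound with $s = C_0\sqrt{n}+t$ and union-bounding,
\[
\mathbb{P}\!\left( \max_{x \in \mathcal{N}} \|Zx\|_2 \geq \sigma\sqrt{m} + \sigma(C_0\sqrt{n}+t) \right) \leq 9^n \cdot 2 \exp\!\bigl(-c(C_0\sqrt{n}+t)^2\bigr) \leq 2 \exp(-c' t^2),
\]
where $C_0$ is chosen large enough (depending on $c$ and $\log 9$) so that the $9^n = e^{n\log 9}$ factor is absorbed into the $-c C_0^2 n$ term in the exponent. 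Combining with the factor-of-$2$ net approximation gives $\|Z\|_2 \leq 2\sigma(\sqrt{m} + C_0\sqrt{n}+t)$, and after redefining $C$ and $c$ to absorb constants we obtain the stated bound.

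\textbf{Main obstacle.} The only nontrivial ingredient is the concentration step for $\|Zx\|_2^2$: because we only assume isotropy and a sub-Gaussian tail of one-dimensional marginals (not, say, independence of entries), we cannot apply Bernstein directly at the entry level. Instead, we apply Bernstein to the row-wise sum of the sub-exponential squares $\langle Z_{i\cdot}, x\rangle^2$, whose sub-exponential norm is controlled by $\sigma^2$ via the scalar sub-Gaussian assumption. Once this scalar deviation bound is in place, the net argument and the constant-tuning are routine.
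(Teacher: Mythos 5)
Note first that the paper offers no proof of this lemma: it is quoted verbatim from \citet{vershynin2010introduction} (Theorem 5.39 there), so the comparison is with that standard proof, and your net-plus-Bernstein strategy is exactly the strategy used in the cited source. Your single-vector step is fine: for fixed $x\in S^{n-1}$ the variables $\langle Z_{i\cdot},x\rangle^2$ are independent sub-exponential with mean $\sigma^2$ and sub-exponential norm $\precsim\sigma^2$, and Bernstein does give $\mathbb{P}\bigl(\|Zx\|_{\ell_2}\ge \sigma\sqrt{m}+\sigma s\bigr)\le 2\exp(-cs^2)$. The genuine gap is in the last step. Applying the approximation lemma directly to the operator norm, $\|Z\|_2\le 2\max_{x\in\mathcal{N}}\|Zx\|_{\ell_2}$, multiplies the \emph{entire} bound by the covering constant, so what you actually obtain is $\|Z\|_2\le \sigma(2\sqrt{m}+2C_0\sqrt{n}+2t)$. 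The factor $2$ on $t$ can be absorbed by rescaling $t$ and shrinking $c$, and the factor on $\sqrt{n}$ goes into $C$, but the coefficient $2$ on $\sqrt{m}$ cannot be ``absorbed into constants'': when $m\gg n$ and $t$ is small, $2\sigma\sqrt{m}$ is not bounded by $\sigma(\sqrt{m}+C\sqrt{n}+t)$ for any universal $C$. So your final sentence is where the argument fails to reach the statement, which asserts the leading term $\sigma\sqrt{m}$ with coefficient exactly $1$ (that sharpness is the whole point of Vershynin's theorem, which is a two-sided bound on all singular values).

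The missing idea is to run the net argument on the quadratic form rather than on the norm. One shows $\bigl\|\tfrac{1}{m\sigma^2}Z^TZ-I\bigr\|_2\le \max(\delta,\delta^2)$ with $\delta=C\sqrt{n/m}+t/\sqrt{m}$, by applying Bernstein to $\tfrac1m\sum_i\bigl(\langle Z_{i\cdot},x\rangle^2/\sigma^2-1\bigr)$ at each point of a $1/4$-net and using the symmetric approximation lemma $\|B\|_2\le(1-2\varepsilon)^{-1}\max_{x\in\mathcal{N}}|\langle Bx,x\rangle|$; an elementary singular-value lemma (Vershynin's Lemma 5.36) then converts this into $\sigma\sqrt{m}(1-\delta)\le s_{\min}(Z)\le s_{\max}(Z)\le \sigma\sqrt{m}(1+\delta)$, i.e.\ $\|Z\|_2\le\sigma(\sqrt{m}+C\sqrt{n}+t)$. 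In that route the net-covering constant hits only the fluctuation $\delta$, not the leading $\sqrt{m}$ term, which is precisely what your direct bound loses. To keep matters in perspective: the weaker inequality you do prove, $\|Z\|_2\le C\sigma(\sqrt{m}+\sqrt{n}+t)$, is all this paper ever uses (see the display \eqref{op.eq} in the proof of Lemma \ref{lma:sp-alg}, where the bound is applied with an outer constant $C$), so your argument would serve the applications, but it does not establish Lemma \ref{lma:rmt} as stated.
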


\begin{lemma}[\cite{hsu2012tail}, Projection Lemma]
	\label{Proj.Lma}
	Assume $Z \in \mathbb{R}^n$ is an isotropic sub-Gaussian vector with i.i.d. entries and parameter $\sigma$. $\mathcal{P}$ is a projection operator to a subspace of dimension $r$, then we have the following concentration inequality
	\begin{align*}
		\mathbb{P}(\| \mathcal{P} Z \|_{\ell_2}^2  \geq  \sigma^2 (r + 2 \sqrt{r t} + 2 t )  ) \leq \exp(-c t),
	\end{align*}
	where $c>0$ is a universal constant. 
\end{lemma}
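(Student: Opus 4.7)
The plan is to estimate $\|\mathcal{P} Z\|_{\ell_2}^2 = Z^\top \mathcal{P} Z$ by a Chernoff argument on a sub-Gaussian quadratic form. First I would record the elementary invariants: since the entries of $Z$ are iid with $\mathbb{E}[Z_i^2] = \sigma^2$ (this is the isotropy assumption applied to $v = e_i$), one has $\mathbb{E}[Z^\top \mathcal{P} Z] = \sigma^2 \operatorname{tr}(\mathcal{P}) = \sigma^2 r$; and because $\mathcal{P}$ is an orthogonal projection of rank $r$, one has $\|\mathcal{P}\|_F^2 = \operatorname{tr}(\mathcal{P}^2) = r$ and $\|\mathcal{P}\|_2 = 1$.

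The heart of the argument is a Hanson--Wright style bound on the centered moment generating function: for all $0 \le \lambda < c'/(2\sigma^2)$ with a suitable constant $c' > 0$,
\[
\mathbb{E}\bigl[\exp\bigl(\lambda (Z^\top \mathcal{P} Z - \sigma^2 r)\bigr)\bigr] \le \exp\!\left(\frac{C \lambda^2 \sigma^4 \, r}{1 - 2 c' \lambda \sigma^2}\right).
\]
I would establish this by factoring $\mathcal{P} = U U^\top$ with $U \in \mathbb{R}^{n \times r}$ having orthonormal columns, then combining the sub-Gaussian MGF on each coordinate $u_i^\top Z$ with a standard symmetrization/decoupling argument that handles the dependence among the $u_i^\top Z$. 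The two structural invariants $\|\mathcal{P}\|_F^2 = r$ and $\|\mathcal{P}\|_2 = 1$ control respectively the Gaussian-like and Bernstein-like regimes of the bound. A Chernoff inequality with deviation $u > 0$ followed by optimization in $\lambda$ then yields
\[
\mathbb{P}\bigl(Z^\top \mathcal{P} Z \ge \sigma^2 r + u\bigr) \le \exp\!\left(-c \min\!\left(\frac{u^2}{\sigma^4 r},\; \frac{u}{\sigma^2}\right)\right).
\]

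Substituting $u = \sigma^2(2\sqrt{rt} + 2t)$ closes the argument: indeed $u^2/(\sigma^4 r) = (2\sqrt{rt} + 2t)^2/r \ge 4t$ by expanding the square and dropping positive terms, while $u/\sigma^2 = 2\sqrt{rt} + 2t \ge 2t$, so the minimum is at least $2t$ and the tail is bounded by $\exp(-ct)$ after renaming the constant. The principal obstacle is the MGF bound in the middle step: because the iid entries of $Z$ are only sub-Gaussian (not Gaussian), the projected coordinates $U^\top Z$ are generally dependent, so the clean $\chi^2_r$ tail of Laurent--Massart does not apply verbatim. The remedy is a decoupling step comparing $Z^\top \mathcal{P} Z$ to its Gaussian analogue (where $\mathcal{P} Z$ is $\mathcal{N}(0,\sigma^2 \mathcal{P})$-distributed and the Laurent--Massart bound gives exactly $r + 2\sqrt{rt} + 2t$), with any lost constants absorbed into the universal $c$ appearing in the statement.
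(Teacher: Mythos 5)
Your argument is correct, but it takes a different route from the paper, which disposes of this lemma in one line: it simply invokes Theorem 2.1 of \cite{hsu2012tail}, the quadratic-form tail bound $\mathbb{P}\bigl(\|AZ\|_{\ell_2}^2 > \sigma^2(\operatorname{tr}\Sigma + 2\sqrt{\operatorname{tr}(\Sigma^2)\,t} + 2\|\Sigma\|_2 t)\bigr) \le e^{-t}$ with $\Sigma = A^T A$, specialized to $A = \mathcal{P}$ a rank-$r$ projection, where $\operatorname{tr}\Sigma = \operatorname{tr}(\Sigma^2) = r$ and $\|\Sigma\|_2 = 1$ give exactly the stated threshold $r + 2\sqrt{rt} + 2t$. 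You instead rebuild the inequality from scratch via a Hanson--Wright-type MGF bound for $Z^T\mathcal{P}Z$, Chernoff, and the substitution $u = \sigma^2(2\sqrt{rt}+2t)$; the invariants you isolate ($\|\mathcal{P}\|_F^2 = r$, $\|\mathcal{P}\|_2 = 1$, $\mathbb{E}Z^T\mathcal{P}Z = \sigma^2 r$) are precisely the quantities the cited theorem consumes, and your closing computation $\min\bigl(u^2/(\sigma^4 r),\, u/\sigma^2\bigr) \ge 2t$ is right, so the two-regime bound delivers $\exp(-ct)$, which suffices because the lemma only claims a universal constant $c$ in the exponent rather than the sharp $e^{-t}$. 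What the paper's citation buys is exact constants and no decoupling work; what your route buys is self-containedness, at the price of unspecified universal constants and of carrying out (or citing, e.g.\ Rudelson--Vershynin's Hanson--Wright inequality, which applies verbatim since the entries are i.i.d.\ mean-zero sub-Gaussian) the decoupling/Gaussian-comparison step you currently only sketch. One small bookkeeping slip: in your MGF display the admissible range $\lambda < c'/(2\sigma^2)$ does not match the denominator $1 - 2c'\lambda\sigma^2$ (for this range to keep the denominator positive you would want $\lambda < 1/(2c'\sigma^2)$, or the usual form $\exp(C\lambda^2\sigma^4 r)$ valid for $\lambda \le c'/\sigma^2$); this is immaterial to the conclusion but should be fixed if the argument is written out in full.
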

The proof of this lemma is a simple application of Theorem 2.1 in\cite{hsu2012tail} for the case that $\mathcal{P}$ is a rank $r$ positive semidefinite projection matrix.

The following two are standard Chernoff-type bounds for bounded random variables.

	 \begin{lemma}[\cite{hoeffding1963probability}, Hoeffding's Inequality]
		 Let $X_i, 1\leq i\leq n$ be independent random variables. Assume $a_i \leq X_i \leq b_i, 1\leq i\leq n$. Then for $S_n = \sum_{i=1}^n X_i$
		 \begin{align}
			 \mathbb{P}\left( |S_n - \mathbb{E} S_n|>u \right) \leq 2 \exp\left( - \frac{2u^2}{\sum_{i=1}^n (b_i - a_i)^2}\right).
		 \end{align}
	 \end{lemma}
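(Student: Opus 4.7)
The plan is to prove Hoeffding's inequality via the classical Chernoff-method argument, reducing the tail bound to control of the moment generating function of each centered summand. Throughout I work with $Y_i \deq X_i - \mathbb{E} X_i$, so $\mathbb{E} Y_i = 0$ and $a_i - \mathbb{E} X_i \leq Y_i \leq b_i - \mathbb{E} X_i$, preserving the range $b_i - a_i$.

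First I would handle the one-sided tail $\mathbb{P}(S_n - \mathbb{E} S_n > u)$. By Markov's inequality applied to the random variable $\exp(\lambda(S_n - \mathbb{E} S_n))$ with $\lambda > 0$, followed by independence of the $Y_i$, we obtain
\begin{align*}
\mathbb{P}(S_n - \mathbb{E} S_n > u) \leq e^{-\lambda u} \prod_{i=1}^n \mathbb{E}[e^{\lambda Y_i}].
\end{align*}
Next I would invoke the core technical estimate, Hoeffding's lemma: for any mean-zero random variable $Y$ with $\alpha \leq Y \leq \beta$,
\begin{align*}
\mathbb{E}[e^{\lambda Y}] \leq \exp\bigl(\lambda^2 (\beta - \alpha)^2/8\bigr).
\end{align*}
Applied coordinate-wise with $(\alpha,\beta) = (a_i - \mathbb{E} X_i, b_i - \mathbb{E} X_i)$, this yields $\prod_i \mathbb{E}[e^{\lambda Y_i}] \leq \exp(\lambda^2 \sum_i (b_i - a_i)^2 / 8)$.

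The main obstacle (and the only nontrivial step) is Hoeffding's lemma itself. The standard route I would follow is: by convexity of $y \mapsto e^{\lambda y}$ on $[\alpha, \beta]$, write $e^{\lambda y} \leq \frac{\beta - y}{\beta - \alpha} e^{\lambda \alpha} + \frac{y - \alpha}{\beta - \alpha} e^{\lambda \beta}$, take expectations and use $\mathbb{E} Y = 0$ to obtain $\mathbb{E}[e^{\lambda Y}] \leq \frac{\beta}{\beta - \alpha} e^{\lambda \alpha} - \frac{\alpha}{\beta - \alpha} e^{\lambda \beta}$. Writing the right side as $e^{\varphi(\lambda)}$ and checking that $\varphi(0) = \varphi'(0) = 0$ while $\varphi''(\mu) \leq (\beta - \alpha)^2/4$ for all $\mu$ (via the AM-GM bound on a Bernoulli-type variance), Taylor's theorem gives $\varphi(\lambda) \leq \lambda^2 (\beta-\alpha)^2/8$.

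Finally, I would optimize the resulting bound
\begin{align*}
\mathbb{P}(S_n - \mathbb{E} S_n > u) \leq \exp\Bigl(-\lambda u + \lambda^2 \sum_{i=1}^n (b_i - a_i)^2/8\Bigr)
\end{align*}
over $\lambda > 0$. The minimizer is $\lambda^* = 4u / \sum_i (b_i - a_i)^2$, producing the bound $\exp\bigl(-2 u^2 / \sum_i (b_i - a_i)^2\bigr)$. Running the same argument on $-Y_i$ (which have the same range) controls the lower tail, and a union bound over the two tails yields the stated factor of $2$ and completes the proof.
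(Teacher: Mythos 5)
Your proof is correct: the Chernoff--Markov reduction, Hoeffding's lemma for the moment generating function of a bounded mean-zero variable (with the convexity/Taylor argument giving the $(\beta-\alpha)^2/8$ constant), the optimization $\lambda^* = 4u/\sum_i (b_i-a_i)^2$ yielding the exponent $-2u^2/\sum_i(b_i-a_i)^2$, and the union bound over the two tails are all accurate. The paper itself states this lemma as a cited classical result of Hoeffding and gives no proof, so there is nothing to compare against; your argument is the standard one and is complete.
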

	 \begin{lemma}[\cite{bennett1962probability}, Bernstein's Inequality]
		 Let $X_i,1\leq i\leq n$ be independent zero-mean random variables. Suppose $|X_i|\leq M, 1\leq i \leq n$. Then 
		 \begin{align}
			 \mathbb{P}\left( \sum_{i=1}^n X_i > u \right) \leq \exp \left( - \frac{u^2/2}{\sum_{i=1}^n \mathbb{E} X_i^2 + Mu/3}\right).
		 \end{align}
	 \end{lemma}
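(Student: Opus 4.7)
The plan is to use the standard Cram\'er--Chernoff method: bound the moment generating function of each $X_i$ using both the zero-mean assumption and the uniform bound $|X_i| \le M$, take the product by independence, and then optimize the free exponential parameter. Concretely, for any $t > 0$ Markov's inequality gives
\begin{align*}
\mathbb{P}\Bigl(\sum_{i=1}^n X_i > u\Bigr) \;\le\; e^{-tu}\, \mathbb{E}\exp\Bigl(t\sum_{i=1}^n X_i\Bigr) \;=\; e^{-tu}\prod_{i=1}^n \mathbb{E}\, e^{tX_i},
\end{align*}
so everything reduces to controlling a single MGF $\mathbb{E}\, e^{tX_i}$.

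For each $i$ I would expand $e^{tX_i}$ in its Taylor series and use $\mathbb{E}[X_i]=0$ together with $|X_i|^{k-2} \le M^{k-2}$ to write
\begin{align*}
\mathbb{E}\, e^{tX_i} \;=\; 1 + \sum_{k\ge 2}\frac{t^k \mathbb{E}[X_i^k]}{k!} \;\le\; 1 + \frac{\mathbb{E}[X_i^2]}{M^2}\bigl(e^{tM}-1-tM\bigr).
\end{align*}
Using $1+x \le e^x$ and taking the product over $i$ gives
\begin{align*}
\prod_{i=1}^n \mathbb{E}\, e^{tX_i} \;\le\; \exp\!\Bigl(\tfrac{V}{M^2}(e^{tM}-1-tM)\Bigr), \qquad V := \sum_{i=1}^n \mathbb{E}[X_i^2].
\end{align*}
Combining with the Markov step yields the Bennett-type bound $\mathbb{P}(\sum X_i > u) \le \exp(-tu + \tfrac{V}{M^2}(e^{tM}-1-tM))$.

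To recover the exact Bernstein form stated, I would use the elementary numerical inequality $e^x - 1 - x \le \tfrac{x^2/2}{1 - x/3}$ valid for $0 \le x < 3$, applied with $x = tM$. This gives $\prod_i \mathbb{E}\, e^{tX_i} \le \exp(\tfrac{t^2 V}{2(1-tM/3)})$ for $tM<3$, and hence
\begin{align*}
\mathbb{P}\Bigl(\sum_{i=1}^n X_i > u\Bigr) \;\le\; \exp\!\Bigl(-tu + \tfrac{t^2 V}{2(1-tM/3)}\Bigr).
\end{align*}
Finally I would choose $t = u/(V + Mu/3)$, which is easily checked to satisfy $tM<3$, and after a routine algebraic simplification the right-hand side collapses to $\exp(-\tfrac{u^2/2}{V + Mu/3})$, matching the stated inequality.

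The argument is essentially mechanical once the MGF bound is in place; the only mild obstacle is verifying the numerical inequality $e^x - 1 - x \le \tfrac{x^2/2}{1-x/3}$ on $[0,3)$ (which is a one-variable calculus check comparing derivatives at $0$ and monotonicity) and carrying out the optimization in $t$ cleanly. No structural ingredients beyond independence, the zero-mean assumption, and the almost-sure bound $|X_i|\le M$ are required, so no other lemma from the paper is invoked.
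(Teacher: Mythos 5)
Your proof is correct: the Chernoff step, the MGF bound via the Taylor expansion with $\mathbb{E}[X_i^k]\le M^{k-2}\,\mathbb{E}[X_i^2]$, the elementary inequality $e^x-1-x\le \tfrac{x^2/2}{1-x/3}$ on $[0,3)$ (which follows from $k!\ge 2\cdot 3^{k-2}$), and the choice $t=u/(V+Mu/3)$ all check out, and the algebra does collapse to $\exp\bigl(-\tfrac{u^2/2}{V+Mu/3}\bigr)$. Note that the paper itself gives no proof of this lemma -- it is stated as a cited classical prerequisite (Bennett/Bernstein) -- so your argument is not an alternative to anything in the paper but simply the standard derivation, correctly carried out.
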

	
	We will end this section stating the Fano's information inequality, which plays a key role in many information theoretic lower bounds.
 	\begin{lemma}[\cite{tsybakov2009introduction} Corollary 2.6]
 		\label{lma:fano}
 	Let $\mc{P}_0, \mc{P}_1,\ldots,\mc{P}_M$ be probability measures on the same probability space $(\Theta,\mathcal{F})$, $M \geq 2$. If for some $0<\alpha<1$
 	\begin{align}
 		\label{Fano.Cnd}
 	\frac{1}{M+1} \sum_{i=0}^M \KL(\mc{P}_i || \bar{\mc{P}}) \leq \alpha  \cdot \log M
 	\end{align}
 	where
 	$$
 	\bar{\mc{P}} = \frac{1}{M+1} \sum_{i=0}^M \mc{P}_i.
 	$$ 
 	Then 
 	\begin{align}
 	p_{e,M} \geq \bar{p}_{e,M} \geq \frac{\log(M+1)-\log 2}{\log M} - \alpha
 	\end{align}
 	where $p_{e, M}$ is the minimax error for the multiple testing problem.
 	\end{lemma}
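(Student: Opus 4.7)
The plan is to reduce the claim to a standard Fano-type information-theoretic inequality. First I would handle the trivial half: the minimax error $p_{e,M}$ is obtained by taking a supremum over hypotheses of the conditional error probability, while $\bar{p}_{e,M}$ is the average of the same quantities under a uniform prior over $\{0,1,\ldots,M\}$, so $p_{e,M} \geq \bar{p}_{e,M}$ is immediate. The substantive content is the lower bound on $\bar{p}_{e,M}$.

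Next I would set up the information-theoretic framework. Let $\theta$ be uniform on $\{0,1,\ldots,M\}$, draw an observation $X \sim \mc{P}_\theta$, and let $\hat{\theta} = \hat{\theta}(X)$ be any (possibly randomized) test. By the classical Fano inequality applied to the Markov chain $\theta \to X \to \hat{\theta}$,
\begin{equation*}
H(\theta \mid \hat{\theta}) \;\leq\; \log 2 + \bar{p}_{e,M}\cdot \log M,
\end{equation*}
where I used $H(p) \leq \log 2$ for the binary entropy of the error indicator. On the other hand, since $\theta$ is uniform, $H(\theta) = \log(M+1)$, so
\begin{equation*}
H(\theta \mid \hat{\theta}) \;=\; \log(M+1) - I(\theta;\hat{\theta}) \;\geq\; \log(M+1) - I(\theta;X),
\end{equation*}
by the data-processing inequality.

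The key identity I would then invoke is that, for a uniform mixture,
\begin{equation*}
I(\theta; X) \;=\; \frac{1}{M+1}\sum_{i=0}^M \KL(\mc{P}_i \,\|\, \bar{\mc{P}}),
\end{equation*}
which follows directly from writing out the mutual information and recognizing the averaged KL divergence from the mean measure $\bar{\mc{P}}$. Under hypothesis \eqref{Fano.Cnd}, this mutual information is at most $\alpha \log M$. Combining the two displays yields
\begin{equation*}
\log(M+1) - \alpha \log M \;\leq\; \log 2 + \bar{p}_{e,M} \log M,
\end{equation*}
and rearranging gives $\bar{p}_{e,M} \geq \frac{\log(M+1) - \log 2}{\log M} - \alpha$, as required.

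The main obstacle is mostly bookkeeping: one has to be careful that Fano's inequality is applied with the correct cardinality ($M$ rather than $M+1$ in the $\log$ factor, since conditional on being wrong there are $M$ remaining alternatives), and that the mutual-information-to-average-KL identity is invoked with the uniform prior that matches the definition of $\bar{p}_{e,M}$. Beyond that, the proof is a routine combination of Fano's inequality, data processing, and the convexity identity for mutual information; no new probabilistic machinery is needed.
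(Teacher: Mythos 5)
Your argument is correct: the reduction $p_{e,M}\geq \bar p_{e,M}$, Fano's inequality with the $\log M$ factor (since conditional on an error there are $M$ remaining alternatives) and $h(\bar p_{e,M})\leq \log 2$, the data-processing step $I(\theta;\hat\theta)\leq I(\theta;X)$, and the identity $I(\theta;X)=\frac{1}{M+1}\sum_{i=0}^M \KL(\mc{P}_i\,\|\,\bar{\mc{P}})$ for the uniform prior combine exactly as you say to give the stated bound. Note that the paper does not prove this lemma at all -- it is imported verbatim from \cite{tsybakov2009introduction} (Corollary 2.6) -- and your derivation is precisely the standard argument underlying that cited result, so there is nothing to reconcile beyond observing that your proof is a complete and correct self-contained justification of the quoted statement.
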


\subsection{Main Proofs}

\begin{proof}[Proof of Lemma \ref{lma:sp-alg}]
Recall the matrix form of the submatrix model, with the SVD decomposition of the mean signal matrix $M$
$$
X = \lambda \sqrt{k_m k_n} U V^T + Z.
$$
The largest singular value of $\lambda U V^T$ is $\lambda \sqrt{k_m k_n}$, and all the other singular values are $0$s. Davis-Kahan-Wedin's perturbation bound tells us how close the singular space of $X$ is to the singular space of $M$.
Let us apply the derived Lemma~\ref{lma:dkw} to $X = \lambda \sqrt{k_m k_n} U V^T +Z$. Denote the top left and right singular vector of $X$ as $\tilde{U}$ and $\tilde{V}$. One can see that $\mathbb{E} \| Z \|_2 \asymp \sigma(\sqrt{m}+\sqrt{n})$ under very mild finite fourth moment conditions through a result in \citep{latala2005some}. Lemma \ref{lma:rmt} provides a more explicit probabilisitic bound for the concentration of the largest singular value of i.i.d sub-Gaussian random matrix. 
Because the rows $Z_{i \cdot}$ are sampled from product measure of mean zero sub-Gaussians, they naturally satisfy the isotropic condition. Hence, with probability at least $1 - 2 \exp\left( - c (m+n)\right)$, via Lemma \ref{lma:rmt}, we reach
\begin{align}
	\label{op.eq}
\| Z \|_2 \leq C \cdot \sigma(\sqrt{m}+\sqrt{n}).
\end{align}

Using Weyl's interlacing inequality, we have
$$
| \sigma_i(X)  - \sigma_i(M) | \leq \| Z \|_2
$$
and thus
$$
\sigma_1(X) \geq \lambda \sqrt{k_m k_n}  - \| Z \|_{2}
$$
$$
\sigma_2(X) \leq \| Z \|_{2}.
$$
Applying Lemma \ref{lma:dkw}, we have
$$
\max \left\{ |\sin \angle (U, \tilde{U}) |, |\sin \angle (V, \tilde{V})| \right\} \leq \frac{C \sigma (\sqrt{m}+\sqrt{n})}{\lambda \sqrt{k_m k_n} -  C \sigma (\sqrt{m}+ \sqrt{n})} \asymp \frac{\sigma (\sqrt{m}+\sqrt{n})}{\lambda \sqrt{k_m k_n}}.
$$
In addition
$$
\| U - \tilde{U} \|_{\ell_2}  = \sqrt{2 - 2 \cos \angle (U, \tilde{U}) } = 2 |\sin \frac{1}{2}\angle(U, \tilde{U} ) |,
$$
which means
$$
\max \left\{ \| U - \tilde{U} \|_{\ell_2}, \| V - \tilde{V}\|_{\ell_2} \right\} \leq C \cdot \frac{\sigma (\sqrt{m}+\sqrt{n})}{\lambda \sqrt{k_m k_n}}.
$$
And according to the definition of the canonical angles, we have
$$
\max \left\{ \| U U^T - \tilde{U} \tilde{U}^T \|_{2}, \| V V^T - \tilde{V} \tilde{V}^T \|_{2} \right\} \leq C  \cdot \frac{\sigma (\sqrt{m}+\sqrt{n})}{\lambda \sqrt{k_m k_n}}.
$$

Now let us assume we have two observations of $X$. We use the first observation $\tilde{X}$ to solve for the singular vectors $\tilde{U},\tilde{V}$, we use the second observation $X$ to project to the singular vectors $\tilde{U},\tilde{V}$. We can use Tsybakov's sample cloning argument (\cite{tsybakovaggregation}, Lemma 2.1) to create two independent observations of X when noise is Gaussian as follows. Create a pure Gaussian matrix $Z'$ and define $X_1 = X + Z' = M + (Z + Z')$ and $X_2 = X - Z' = M + (Z - Z')$, making $X_1, X_2$ independent with the variance being doubled. This step is not essential because we can perform random subsampling as in \cite{vu2014simple}; having two observations instead of one does not change the picture statistically or computationally. Recall $X = M + Z = \lambda \sqrt{k_m k_n} U V^T +Z$.

Define the projection operator to be $\mathcal{P}$, we start the analysis by decomposing
\begin{align}
	\label{sp.eq}
\| \mathcal{P}_{\tilde{U}} X_{\cdot j}  - M_{\cdot j} \|_{\ell_2} \leq \| \mathcal{P}_{\tilde{U}} (X_{\cdot j}  - M_{\cdot j}) \|_{\ell_2} + \| (\mathcal{P}_{\tilde{U}} - I) M_{\cdot j} \|_{\ell_2}
\end{align}
for $1 \leq j \leq n$. 

For the first term of \eqref{sp.eq}, note that $X_{\cdot j} - M_{\cdot j} = Z_{\cdot j} \in \mathbb{R}^m$ is an i.i.d. isotropic sub-Gaussian vector, and thus we have through Lemma \ref{Proj.Lma}, for $t = (1+1/c) \log n$, $Z_{\cdot j} \in \mathbb{R}^m, 1\leq j\leq n$ and $r = 1$
\begin{align}
	\mathbb{P} \left(\| \mathcal{P}_{\tilde{U}} (X_{\cdot j}  - M_{\cdot j}) \|_{\ell_2} \geq \sigma \sqrt{r} \sqrt{1+ 2 \sqrt{1+1/c}\cdot \sqrt{\frac{\log n}{r}} + 2 (1+1/c) \cdot \frac{\log n}{r}} \right) \leq n^{-c-1}.
\end{align}
We invoke the union bound for all $1\leq j \leq n$ to obtain
\begin{align}
\max_{1\leq j \leq n} \| \mathcal{P}_{\tilde{U}} (X_{\cdot j}  - M_{\cdot j}) \|_{\ell_2} 
& \leq  \sigma \sqrt{r} + \sqrt{2(1+1/c)} \cdot \sigma \sqrt{\log n}  \\
& \leq \sigma + C \cdot \sigma \sqrt{\log n}
\end{align}
with probability at least $1 - n^{-c}$.

For the second term $M_{ \cdot j} = \tilde{X}_{ \cdot j} - \tilde{Z}_{ \cdot j}$ of \eqref{sp.eq}, there are two ways of upper bounding it. The first approach is to split
\begin{align}
	\label{tri.eq}
\| (\mathcal{P}_{\tilde{U}} - I) M \|_2 \leq \| (\mathcal{P}_{\tilde{U}} - I) \tilde{X} \|_2 + \| (\mathcal{P}_{\tilde{U}} - I) \tilde{Z} \|_2 \leq 2 \| \tilde{Z} \|_2.
\end{align}
The first term of \eqref{tri.eq} is $\sigma_2(\tilde{X}) \leq \sigma_2(M)+\| \tilde{Z} \|_2$ through Weyl's interlacing inequality, while the second term is bounded by $\| \tilde{Z} \|_2$. We also know that  $\|\tilde{Z}\|_2 \leq C_3 \cdot \sigma (\sqrt{m}+\sqrt{n})$. Recall the definition of the induced $\ell_2$ norm of a matrix $ (\mathcal{P}_{\tilde{U}} - I) M $:
\begin{align*}
\| (\mathcal{P}_{\tilde{U}} - I) M \|_2 \geq \frac{\|(\mathcal{P}_{\tilde{U}} - I) M V  \|_{\ell_2}}{\| V\|_{\ell_2}} = \|(\mathcal{P}_{\tilde{U}} - I) \lambda \sqrt{k_m k_n} U \|_{\ell_2} \geq \sqrt{k_n} \| (\mathcal{P}_{\tilde{U}} - I) M_{\cdot j} \|_{\ell_2}.
\end{align*}
In the second approach, the second term of \eqref{sp.eq} can be handled through perturbation Sin Theta Theorem \ref{lma:dkw}:
$$
\| (\mathcal{P}_{\tilde{U}} - I) M_{\cdot j} \|_{\ell_2} = \|(\mathcal{P}_{\tilde{U}} - \mathcal{P}_U) M_{\cdot j}  \|_{\ell_2} \leq \|\tilde{U} \tilde{U}^T - U U^T \|_2 \cdot \| M_{\cdot j} \|_{\ell_2} \leq  C \frac{\sigma \sqrt{m+n}}{\lambda \sqrt{k_m k_n}} \lambda \sqrt{k_m}.
$$
This second approach will be used in the multiple submatrices analysis.

Combining all the above, we have with probability at least $1 - n^{-c} - m^{-c}$, for all $1 \leq j \leq n$
\begin{align}
\| \mathcal{P}_{\tilde{U}} X_{\cdot j}  - M_{\cdot j} \|_{\ell_2} \leq C \cdot \left(\sigma \sqrt{\log n}+ \sigma \sqrt{\frac{m \vee n}{ k_n}} \right).
\end{align}
Similarly we have for all $1\leq i \leq m$, 
\begin{align}
\| \mathcal{P}_{\tilde{V}} X^T_{i \cdot }  - M^T_{i \cdot } \|_{\ell_2} \leq C \cdot \left( \sigma \sqrt{\log m}+ \sigma \sqrt{\frac{m \vee n}{ k_m}} \right).
\end{align}
Clearly we know that for $i \in R_m$ and $i' \in [m] \backslash R_m$
$$
\|  M^T_{i \cdot}-M^T_{i' \cdot} \|_{\ell_2}  = \lambda \sqrt{k_n}
$$
and for $j \in C_n$ and $j' \in [n] \backslash C_n$
$$
\|  M_{ \cdot j}-M_{ \cdot j'} \|_{\ell_2}  = \lambda \sqrt{k_m}
$$
Thus if 
\begin{align}
\lambda \sqrt{k_m} \geq 6C \cdot \left( \sigma \sqrt{\log n}+ \sigma \sqrt{\frac{m \vee n}{ k_n}} \right) \\
\lambda \sqrt{k_n} \geq 6C \cdot \left( \sigma \sqrt{\log m}+ \sigma \sqrt{\frac{m \vee n}{ k_m}} \right)
\end{align}
hold, then we have learned a metric $d$ (a one dimensional line) such that on this line, data forms  clusters in the sense that 
$$
2 \max_{i,i'\in R_m} |d_i - d_{i'}| \leq \min_{i \in R_m, i' \in [m] \backslash R_m} |d_i - d_{i'}|.
$$
In this case, a simple cut-off clustering recovers the nodes exactly.

In summary, if
$$
\lambda \geq C \cdot \sigma \left( \sqrt{\frac{\log n}{k_m }}+ \sqrt{\frac{\log m}{k_n}} +  \sqrt{\frac{m+n}{k_m k_n}} \right),
$$
the spectral algorithm succeeds with probability at least $$1 - m^{-c} - n^{-c} - 2 \exp\left(-c(m+n)\right).$$

\end{proof}

\begin{proof}[Proof of Lemma \ref{lma:thres-sp-alg}]
The proof of the validity of thresholded spectral algorithm at level $\sigma t$ is easy based on the proof of Lemma \ref{lma:sp-alg}. Firstly we have the following decomposition
\begin{align*}
	\eta_{\sigma t} (X) &= M + \eta_{\sigma t} (Z)  +  B\\
	&= \lambda 1_{R_m} 1_{C_n}^T +  \eta_{\sigma t} (Z)  +  B
\end{align*}
where $B$ is the bias matrix satisfying
\begin{align*}
	B_{ij} = 0, \quad \text{if}~(i,j) \notin R_m \times C_n \\
	|B_{ij}| \leq 2\sigma t, \quad \text{if}~(i,j) \in R_m \times C_n.
\end{align*}
Let us prove this fact. Clearly if $(i,j) \notin R_m \times C_n$, $B_{ij} = 0$. If $(i,j) \in R_m \times C_n$, we have
\begin{align*}
|B_{ij}| & = |\eta_{\sigma t}(\lambda + Z_{ij}) - \lambda  - \eta_{\sigma t}(Z_{ij})| \\
		 & \leq 	|\eta_{\sigma t}(\lambda + Z_{ij}) - (\lambda + Z_{ij})| + |(\lambda + Z_{ij}) -\lambda -  Z_{ij}| + |Z_{ij} -  \eta_{\sigma t}(Z_{ij})| \\
		 & \leq 2 \sigma t
\end{align*}
where the last step uses $|\eta_t(y) - y|\leq t$, for any $y$. 
Let us bound the variance of each thresholded entry $\eta_{\sigma t}(Z_{ij})$, 
\begin{align*}
	\mathbb{E} \left[ \eta_{\sigma t}(Z_{ij}) \right]^2 & = \int_{0}^\infty 2 z \cdot 2 \mathbb{P}( \eta_{\sigma t}(Z_{ij}) > z ) dz \\
	& = \int_{0}^\infty 4 z  \mathbb{P}( Z_{ij} > z + \sigma t  ) dz  \\
	& =  \int_{0}^{\infty}  4z \exp\left\{- c \cdot \frac{(z + \sigma t)^2}{2 \sigma^2} \right\} dz \\
	& \leq C \cdot \sigma^2 \cdot \exp(-\frac{t^2}{2})
\end{align*}
for some universal constant $C$. 
Clearly after thresholding, $\eta_{\sigma t}(Z)$ still have i.i.d entries, but the variance has been significantly reduced as $t \rightarrow \infty$. 

Via the perturbation analysis established in Proof of Lemma~\ref{lma:sp-alg}
\begin{align*}
 \| \eta_{\sigma t} (Z)  +  B \|_2 &\leq \| \eta_{\sigma t} (Z) \|_2 + \| B \|_2\\
 & \leq C \cdot \sigma \sqrt{m\vee n} \cdot \exp(-\frac{t^2}{2}) + 2 \sqrt{k_m k_n} \sigma t
\end{align*}
as $B$ only have $k_m k_n$ non zero entries. 
Thus applying Lemma \ref{lma:dkw}, we have
\begin{align*}
\max \left\{ |\sin \angle (U, \tilde{U}) |, |\sin \angle (V, \tilde{V})| \right\} & \leq \frac{C \cdot \sigma \sqrt{m\vee n} \cdot \exp(-\frac{t^2}{2}) + 2 \sqrt{k_m k_n} \sigma t}{\lambda \sqrt{k_m k_n} -  C \cdot \sigma \sqrt{m\vee n} \cdot \exp(-\frac{t^2}{2}) - 2 \sqrt{k_m k_n} \sigma t} \\
& \precsim \frac{\sqrt{m\vee n} \cdot \sigma \exp(-\frac{t^2}{2}) +  \sqrt{k_m k_n}  \sigma t}{ \lambda \sqrt{k_m k_n}}.
\end{align*}

As usual, we continue the analysis by decomposing (following the steps as in Lemma \ref{lma:dkw}, but with an additional bias term $B$)
\begin{align*}
\| \mathcal{P}_{\tilde{U}} \eta(X_{\cdot j})  - M_{\cdot j} \|_{\ell_2} & \leq \| \mathcal{P}_{\tilde{U}} (\eta(X_{\cdot j})  - M_{\cdot j}) \|_{\ell_2} + \| (\mathcal{P}_{\tilde{U}} - I) M_{\cdot j} \|_{\ell_2} \\
& \leq \| \mathcal{P}_{\tilde{U}} \eta(Z_{\cdot j}) \|_{\ell_2} + \| B_{\cdot j} \|_{\ell_2} + \| (\mathcal{P}_{\tilde{U}} - I) M_{\cdot j} \|_{\ell_2} \\
& \leq C\cdot \left\{ \sigma \exp(-\frac{t^2}{2}) \sqrt{\log n} + \sqrt{k_m} \sigma t + \frac{\sqrt{m\vee n} \cdot \sigma \exp(-\frac{t^2}{2}) +  \sqrt{k_m k_n}  \sigma t}{ \lambda \sqrt{k_m k_n}} \lambda \sqrt{k_m} \right\}
\end{align*}
for $1 \leq j \leq n$. 
We know for $j \in C_n$ and $j' \in [n] \backslash C_n$
$$
\|  M_{ \cdot j}-M_{ \cdot j'} \|_{\ell_2}  = \lambda \sqrt{k_m}
$$
Thus if 
\begin{align}
\lambda \sqrt{k_m} \geq 6C \cdot \left\{ \sigma \exp(-\frac{t^2}{2}) \sqrt{\log n} + \sqrt{k_m} \sigma t + \frac{\sqrt{m\vee n} \cdot \sigma \exp(-\frac{t^2}{2}) +  \sqrt{k_m k_n}  \sigma t}{  \sqrt{ k_n}}  \right\} 
\end{align}
hold, then we have learned a metric $d$ (a one dimensional line) such that on this line, data forms clusters.
In this case, a simple cut-off clustering recovers the nodes exactly.

In summary, if
$$
\frac{\lambda}{\sigma} \geq C \cdot  \left(  \left[ \sqrt{\frac{m \vee n}{k_m k_n}} + \sqrt{\frac{\log n}{k_m} \vee \frac{\log m}{k_n}} \right] \cdot e^{-t^2/2} + t \right),
$$
the thresholded spectral algorithm succeeds with probability at least $$1 - m^{-c} - n^{-c} - 2 \exp\left(-c(m+n)\right).$$

\end{proof}

\begin{proof}[Proof of Theorem \ref{CLL.Thm}]

	 Computational lower bound for localization (support recovery) is of different nature than the computational lower bound for detection (two point testing). The idea is to design a randomized polynomial time algorithmic reduction to relate a an instance of \emph{hidden clique} problem to our submatrix localization problem. %Then, any polynomial time algorithm that can successfully localize in the submatrix problem will induce a polynomial time algorithm for localization in the original \emph{hidden clique} problem. 
	  The proof proceeds in the following way: we will construct a randomized polynomial time transformation $\mathcal{T}$ to map a random instance of $\mathcal{G}(N,\kappa)$ to a random instance of our submatrix $\mathcal{M}(m=n,k_m\asymp k_n\asymp k,\lambda/\sigma)$ (abbreviated as $\mathcal{M}(n,k,\lambda/\sigma)$). Then we will provide a quantitative computational lower bound by showing that if there is a polynomial time algorithm that pushes below the hypothesized computational boundary for localization in the submatrix model, there will be a polynomial time algorithm that solves hidden clique localization with high probability (a contradiction to $\sf HC_{l}$).
	 
%	 Let us first introduce the basic lemmas used in this proof. The following two are Chernoff-type bounds for bounded random variables.
	 
%	*************************

	 Denote the randomized polynomial time transformation as $$\mathcal{T}: \mathcal{G}(N, \kappa(N)) \rightarrow M(n,k = n^{\alpha},\lambda/\sigma = n^{-\beta}).$$ There are several stages for the construction of the algorithmic reduction. First we define a graph $\mathcal{G}^{e}(N, \kappa(N))$ that is stochastically equivalent to the hidden clique graph $\mathcal{G}(N,\kappa(N))$, but is easier for theoretical analysis.  $\mathcal{G}^e$ has the property: each node independently has the probability $\kappa(N)/N$ to be a clique node, and with the remaining probability a non-clique node. Using Bernstein's inequality and the inequality  \eqref{eq:bern-trick} proved below. with probability at least $1 - 2N^{-1}$ the number of clique nodes  $\kappa^{e}$ in $\mathcal{G}^e$
	 \begin{align}
	 \kappa \left(1 - \sqrt{\frac{4 \log N}{\kappa}} \right) \leq \kappa^e \leq  \kappa \left(1 + \sqrt{\frac{4 \log N}{\kappa}} \right) \Rightarrow \kappa^e \asymp \kappa
	 \end{align}
	 as long as $\kappa \succsim \log N$.

	 Consider a hidden clique graph $\mathcal{G}^e(2N,2\kappa(N))$ with $N = n$ and $\kappa(N) = \kappa$. Denote the set of clique nodes for $\mathcal{G}^e(2N,2\kappa(N))$ to be $C_{N,\kappa}$. Represent the hidden clique graph using the symmetric adjacency matrix $G \in \{-1,1\}^{2N \times 2N}$, where $G_{ij} = 1$ if $i,j \in C_{N,\kappa}$, otherwise with equal probability to be either $-1$ or $1$. As remarked before, with probability at least $1 - 2N^{-1}$, we have planted $2 \kappa(1\pm o(1))$ clique nodes in graph $\mathcal{G}^e$ with $2N$ nodes. Take out the upper-right submatrix of $G$, denote as $G_{UR}$ where $U$ is the index set $1\leq i \leq N$ and $R$ is the index set $N+1 \leq j \leq 2N$. Now $G_{UR}$ has independent entries. 
	 
	 The construction of $\mathcal{T}$ employs the \emph{Bootstrapping} idea. Generate $l^2$ (with $l \asymp n^{\beta}, 0<\beta<1/2$) matrices through bootstrap subsampling as follows. Generate $l-1$ independent index vectors $\psi^{(s)} \in \mathbb{R}^n, 1\leq s<l$, where each element $\psi^{(s)}(i), 1\leq i\leq n$ is a random draw with replacement from the row indices $[n]$. Denote vector $\phi^{(0)}(i)=i, 1\leq i\leq n$ as the original index set. Similarly, we can define independently the column index vectors $\phi^{(t)}, 1\leq t<l$. We remark that these bootstrap samples can be generated in polynomial time $\Omega(l^2 n^2)$. The transformation is a weighted average of $l^2$ matrices of size $n\times n$ generated based on the original adjacency matrix $G_{UR}$.

	 \begin{align}
		\mathcal{T}:~~ M_{ij} = \frac{1}{l} \sum_{0\leq s,t <l} (G_{UR})_{\psi^{(s)}(i) \phi^{(t)}(j)}, ~~ 1\leq i,j \leq n.
	 \end{align}
	 Due to the bootstrapping property, the matrices $\left[(G_{UR})_{\psi^{(s)}(i) \phi^{(t)}(j)}\right]_{1\leq i,j\leq n}$, indexed by $0\leq s,t<l$ are independent of each other. 
	 Recall that $C_{N,\kappa}$ stands for the clique set of the hidden clique graph. We define the row candidate set $R_{l} : = \{ i \in [n]: \exists~ 0\leq s<l, \psi^{(s)}(i) \in C_{N,\kappa}  \}$ and column candidate set $C_{l} := \{ j \in [n]: \exists~ 0\leq t<l, \phi^{(t)}(j) \in C_{N,\kappa}\}$. Observe that  $R_{l} \times C_{l}$ are the indices where the matrix $M$ contains signal.

	 There are two cases for $M_{ij}$, given the candidate set $R_{l}\times C_{l}$. If $i \in R_{l}$ and $j \in C_{l}$, namely when $(i,j)$ is a clique edge in at least one of the $l^2$ matrices, then $\mathbb{E}[ M_{ij} | \mathcal{G}^e] \geq l^{-1}$ where the expectation is taken over the bootstrap $\sigma$-field conditioned on the candidate set $R_{l}\times C_{l}$ and the original $\sigma$-field of $\mathcal{G}^e$. Otherwise $\mathbb{E} [M_{ij}| \mathcal{G}^e]  = l(\frac{|E|}{N^2 - \kappa^2}-\frac{1}{2}) $ for $(i,j) \notin R_{l}\times C_{l}$, where $|E|$ is a $\text{Binomial}(N^2 - \kappa^2, 1/2)$. With high probability, $\mathbb{E} [M_{ij}| \mathcal{G}^e]  \asymp \frac{l}{\sqrt{N^2 - \kappa^2}} \asymp \frac{l}{n} = o(\frac{1}{l})$. Thus the mean separation between the signal position and non-signal position is $\frac{1}{\ell} -  \frac{l}{n} \asymp \frac{1}{\ell}$. Note in the submatrix model, it does not matter if the noise has mean zero or not (since we can subtract the mean)-- only the signal separation matters. 
	 
Now let us discuss the independence issue in $M$ through our Bootstrapping construction. Clearly due to sampling with replacement and bootstrapping, condition on $\mathcal{G}^e$, we have independence among samples for the same location $(i,j)$
	$$
	(G_{UR})_{\psi^{(s)}(i) \phi^{(t)}(j)} \perp (G_{UR})_{\psi^{(s')}(i) \phi^{(t')}(j)}.
	$$
	For the independence among entries in one Bootstrapped matrix, clearly
	$$
	(G_{UR})_{\psi^{(s)}(i) \phi^{(t)}(j)} \perp (G_{UR})_{\psi^{(s)}(i') \phi^{(t)}(j')}.
	$$
	The only case where there might be a slight dependence is between $(G_{UR})_{\psi^{(s)}(i) \phi^{(t)}(j)}$ and $(G_{UR})_{\psi^{(s)}(i) \phi^{(t)}(j')}$. The way to eliminate the slight dependence is through \cite{vu2008random}'s result on universality of random discrete graphs. \cite{vu2008random} showed random regular graph $\mathcal{G}(n,n/2)$ shares many similarities as Erd\H{o}s-R\'{e}nyi random graph $\mathcal{G}(n, 1/2)$, for instance, top and second eigenvalues ($n/2$ and $\sqrt{n}$ respectively), limiting spectral distribution, sandwich conjecture, determinant, etc. Let us consider the case where the upper-right of the adjacency matrix $G$ consists of random bi-regular graph (see \cite{deshpande2013finding} for difficulty of clique problem under random regular graph) with degree $n/2$ instead of the Erd\H{o}s-R\'{e}nyi graph. The only thing we need to change is assuming hidden clique hypothesis is still valid for the following random graph: for a $n\times n$ adjacency matrix $G$, first find a clique/principal submatrix of size $k$ uniformly randomly and connect density, for the remaining part of the matrix, sample a random regular graph of $G(n-k, \frac{n-k}{2})$ and a random bi-regular graph of size $k \times (n-k)$ with left regular degree $n/2-k$ and right regular degree $k/2$ (here degree test will not work in this graph and spectral barrier still suggests $k \precsim \sqrt{n}$ is hard due to universality result of random discrete graphs). In the bootstrapping step, condition on the same row $\psi^{(s)}(i)$ being not a clique, 
$(G_{UR})_{\psi^{(s)}(i) \phi^{(t)}(j)} \perp (G_{UR})_{\psi^{(s)}(i) \phi^{(t)}(j')} | \psi^{(s)}(i)$, and each one is a Rademacher random variable (regardless of the choice of $\psi^{(s)}(i)$), which implies $(G_{UR})_{\psi^{(s)}(i) \phi^{(t)}(j)} \perp (G_{UR})_{\psi^{(s)}(i) \phi^{(t)}(j')}$ holds unconditionally. Thus in the bootstrapping procedure, we have independence among entries within the matrix.

	 Let us move to verify the sub-Gaussianity of $M$ matrix. Note that for the index $i,j$ that is not a clique for any of the matrices, $M_{ij}$ is sub-Gaussian, due to Hoeffding's inequality
	 \begin{align}
		 \mathbb{P}\left(|M_{ij} - \mathbb{E}M_{ij}| \geq u \right) \leq 2 \exp (- u^2/2).
	 \end{align}
	 For the index $i,j$ being a clique in at least one of the matrices, we claim the number of matrices has $(i,j)$ being clique is $O^*(1)$. Due to Bernstein's inequality, we have $\max_{i} |\{ 0\leq s <l:\psi^{(s)}(i) \in C_{N,\kappa} \}| \leq \frac{\kappa l}{n} + \frac{8}{3} \log n$ with probability at least $1- n^{-1}$. This further implies there are at least $l^2 - (\frac{\kappa l}{n} + \frac{8}{3} \log n)^2$ many independent Rademacher random variables in each $i,j$ position, thus
	 \begin{align}
		 \mathbb{P}\left(|M_{ij} - \mathbb{E}M_{ij}| \geq u \right) \leq 2 \exp \left(- (1 - C \cdot (\kappa n^{-1} + l^{-1} \log n)^2)u^2/2 \right).
	 \end{align}
	 Up to now we have proved that when $i,j$ is a signal node for $M$, then $O^*(1) l^{-1} \geq \mathbb{E} M_{ij} \geq l^{-1}$.
	 Thus we can take sub-Gaussian parameter to be any $\sigma < 1$ because $\kappa n^{-1}, l^{-1} \log n$ are both $o(1)$. The constructed $M(n,k,\lambda/\sigma)$ matrix satisfies the submatrix model with $\lambda/\sigma = l^{-1}$ and sub-Gaussian parameter $\sigma = 1 - o(1)$.  
	 
	 Let us estimate the corresponding $k$ in the submatrix model.  We need to bound the order of the cardinality of $R_{l}$, denoted as $|R_{l}|$. The total number of positions with signal (at least one clique node inside) is $$\mathbb{E} |R_l| = \mathbb{E} |\{ 1\leq i\leq n: i \in R_l \}| = n \left[1-(1 - \kappa/n)^{l} \right].$$ Thus we have the two sided bound
	 $$
	 \kappa l \left(1 - \frac{\kappa l}{2n} \right)  \leq \mathbb{E}|R_l| \leq  \kappa l
	 $$
	 which is of the order $k:=\kappa l$. Let us provide a high probability bound on $|R_l|$. By Bernstein's inequality
	 \begin{align}
	 	\mathbb{P}\left(\left| |R_l| - \mathbb{E} |R_l| \right| > u \right) \leq 2 \exp\left( - \frac{u^2/2}{ \kappa l +u/3} \right).
	 \end{align}
	 Thus if we take $u = \sqrt{4 \kappa l \log n }$, as long as $\log n = o(\kappa l)$,
	 \begin{align}
		\label{eq:bern-trick}
	 	\mathbb{P}\left( \left| |R_l| - \mathbb{E}|R_l| \right| >  \sqrt{4 \kappa l \log n} \right) \leq 2 n^{-1}.
	 \end{align}
	 So with probability at least $1- 2n^{-1}$, the number of positions that contain signal nodes is bounded as
	 \begin{align}
		 \label{bn.eq}
	   \kappa l \left(1 - \frac{\kappa l}{n} \right) \left(1 - \sqrt{\frac{4\log n}{\kappa l}} \right) < |R_l| < \kappa l \left(1 + \sqrt{\frac{4\log n}{\kappa l}} \right) \Rightarrow |R_l| \asymp \kappa l.
	 \end{align} 
	 Equation \eqref{bn.eq} implies that with high probability
	 \begin{align*}
	 \kappa l (1 - o(1))  \leq | R_l | \leq  \kappa l (1+o(1)), \\
	  \kappa l (1 - o(1))  \leq | C_l | \leq  \kappa l (1+o(1)).
	 \end{align*}
	 The above means, in the submatrix parametrization, $k_m \asymp k_n \asymp \kappa l \asymp n^{\alpha}$, $\lambda/\sigma \asymp l^{-1} \asymp n^{-\beta}$, which implies $\kappa \asymp n^{\alpha-\beta}$.
	 
	 Suppose there exists a polynomial time algorithm $\mathcal{A}_M$ that pushes below the computational boundary. In other words,
	 \begin{align}
	n^{-\beta}\asymp \frac{\lambda}{\sigma} \precsim \sqrt{\frac{m+n}{k_m k_n}}  \asymp n^{(1-2\alpha)/2} \Rightarrow \beta > \alpha-\frac{1}{2} 
	 \end{align}
	 with the last inequality having a slack $\epsilon>0$. More precisely, $\mathcal{A}_M$ returns two estimated index sets $\hat{R}_n$ and $\hat{C}_n$ corresponding to the location of the submatrix (and correct with probability going to $1$) under the regime $\beta = \alpha-1/2+\epsilon$. Suppose under some conditions, this algorithm $\mathcal{A}_M$ can be modified to a randomized polynomial time algorithm $\mathcal{A}_{\mathcal{G}}$ that correctly identifies the hidden clique nodes with high probability.
	 It means in the corresponding hidden clique graph $\mathcal{G}(2N, 2\kappa )$, $\mathcal{A}_{\mathcal{G}}$ also pushes below the computational boundary of hidden clique by the amount $\epsilon$:
	 \begin{align}
	 \kappa(N) =  2\kappa \asymp (2n)^{\alpha-\beta} \asymp n^{1/2-\epsilon} \precsim n^{1/2} \asymp N^{\frac{1}{2}}.
	 \end{align}
	 In summary, the quantitative computational lower bound implies that if the computational boundary for submatrix localization is pushed below by an amount $\epsilon$ in the power, the \emph{hidden clique} boundary is correspondingly improved by $\epsilon$. 
	 
	 Now let us show that any algorithm $\mathcal{A}_M$ that localizes the submatrix introduces a randomized algorithm that finds the hidden clique nodes with probability tending to 1. The algorithm relies on the following simple lemma.
	 \begin{lemma}
	 \label{lma:neig}
	 For the hidden clique model $\mathcal{G}(N,\kappa)$, suppose an algorithm provides a candidate set $S$ of size $k$ that contains the true clique subset exactly. If
	 \begin{align*}
	 \kappa \geq C \sqrt{k \log N}
	 \end{align*}
	 then by looking at the adjacency matrix restricted to $S$ we can recover the clique subset exactly with high probability.
	 \end{lemma}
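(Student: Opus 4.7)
The plan is to exploit the degree separation between clique and non-clique vertices once we restrict attention to the candidate set $S$. Write $S = C \cup T$ where $C$ is the true clique of size $\kappa$ and $T = S \setminus C$ consists of non-clique vertices, with $|T| = k - \kappa$. For each $v \in S$, let $d_v$ denote its degree in the induced subgraph on $S$. The key observation is that the expected degrees split cleanly: if $v \in C$, then $v$ is connected to every other vertex of $C$, and to each vertex of $T$ with probability $1/2$ independently, giving $\mathbb{E}[d_v] = (\kappa-1) + (k-\kappa)/2$; if $v \in T$, then every edge incident to $v$ inside $S$ is an independent Bernoulli$(1/2)$, so $\mathbb{E}[d_v] = (k-1)/2$. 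The gap between these two expectations is $(\kappa - 1)/2$, which is of order $\kappa$.

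Next I would apply Hoeffding's inequality vertex by vertex. Conditional on the identity of $C$, each $d_v$ is a sum of at most $k-1$ independent Bernoulli$(1/2)$ random variables (plus a deterministic contribution of $\kappa - 1$ for $v \in C$), so
\begin{align*}
\mathbb{P}\bigl(|d_v - \mathbb{E}[d_v]| \geq u\bigr) \leq 2 \exp\bigl(-2u^2/(k-1)\bigr).
\end{align*}
Taking $u = C_0 \sqrt{k \log N}$ and a union bound over the $k \leq N$ vertices of $S$ shows that with probability at least $1 - 2 N^{-c}$, every $d_v$ lies within $C_0 \sqrt{k \log N}$ of its expectation.

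On this high-probability event, the hypothesis $\kappa \geq C \sqrt{k \log N}$ (with $C$ chosen larger than $4 C_0$) guarantees that $(\kappa-1)/2$ strictly exceeds $2 C_0 \sqrt{k \log N}$, so the degrees of clique and non-clique vertices are perfectly separated. A simple thresholding rule---for instance, return the $\kappa$ vertices of $S$ with the largest degree, or threshold at the midpoint of the two expected values---then recovers $C$ exactly. This estimator runs in $O(k^2)$ time from the restricted adjacency matrix.

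There is no real obstacle here; the only mild subtlety is that in the intended application to Theorem~\ref{CLL.Thm} the candidate set $S$ is produced by a bootstrap-based reduction and the clique identity is also random, but since our bound holds uniformly in the choice of $C$ on the event above, the argument goes through after conditioning on the $\sigma$-field that determines $S$ and $C$, followed by taking expectations. One should also verify that $C_0 \sqrt{k \log N}$ is the correct order of fluctuation even when $\kappa$ is much smaller than $k$; this is automatic from Hoeffding since the variance of $d_v$ is at most $(k-1)/4$ regardless of $v \in C$ or $v \in T$.
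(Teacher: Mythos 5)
Your proof is correct and follows essentially the same route as the paper's: compare row degrees within the candidate set, note the clique/non-clique mean gap of order $\kappa$, apply Hoeffding with a union bound over the at most $N$ vertices to get fluctuations of order $\sqrt{k\log N}$, and threshold. The only cosmetic difference is that you work with the $0/1$ adjacency (gap $(\kappa-1)/2$ around a common mean $\approx k/2$) while the paper uses its $\pm 1$ encoding (non-clique row sums centered at $0$, clique row sums near $\kappa$), which changes nothing substantive.
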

	 The proof of Lemma \ref{lma:neig} is immediate. If $i$ is a clique node, then $\min_{i} \sum_{j \in C} G_{ij} \geq \kappa - C/2\cdot \sqrt{k \log N}$. If $i$ is not a clique node, then $\max_{i} \sum_{j \in C} G_{ij} \leq C/2\cdot \sqrt{k \log N}$. The proof is completed.

	  Algorithm $\mathcal{A}_M$ provides candidate sets $R_l, C_l$ of size $k$, inside which $\kappa$ are correct clique nodes, and thus exact recovery can be completed through Lemma~\ref{lma:neig} since $\kappa \succsim (k\log N)^{1/2}$ (since $\kappa \asymp n^{1/2-\epsilon} \succsim k^{1/2} \asymp n^{\alpha/2}$ when $\epsilon$ is small). The algorithm $\mathcal{A}_{M}$ induces another randomized polynomial time algorithm $\mathcal{A}_{\mathcal{G}}$ that solves the hidden clique problem $\mathcal{G}(2N, 2\kappa)$ with $\kappa \precsim N^{1/2}$. The algorithm $\mathcal{A}_{\mathcal{G}}$ returns the support $\hat{C}_{N,\kappa}$ that coincides with the true support $C_{N,\kappa}$ with probability going to $1$ (a contradiction to the hidden clique hypothesis $\sf HC_{l}$). We conclude that, under the hypothesis, there is no polynomial time algorithm $\mathcal{A}_M$ that can push below the computational boundary $\lambda \precsim \sqrt{\frac{m+n}{k_m k_n}}$.

\end{proof}

\begin{proof}[Proof of Lemma \ref{lma:info-low}]
	The proof of this lemma uses the well-known Fano's information inequality, namely Lemma \ref{lma:fano}. We have that $X = M + Z$, where $M \in \mathbb{R}^{m \times n}$ is the mean matrix. Under the Gaussian noise with parameter $\sigma$, the probability model is
\begin{align}
\mathbb{P}(X|M) \propto \exp\left( - \langle X - M, X - M \rangle /2 \sigma^2 \right)
\end{align}
where $M = \lambda \cdot U V^T \in \Theta$. The parameter space $\Theta$ is composed of all $M = \lambda \cdot U V^T$ where $U$ are sampled uniformly on the collection of vectors with $k_m$ ones and other coordinates being zero, and similarly $V$ are sampled uniformly with $k_n$ ones and the rest zero. The cardinality of the parameter space is 
$$
{\sf Card}(\Theta) = \dbinom{m}{k_m} \dbinom{n}{k_n},
$$
corresponding to that many probability measures on the same probability space. Put a uniform prior on this parameter space and invoke Fano's lemma \ref{lma:fano}. To obtain the lower bound, we need to upper bound the Kullback-Leibler divergence $\KL(\mc{P}_{M} || \bar{\mc{P}})$ for any $M \in \Theta$, where
$$
\bar{\mc{P}}  = \mathbb{E}_{M' \sim {\sf unif}(\Theta)} \mc{P}_{M'}.
$$
For any $M \in \Theta$,
\begin{align*}
    \KL(\mc{P}_{M} || \bar{\mc{P}}) & = \mathbb{E}_{\mc{P}_{M}} \log \frac{\mc{P}_{M}}{\bar{\mc{P}}} \\
                                      & = \mathbb{E}_{\mc{P}_{M}} \log \frac{\mc{P}_{M}}{ \mathbb{E}_{M' \sim {\sf unif}(\Theta)} \mc{P}_{M'}} \\
                                      & \leq \mathbb{E}_{X \sim \mc{P}_{M}} \left\{ -\frac{\langle X - M, X - M \rangle}{2\sigma^2} + \frac{1}{{\sf Card}(\Theta)} \sum _{M' \in \Theta} \frac{\langle X - M', X - M' \rangle}{2\sigma^2} \right\} \\
                                      & = \mathbb{E}_{X \sim \mc{P}_{M}} \left\{ \frac{1}{{\sf Card}(\Theta)} \sum _{M' \in \Theta} 2 \frac{\langle M - M', X - M \rangle}{2\sigma^2} + \frac{1}{{\sf Card}(\Theta)} \sum _{M' \in \Theta} \frac{\langle M - M', M - M' \rangle}{2\sigma^2} \right\} \\
                                      & = \frac{1}{{\sf Card}(\Theta)} \sum _{M' \in \Theta} \frac{\langle M - M', M - M' \rangle}{2\sigma^2} \\
                                      & = \frac{ \langle M,M \rangle + \frac{1}{{\sf Card}(\Theta)} \sum _{M' \in \Theta}  \langle M',M' \rangle - 2\frac{1}{{\sf Card}(\Theta)} \sum _{M' \in \Theta} \langle M,M' \rangle}{2\sigma^2} \\
                                      & = \frac{\lambda^2 k_m k_n}{\sigma^2} (1 - \frac{k_m k_n}{mn}).
\end{align*}
Thus as long as
\begin{align}
\frac{\lambda^2 k_m k_n}{\sigma^2} (1 - \frac{k_m k_n}{mn}) \leq \alpha \log \left[ \dbinom{m}{k_m} \dbinom{n}{k_n} \right]
\end{align}
we have
\begin{align}
	\frac{1}{{\sf Card}(\Theta)} \sum_{M \in \Theta} \KL(\mc{P}_{M} || \bar{\mc{P}}) \leq \alpha \log ({\sf Card}(\Theta)).
\end{align}
Invoke the simple bound on binomial coefficients $\left(\frac{n}{k}\right)^k \leq \binom{n}{k} \leq \left(\frac{ne}{k}\right)^k$. If we choose
\begin{align}
\lambda \leq  C_{\alpha} \cdot  \sigma \sqrt{\frac{k_m \log \frac{m}{k_m} + k_n \log \frac{n}{k_n}}{k_m k_n}}
\end{align}
then the condition \eqref{Fano.Cnd} holds. Any submatrix localization algorithm translates into a multiple testing procedure that picks a parameter $M' \in \Theta$. By Fano's information inequality \ref{lma:fano}, the minimax error, which is also the localization error, is at least $1 - \alpha - \frac{\log 2}{k_m \log(m/k_m) + k_n \log (n/k_n)}$.
\end{proof}

\begin{proof}[Proof of Lemma \ref{lma:stat-alg}]
	Recall the definition \ref{SubG.Def} of a sub-Gaussian random variable. Taking $Z_{ij}, i\in I, j \in J$, we have the following concentration from the Chernoff's bound for $\sum_{i\in I, j\in J} Z_{ij}$
	\begin{align}
		\mathbb{E} e^{\lambda \sum_{i\in I, j\in J} Z_{ij}} \leq \exp\left(|I||J| \cdot \sigma^2 \lambda^2/2c \right) 
	\end{align}
	and
	\begin{align}
		\mathbb{P}\left( |\sum_{i\in I, j\in J} Z_{ij}| \geq \sqrt{|I||J|} \sigma t \right) \leq 2 \exp( - c \cdot t^2/2).
	\end{align}
	There are in total
	$$
	\binom{m}{k_m} \binom{n}{k_n}
	$$
	such submatrices, so by a union bound, we have
	\begin{align*}
		\mbb{P}\left( \max_{|I|=R_m, |J| = C_n} |\sum_{i\in I, j\in J} Z_{ij}| \geq \sqrt{|I||J|} \sigma t \right) 
		& \leq \binom{m}{k_m} \binom{n}{k_n} \cdot 2  \exp( - c \cdot t^2/2) \\
		& \leq  2 \exp \left(k_m \log(e m/k_m) + k_n \log (e n/k_n) - c t^2/2\right).                      \end{align*}
	If we take $t = 2 \sqrt{\frac{k_m \log(e m/k_m) + k_n \log (e n/k_n) }{c}}$, then with probability at least 
	$$
	1 - 2 \left( \binom{m}{k_m} \binom{n}{k_n} \right)^{-1}
	$$
	we have
	\begin{align}
		\max_{|I|=R_m, |J| = C_n} |\sum_{i\in I, j\in J} Z_{ij}| \leq \frac{2}{\sqrt{c}} \sqrt{|I||J|} \sigma \sqrt{ k_m \log(e m/k_m) + k_n \log (e n/k_n) }.
	\end{align}
	Thus if 
	\begin{align}
	    k_m k_n \lambda > 2 \max_{|I|=k_m, |J| = k_n} |\sum_{i\in I, j\in J} Z_{ij}|
	\end{align}
	then the maximum submatrix is unique and is the true one. Recollecting terms, we reach
	\begin{align}
		\label{mono1.eq}
		\lambda > \frac{4}{\sqrt{c}} \cdot \sigma \sqrt{\frac{\log (em/k_m)}{k_n}+\frac{\log (en/k_n)}{k_m}}.
	\end{align}
	
	To make the proof fully rigorous, we need the following monotonicity trick. Consider the submatrix of size $k_m k_n$ with $a$ rows to be in the correct set $R_m$ and $b$ columns to be in the correct set $C_n$, where $a < k_m$ and $b < k_n$. The cardinality of the set of such matrices is
	$$
	\binom{m-a}{k_m-a}  \binom{n-a}{k_n-b}.
	$$ 
	Using the same calculation as before we want
	\begin{align}
		 \label{mono2.eq}
	     \lambda > \frac{4}{\sqrt{c}} \cdot \sigma \sqrt{\frac{(k_m-a) \log (e(m-a)/(k_m-a)) + (k_n-b) \log (e(n-b)/(k_n-b))}{k_m k_n-ab}}.
	\end{align}
	By simple algebra,
	\begin{align}
		\frac{k_m-a}{k_m k_n -ab} & < \frac{1}{k_n} \\
		\frac{k_n-b}{k_m k_n -ab} & < \frac{1}{k_m} \\
		 \log (e(m-a)/(k_m-a)) & < \log (em) \\
		 \log (e(n-b)/(k_n-b)) & < \log (en).
	\end{align}
	Hence, if equation \eqref{mono1.eq} is satisfied, \eqref{mono2.eq} is satisfied up to a universal constant for all $a < k_m$ and $b < k_n$. Thus we have proved that if
	$$
	\lambda \geq C \cdot \sigma \sqrt{\frac{ \log(em/k_m) }{k_n} + \frac{ \log (en/k_n)}{k_m}}
	$$
	with a suitable constant $C$, the statistical search algorithm picks out the correct submatrix. The sum of the probabilities of the bad events is bounded by
	$$
	\sum_{0\leq a<k_m,0\leq b<k_n} \left( \binom{m-a}{k_m-a} \binom{n-b}{k_n-b} \right)^{-1} \leq \frac{k_m k_n}{(m-k_m)(n-k_n)}.
	$$
	For the multiple non-overlapping submatrices case, as long as 
	$$
	m - \sum_{s=1}^r k_s^{(m)} \asymp m \quad n - \sum_{s = 1}^r k_s^{(n)} \asymp n
	$$
	then sequential application of Algorithm~\ref{SS.Alg} will find the $r$-submatrices. 
	
\end{proof}

\begin{proof}[Proof of Lemma \ref{lma:sp-ext} for Multiple Non-overlapping Submatrices Case]
We are going to provide theoretical justification to the extension of the submatrix localization algorithm to multiple non-overlapping submatrices case as in Algorithm \ref{SVD2.Alg}. Write out the matrix form of the submatrix model, with the SVD version of the signal matrix $M$
$$
X =  M + Z = U \Lambda V^T + Z.
$$
Due to the non-overlapping property, we have $1 \leq s \neq t \leq r$, $1_{R_s}^T 1_{R_t} = 0$, so as to $C_n$. The singular values of $U \Lambda V^T$ are $\lambda_s \sqrt{k_s^{(m)} k_s^{(n)}}, 1\leq s \leq r$, and all the other singular values are $0$.

Let us apply the Davis-Kahan-Wedin bound to $X =U \Lambda V^T +Z$. Denote the top $r$ left and right singular vector of $X$ as $\tilde{U}$ and $\tilde{V}$. Using Weyl's interlacing inequality, we have
$$
| \sigma_s(X)  - \sigma_s(M) | \leq \| Z \|_2
$$
and
$$
\sigma_s(X) \geq \lambda_s \sqrt{k_s^{(m)} k_s^{(n)}}  - \| Z \|_{2}, ~~ 1\leq s \leq r ;
$$
$$
\sigma_t(X) \leq \| Z \|_{2}, ~~ t > r.
$$
Thus applying Lemma \ref{lma:dkw}, we have
\begin{align*}
\max \left\{ |\sin \angle (U, \tilde{U}) |, |\sin \angle (V, \tilde{V})| \right\}  &\leq \frac{C \sigma (\sqrt{m}+\sqrt{n})}{\min\limits_{1\leq s \leq r} \lambda_s \sqrt{k_s^{(m)} k_s^{(n)}} -  C \sigma (\sqrt{m}+ \sqrt{n})} \\
&\asymp \frac{\sigma (\sqrt{m}+\sqrt{n})}{\min\limits_{1\leq s \leq r} \lambda_s \sqrt{k_s^{(m)} k_s^{(n)}}}.
\end{align*}
According to the definition of the canonical angles, we have
$$
\max \left\{ \| U U^T - \tilde{U} \tilde{U}^T \|_{2}, \| V V^T - \tilde{V} \tilde{V}^T \|_{2} \right\} \leq C  \cdot \frac{\sigma (\sqrt{m}+\sqrt{n})}{\min\limits_{1\leq s \leq r} \lambda_s \sqrt{k_s^{(m)} k_s^{(n)}}}.$$

Now let us assume we have two observation of $X$. We use the first observation $\tilde{X}$ to solve for the singular vectors $\tilde{U},\tilde{V}$, we use the second observation $X$ to project the to the singular vectors $\tilde{U},\tilde{V}$. 
Recall $X = M + Z =  U \Lambda V^T +Z$. For $1 \leq j \leq n$
\begin{align}
\| \mathcal{P}_{\tilde{U}} X_{\cdot j}  - M_{\cdot j} \|_{\ell_2} \leq \| \mathcal{P}_{\tilde{U}} (X_{\cdot j}  - M_{\cdot j}) \|_{\ell_2} + \| (\mathcal{P}_{\tilde{U}} - I) M_{\cdot j} \|_{\ell_2}
\end{align}

For the first term of \eqref{sp.eq} because $X_{\cdot j} - M_{\cdot j} = Z_{\cdot j} \in \mathbb{R}^m$ is an i.i.d. isotropic sub-Gaussian vector, we have through Lemma \ref{Proj.Lma}, for $t = (1+1/c) \log n$, $Z_{\cdot j} \in \mathbb{R}^m, 1\leq j\leq n$ and $r >0$
\begin{align}
	\mathbb{P} \left(\| \mathcal{P}_{\tilde{U}} (X_{\cdot j}  - M_{\cdot j}) \|_{\ell_2} \geq \sigma \sqrt{r} \sqrt{1+ 2 \sqrt{1+1/c}\cdot \sqrt{\frac{\log n}{r}} + 2 (1+1/c) \cdot \frac{\log n}{r}} \right) \leq n^{-c-1}.
\end{align}
Thus invoke the union bound for all $1\leq j \leq n$
\begin{align}
\max_{1\leq j \leq n} \| \mathcal{P}_{\tilde{U}} (X_{\cdot j}  - M_{\cdot j}) \|_{\ell_2} 
& \leq  \sigma \sqrt{r} + \sqrt{1+1/c} \cdot \sigma \sqrt{\log n} + (1+1/c) \frac{\log n}{\sqrt{r}} \\
& \leq \sigma \sqrt{r} + C \cdot \sigma \sqrt{\log n}
\end{align}
with probability at least $1 - n^{-c}$.

For the second term $M_{ \cdot j} = \tilde{X}_{ \cdot j} - \tilde{Z}_{ \cdot j}$ of \eqref{sp.eq}. It can be estimated through perturbation Sin Theta Theorem \ref{lma:dkw}. Basically it is 
\begin{align}
\| (\mathcal{P}_{\tilde{U}} - I) M_{\cdot j} \|_{\ell_2} & = \|(\mathcal{P}_{\tilde{U}} - \mathcal{P}_U) M_{\cdot j}  \|_{\ell_2} \leq \|\tilde{U} \tilde{U}^T - U U^T \|_2 \cdot \| M_{\cdot j} \|_{\ell_2} \\
& \leq  C \frac{\sigma \sqrt{m+n}}{\min\limits_{1\leq s \leq r} \lambda_s \sqrt{k_s^{(m)} k_s^{(n)}}}  \max\limits_{1 \leq s \leq r}\lambda_s \sqrt{k_s^{(m)}}.
\end{align}

Combining all the above, we have with probability at least $1 - n^{-c} - m^{-c}$, for all $1 \leq j \leq n$
\begin{align}
\| \mathcal{P}_{\tilde{U}} X_{\cdot j}  - M_{\cdot j} \|_{\ell_2} \leq C \cdot \left(\sigma \sqrt{r} + \sigma \sqrt{\log n}+ \sigma \sqrt{m \vee n} \cdot \frac{\max\limits_{1 \leq s \leq r}\lambda_s \sqrt{k_s^{(m)}}}{\min\limits_{1\leq s \leq r} \lambda_s \sqrt{k_s^{(m)} k_s^{(n)}}} \right).
\end{align}
Similarly we have for all $1\leq i \leq m$, 
\begin{align}
\| \mathcal{P}_{\tilde{V}} X^T_{i \cdot }  - M^T_{i \cdot } \|_{\ell_2} \leq C \cdot \left(\sigma \sqrt{r} + \sigma \sqrt{\log m}+ \sigma \sqrt{m \vee n} \cdot \frac{\max\limits_{1 \leq s \leq r}\lambda_s \sqrt{k_s^{(n)}}}{\min\limits_{1\leq s \leq r} \lambda_s \sqrt{k_s^{(m)} k_s^{(n)}}}  \right)
\end{align}
Clearly we know for any $1\leq s\leq r$ and $i \in R_s$ and $i' \in [m] \backslash R_s$
$$
\|  M^T_{i \cdot}-M^T_{i' \cdot} \|_{\ell_2}  \geq  \min\limits_{1\leq s\leq r} \lambda_s \sqrt{k_s^{(n)}}
$$
and for any $1\leq s\leq r$ and $j \in C_s$ and $j' \in [n] \backslash C_s$
$$
\|  M_{ \cdot j}-M_{ \cdot j'} \|_{\ell_2}  \geq  \min\limits_{1\leq s\leq r} \lambda_s \sqrt{k_s^{(m)}}.
$$
 
Thus if $k_s^{(m)} \asymp k_m, k_s^{(n)} \asymp k_n$ and $\lambda_s \asymp \lambda$ for all $1\leq s \leq r$
\begin{align}
\lambda \sqrt{k_m} \geq 6C \cdot \left( \sigma \sqrt{r} + \sigma \sqrt{\log n}+ \sigma \sqrt{\frac{m \vee n}{ k_n}} \right) \\
\lambda \sqrt{k_n} \geq 6C \cdot \left( \sigma \sqrt{r} + \sigma \sqrt{\log m}+ \sigma \sqrt{\frac{m \vee n}{ k_m}} \right)
\end{align}
We have learned a metric $d$ (of intrinsic dimension $r$) such that under this metric, data forms into clusters in the sense that 
$$
2 \max_{i,i'\in R_s} |d_i - d_{i'}| \leq \min_{i \in R_m, i' \in [m] \backslash R_s} |d_i - d_{i'}|.
$$
Thus it satisfies the geometric separation property.

Thus in summary if
$$
\lambda \geq C \cdot \sigma \left( \sqrt{\frac{r}{k_m \wedge k_n}} +  \sqrt{\frac{\log n}{k_m }}\vee \sqrt{\frac{\log m}{k_n}} +  \sqrt{\frac{m\vee n}{k_m k_n}} \right),
$$
the spectral algorithm succeeds with probability at least $$1 - m^{-c} - n^{-c} - 2 \exp\left(-c(m+n)\right).$$
Due to the fact that 
$$
\sqrt{\frac{r}{k_m \wedge k_n}} \precsim \sqrt{\frac{m\vee n}{k_m k_n}} 
$$
because $r k_m \precsim m, ~r k_n \precsim n$ in most cases, the first term does not have an effect in. most cases.
\end{proof}

Proof of Theorem~\ref{thm:comp-bdr} is a direct result of Lemma~\ref{lma:sp-alg} and Theorem~\ref{CLL.Thm}.
Proof of Theorem~\ref{thm:comp-bdr-sparse} is obvious based on Lemma~\ref{lma:thres-sp-alg} and the hidden clique hypothesis $\sf HC_{l}$. 
Proof of Theorem~\ref{thm:stat-bdr} combines the result of Lemma~\ref{lma:stat-alg} and Lemma~\ref{lma:info-low}.

\bibliographystyle{apalike}
\bibliography{bibfile}
\newpage

\begin{appendix}
\section{Convex Relaxation Algorithm }
\label{sec:conv}
In this section we will investigate a convex relaxation approach to the problem. The same algorithm has also been investigated in a parallel work of \cite{chen2014statistical}. Our analysis is slightly different, with the explicit construction of the dual certificate using the idea in \cite{gross2011recovering}. For the purposes of comparing to the spectral approach, we include the convex relaxation analysis in this section. Let us write the optimization problem
\begin{align*}
\min_{u \in \mathbb{R}^m,v \in \mathbb{R}^n} & \quad \| X - \lambda u v^T \|_F^2 \\
{\sf s.t.}                                   & \quad  \| u \|_{\ell_0} = k_m, \| v \|_{\ell_0} = k_n \\
                                             & \quad u \in \{ 0 , 1 \}^m, v \in \{ 0, 1 \}^n.
\end{align*}
This problem is non-convex: the feasibility set is non-convex, and so is the optimization function (although it is bi-convex). However, we can relax the problem and transform it into a convex optimization problem. Of course, we need to ensure that the solution to the relaxed problem is the exact solution (with high probability) under appropriate conditions.

The matrix version of the submatrix problem suggests that the signal matrix is of the structure ``low rank and sparsity on the singular vectors.'' We recall from the low rank matrix recovery literature, see e.g.  \cite{candes2011tight} and \cite{cai2014geometrizing}, that we can utilize the low rank structure and solve  relaxed versions as follows.

\paragraph{Relaxation 1}
Consider the constraint minimization relaxation,
\begin{align*}
\min_{M \in \mathbb{R}^{m\times n}} & \quad \| M \|_* \\
{\sf s.t.}                          & \quad  \| X - \lambda M \|_2 \leq  C \cdot \sigma (\sqrt{m}+ \sqrt{n}).
\end{align*}
Unfortunately, Relaxation 1 is only good in terms of estimation of the whole matrix. The stronger objective of localization requires simultaneous exploitation of sparsity and low rank-ness, as in Relaxation 2.

\paragraph{Relaxation 2}

Let us expand the objective of the original non-convex optimization problem, drop the quadratic term to make the procedure adaptive in terms of $\lambda$, and convexify the feasibility set at the same time.

\begin{algorithm}[H]
%\SetAlgoLined
\KwIn{$X \in \mbb{R}^{m \times n}$ the data matrix. Size of the submatrix $k_m \times k_n$.}
\KwOut{A subset of the row indexes $\hat{R}_m$ and a subset of column indices $\hat{C}_n$ as the localization sets of the submatrix.}
    1. Solve the following convex optimization problem
	\begin{align*}
		\hat{M}_0 = \argmin{M \in \mathbb{R}^{m \times n}} & \quad - \langle X, M \rangle \\
                                                 \text{s.t.} & \quad  0 \leq M \leq 1_m 1_n^T \\
                                                            & \quad \| M \|_* \leq 1 \\
                                                            & \quad \langle M, 1_m 1_n^T \rangle  = k_m k_n
	\end{align*}
	2. Perform SVD on $\hat{M}_0$, denote the set of the non-zero entries on the top left singular vector to be $\hat{R}_m$ and the set of the non-zero entries on the top right singular vector to be $\hat{C}_n$.
\caption{Convex Relaxation Algorithm}
\label{CR1.Alg}
\end{algorithm}

The time complexity to solve this convex optimization problem is at least $\Theta((m+n)^3)$ implemented with alternating direction methods of multipliers (ADMM). The disadvantage is that the theoretical guarantee only holds for the exact solution $\hat{X}_0$; however, in reality we can only approximately find $\hat{X}_0$ through ADMM or some other optimization methods. We also remark that this algorithm requires the prior knowledge of the submatrix size $k_m, k_n$, which means it is not fully adaptive.

\begin{lemma}[Guarantee for Relaxation Algorithm]
	\label{lma:cv-rlx}
	Consider the submatrix model \eqref{submat.Mat.Frm} and the Algorithm \ref{CR1.Alg}. There exists a universal $C>0$ such that when 
	$$
	\frac{\lambda}{\sigma} \geq C \cdot \left( \sqrt{\frac{m \vee n}{k_m k_n}} + \sqrt{\frac{\log n}{k_m} \vee \frac{\log m}{k_n}} \right),
	$$
	the convex relaxation succeeds (in the sense that $\hat{R}_m = R_m, \hat{C}_n = C_n$) with probability at least $1 - 2 m^{-c} - 2 n^{-c} - 2 (mn)^{-c} - 2 \exp\left(-c(m+n)\right)$.
\end{lemma}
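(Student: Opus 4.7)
My plan is to prove Lemma~\ref{lma:cv-rlx} by an explicit dual-certificate / KKT argument, which is the standard recipe for exact recovery in convex relaxations of low-rank plus sparsity problems. The natural candidate for the optimizer is the indicator matrix $M^* = 1_{R_m}1_{C_n}^{T}$ (up to the normalization suggested by the three constraints), for which feasibility is immediate: $M^*\in[0,1]^{m\times n}$, $\langle M^*, 1_m 1_n^T\rangle = k_m k_n$, and $\|M^*\|_* = \sqrt{k_m k_n}$. Writing the Lagrangian with multipliers $\nu\in\mathbb{R}$ for the trace equality, $\mu\ge 0$ for the nuclear-norm constraint, and elementwise $\Psi_+,\Psi_-\ge 0$ for the box constraints, the KKT stationarity condition reduces to producing $G \in \partial \|M^*\|_*$ and nonnegative slack matrices with appropriate complementary slackness so that
\begin{equation*}
X \;=\; \nu\, 1_m 1_n^{T} + \mu\, G + \Psi_+ - \Psi_-,
\end{equation*}
together with the dual feasibility $\|P_{T^\perp}(G)\|_2\le 1$ in the subdifferential description $\partial\|M^*\|_* = \{u^*(v^*)^{T} + P_{T^\perp}(W): \|W\|_2\le 1\}$, where $u^* = 1_{R_m}/\sqrt{k_m}$, $v^* = 1_{C_n}/\sqrt{k_n}$, and $T$ is the tangent space to rank-one matrices at $u^*(v^*)^{T}$.

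Next I would construct the certificate block by block along the partition $R_m\times C_n$, $R_m\times C_n^{c}$, $R_m^{c}\times C_n$, $R_m^{c}\times C_n^{c}$. On $R_m\times C_n$ set $\mu \asymp \lambda\sqrt{k_m k_n}$ so that $\mu\, u^*(v^*)^{T}$ matches the mean signal $\lambda$; absorb the entrywise noise into $\Psi_+$ (when $Z_{ij} < 0$) and $\Psi_-$ (when $Z_{ij}>0$), which only requires $\lambda \gtrsim \max_{ij\in R_m\times C_n}|Z_{ij}| \asymp \sigma\sqrt{\log(k_m k_n)}$. On the off-diagonal blocks $R_m\times C_n^{c}$ and $R_m^{c}\times C_n$ the signal is zero; I would pick the scalar $\nu$ to center the bulk and then use $\Psi_{\pm}$ to absorb the entrywise fluctuations, which costs $\sigma\sqrt{\log(mn)}$. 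Finally, on $R_m^{c}\times C_n^{c}$ the only unused degree of freedom is $P_{T^\perp}(W)$; setting $\mu\, P_{T^\perp}(W) = Z_{R_m^c \times C_n^c} - \nu\cdot 1 - (\Psi_+ - \Psi_-)$ on that block and zero on $T$ gives a well-defined subgradient extension.

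Verifying dual feasibility then reduces to two types of tail bounds, both already available: the operator-norm control $\|Z\|_2 \le C\sigma(\sqrt{m}+\sqrt{n})$ from Lemma~\ref{lma:rmt}, which via $\|P_{T^\perp}(W)\|_2 \le \|Z\|_2/\mu \asymp \sigma(\sqrt{m}+\sqrt{n})/(\lambda\sqrt{k_m k_n})$ forces the requirement $\lambda/\sigma \gtrsim \sqrt{(m\vee n)/(k_m k_n)}$; and the entrywise/row-wise sub-Gaussian bounds giving $\max_{i,j}|Z_{ij}|\lesssim \sigma\sqrt{\log(mn)}$ and the stronger partial-row sums $\max_i |\sum_{j\in C_n} Z_{ij}| \lesssim \sigma\sqrt{k_n\log m}$ (and its transpose), which when matched against the signal gap $\lambda k_n$ across rows and $\lambda k_m$ across columns give the second term $\sqrt{\log n/k_m \vee \log m/k_n}$. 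Combining these two regimes produces the announced threshold. For uniqueness of $M^*$, I would lower bound $\langle X - \nu 1_m 1_n^T - \mu G, M-M^*\rangle$ by using the strict complementary slackness of $\Psi_\pm$ away from the signal region together with the strict inequality $\|P_{T^\perp}(W)\|_2 < 1$ that the above bounds actually deliver, making the KKT certificate strictly interior in the nuclear-norm slot.

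The main obstacle I anticipate is the joint construction of $\Psi_\pm$, $\nu$, and $W$ so that all three constraints (pointwise nonnegativity of $\Psi_\pm$, spectral norm $\le 1$ for $W$, and exact agreement of the KKT equation in every block) hold simultaneously; the $T$/$T^\perp$ decomposition couples the off-diagonal blocks to the signal block through $u^*(v^*)^T$, so one cannot naively set $W$ block by block without first subtracting the rank-one piece that leaks out of $R_m\times C_n$. I would handle this by first writing $Z$ in the tangent-space decomposition and absorbing its $T$-component entirely into $\Psi_\pm$ on $R_m\times C_n$ (which is possible precisely because of the entrywise box constraints), then defining $W$ on $T^\perp$ alone; this cleanly separates the spectral bound from the pointwise bound and reduces the argument to the two concentration inequalities above.
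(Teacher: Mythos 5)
Your overall strategy---taking the indicator matrix $1_{R_m}1_{C_n}^T$ as the primal certificate and certifying it through the KKT system with multipliers for the box, nuclear-norm and trace constraints, then reducing dual feasibility to the operator-norm bound of Lemma~\ref{lma:rmt} plus sub-Gaussian maxima---is the same as the paper's. But the concrete certificate you describe has a genuine gap. First, complementary slackness dictates which box multiplier is available on each block: on $R_m\times C_n$ only the multiplier of $M\le 1$ may be nonzero, and on its complement only the multiplier of $M\ge 0$; so you cannot ``absorb the entrywise noise into $\Psi_+$ or $\Psi_-$'' according to the sign of $Z_{ij}$ --- noise of the unfavorable sign must be dominated by the slack actually available there ($\lambda$ on the signal block, $\nu$ off it). Second, and more seriously, your choice of $W$ (supported on $R_m^c\times C_n^c$, ``zero on $T$,'' after trying to push $\mathcal{P}_T(Z)$ into $\Psi_\pm$) leaves the \emph{full entrywise} noise to be dominated on $R_m\times C_n$ and on the two off-diagonal blocks. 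That forces $\nu \succsim \sigma\sqrt{\log(mn)}$ and $\lambda \succsim \nu + \sigma\sqrt{\log(k_mk_n)}$, i.e.\ a threshold of order $\sqrt{(m\vee n)/(k_mk_n)} + \sqrt{\log(mn)}$, strictly weaker than the lemma's $\sqrt{(m\vee n)/(k_mk_n)} + \sqrt{\log n/k_m \vee \log m/k_n}$ (the latter tends to zero when, say, $k_m\asymp k_n\asymp n^{3/4}$, while $\sqrt{\log(mn)}$ does not). The partial row/column-sum bounds you invoke to ``give the second term'' cannot be used in your construction, because nothing in it reduces the pointwise sign conditions to averages of $Z$ over $R_m$ or $C_n$. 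Note also that $\mathcal{P}_T(Z)$ is not supported on $R_m\times C_n$ --- it lives on all of $(R_m\times[n])\cup([m]\times C_n)$ --- so it cannot be absorbed into $\Psi_\pm$ on the signal block in the first place.

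The missing idea is to let the spectral slot of the subgradient cancel the noise \emph{everywhere}, not only on $R_m^c\times C_n^c$: take $W=\frac{1}{\mu^*}\mathcal{P}_{T^\perp}(Z)$ globally with $\mu^*\asymp\sigma(\sqrt m+\sqrt n)$, so that $\|W\|_2\le\|Z\|_2/\mu^*\le 1$ by Lemma~\ref{lma:rmt}. The stationarity equation then leaves only $\lambda 1_{R_m}1_{C_n}^T+\mathcal{P}_{T}(Z)-\mu^*(k_mk_n)^{-1/2}1_{R_m}1_{C_n}^T-\nu^*1_m1_n^T$ to be signed entrywise, and every entry of $\mathcal{P}_{T}(Z)$ is a row or column average of $Z$ over $R_m$ or $C_n$, uniformly of size $\sigma\bigl(\sqrt{\log m/k_n}+\sqrt{\log n/k_m}\bigr)$. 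Choosing $\nu^*$ at exactly this scale and requiring $\lambda\ge C\sigma\bigl(\sqrt{(m\vee n)/(k_mk_n)}+\sqrt{\log n/k_m}+\sqrt{\log m/k_n}\bigr)$ yields nonnegativity of $\Theta^*$ on the block and of $\Delta^*$ off it with the stated probability; this is precisely how the paper closes the argument, and your uniqueness step (strict positivity of the block multipliers plus the variational inequality) then goes through as written.
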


\begin{proof}[Proof of Lemma \ref{lma:cv-rlx}]
Let us construct the dual certificate to secure that the true solution is the unique solution. If we can construct a pair of a primal certificate $M^* = \sqrt{k_m k_n} U  V^T$  and dual certificate $\Delta^*, \Theta^* , \mu^*, \nu^*$ that satisfy 
\begin{align}
\label{dc1.eq}     X = -\Delta^* + \Theta^* + \mu^* (U V^T + W) + \nu^* 1_m 1_n^T \\
\label{cs.eq} \Delta^*_{ij} M^*_{ij} = 0, \Theta^*_{ij} (1  - M^*_{ij}) = 0, ~~ 1\leq i\leq m, 1\leq j               \leq n \\
\label{dc2.eq} \mu^* > 0 .
\end{align}
Here $W \in \mathcal{P}_{U^\perp \cap V^\perp}, \| W \|_2 \leq 1$, and $U V^T + W$ denotes the sub differential of $\| \cdot \|_*$ evaluated at $M^*$
$$
\partial|_{M = M^*} \| M \|_* = UV^T + W .
$$
Equation \eqref{cs.eq} is equivalent to 
\begin{align}
 \Delta^*_{ij} > 0,~~ \Theta^*_{ij} = 0, ~~ i \notin R_m ~\cup~ j \notin C_n \\
 \Theta^*_{ij} > 0,~~ \Delta^*_{ij} = 0, ~~ i \in R_m ~\cap~ j\in C_n .
\end{align}
We claim that any solution $\hat{M}$ to Relaxation 2 must satisfy $\hat{M} = M^*$. If not, 
we write $\hat{M} = M^* + H$ and find that 
\begin{align}
\label{cd.eq} 0 \leq M^* + H \leq 1_m 1_n^T \\
\| M^* + H \|_* \leq \| M^* \|_* \Rightarrow \langle H,  UV^T+W  \rangle \leq 0 \\
\langle H, 1_m 1_n^T \rangle = 0.
\end{align}
All of the above equations are due to the primal feasibility, and the second inequality also uses the convexity of $\| \cdot \|_*$. Note that \eqref{cd.eq} can be written in a more explicit form
\begin{align}
 0 \leq H_{ij} \leq 1, ~~ i \notin R_m ~\cup~ j \notin C_n \\
 -1 \leq H_{ij} \leq 0, ~~ i \in R_m ~\cap~ j\in C_n. 
\end{align}
Due to the optimality of $\hat{M}$ in terms of the objective function
$$
\langle X, M^* +H \rangle \geq \langle X, M^*  \rangle
$$
which means 
\begin{align}
0 \leq & \langle X, H \rangle  \\
 = & \langle -\Delta^* + \Theta^* + \mu^* (U V^T + W) + \nu^* 1_m 1_n^T  ,H \rangle  \\
 \leq  & \langle -\Delta^* + \Theta^*   , H \rangle .
\end{align}
We can see that if $H_{ij}>0$, then we must have $M^*_{ij}=0$, which through complimentary slackness implies  $\Theta^*_{ij} = 0 $. This in turn means $ - \Delta_{ij} < 0$. When $H_{ij}<0$, we must have $M^*_{ij} = 1$, which again means $\Delta_{ij} = 0$, $\Theta^*_{ij} > 0$, a contradiction. Thus $H_{ij} =0$ for all $i,j$.

The properties we impose on the dual certificates are motivated from the Karush-Kuhn-Tucker (KKT) conditions.
Introduce the dual variables $\Delta,\Theta \in \mathbb{R}_+^{m \times n}$, $\mu \in \mathbb{R}_+, \nu \in \mathbb{R}$ for the four feasibility conditions. Then the Lagrangian is
\begin{align*}
\mathcal{L}(M,\Delta,\Theta,\mu,\nu) = - \langle X, M \rangle  - \langle \Delta, M \rangle + \langle \Theta, M -1_m 1_n^T \rangle + \mu (\| M \|_* -1   )+ \nu \left(\langle M, 1_m 1_n^T \rangle  - k_m k_n\right).
\end{align*}
The associated KKT conditions are
\begin{align}
\label{kkt.eq}
X = -\Delta + \Theta + \mu (U_M V_M^T + W_M) + \nu 1_m 1_n^T \\
0 \leq M \leq 1_m 1_n^T \\
 \| M \|_* \leq 1 \\
 \langle M, 1_m 1_n^T \rangle = k_m k_n \\
 \Delta \geq 0, \Theta \geq 0, \mu \geq 0 \\
 \Delta_{ij} M_{ij} = 0, \Theta_{ij} (1  - M_{ij}) = 0.
\end{align}

Now let us see how to construct the dual certificate $-\Delta^* + \Theta^*$, $\mu^*, \nu^*$ to satisfy the conditions \eqref{dc1.eq} - \eqref{dc2.eq}. Expand \eqref{kkt.eq} as
\begin{align}
	\label{kkt2.eq}
 -\Delta^* + \Theta^* = \lambda 1_{R_m} 1_{C_n}^T + \mathcal{P}_{U \cup V}(Z) + \mathcal{P}_{U^T \cap V^T}(Z) -\mu^* \left(\frac{1}{\sqrt{k_m k_n}} 1_{R_m} 1_{C_n}^T + W \right) - \nu^* 1_m 1_n^T
\end{align}
where
\begin{align}
\mathcal{P}_{U^T \cap V^T}(Z) = Z - \mathcal{P}_{U \cup T}(Z).
\end{align}
Choose
\begin{align}
W = \frac{1}{\mu^*} \mathcal{P}_{U^T \cap V^T} (Z).
\end{align}
Thus if we choose $\mu^* \geq  C \cdot \sigma(\sqrt{m}+\sqrt{n})$, it holds that
\begin{align*}
\| W \|_2 = \frac{1}{\mu^*} \| \mathcal{P}_{U^T \cap V^T} (Z) \|_2 \leq  \frac{1}{\mu^*}  \| Z \|_2 \leq 1
\end{align*}
with probability at least $1- 2 \exp\left( -c(m+n) \right)$.
Thus with the choice of $W$, Equation \eqref{kkt2.eq} becomes
\begin{align}
	 -\Delta^* + \Theta^* = \lambda 1_{R_m} 1_{C_n}^T + \mathcal{P}_{U \cup V}(Z) - \mu^* \frac{1}{\sqrt{k_m k_n}} 1_{R_m} 1_{C_n}^T - \nu^* 1_m 1_n^T.
\end{align}
Hence, we need to have
\begin{align}
	\label{ld1.eq}
\lambda - \mu^* \frac{1}{\sqrt{k_m k_n}} - \nu^* - \max_{ij} |[\mathcal{P}_{U \cup V}(Z)]_{ij}| > 0, ~~ i \in R_m ~\cap~ j\in C_n\\
    \label{ld2.eq}
	\max_{ij} |[\mathcal{P}_{U \cup V}(Z)]_{ij}| - \nu^* < 0, ~~  i \in R_m^c ~\cup~ j\in C_n^c.
\end{align}

Now let us write out the explicit form of the projection $\mathcal{P}_{U \cup V}(Z)$
\begin{align}
\mathcal{P}_{U \cup V}(Z) = \frac{1}{k_m} 1_{R_m} 1_{R_m}^T Z + \frac{1}{k_n} Z 1_{R_n} 1_{R_n}^T - \frac{1}{k_m k_n} 1_{R_m} 1_{R_m}^T Z 1_{R_n} 1_{R_n}^T.
\end{align}
Let us see the concentration property of $[\mathcal{P}_{U \cup V}(Z)]_{ij}$:
\begin{align}
	[\mathcal{P}_{U \cup V}(Z)]_{ij} &= \frac{1}{k_m} \left( \sum_{k \in R_m} Z_{k j} \right) 1_{i \in R_m} + \frac{1}{k_n} \left( \sum_{l \in C_n} Z_{i l} \right) 1_{j \in C_n} -  \frac{1}{k_m k_n} \left( \sum_{k \in R_m, l \in C_n} Z_{i j} \right) 1_{i \in R_m, j\in C_n} \\
	|[\mathcal{P}_{U \cup V}(Z)]_{ij}| &\leq \left|\frac{1}{k_m} \sum_{k \in R_m} Z_{k j}  \right| + \left|\frac{1}{k_n} \sum_{l \in C_n} Z_{i l} \right| + \left|\frac{1}{k_m k_n} \sum_{k \in R_m, l \in C_n} Z_{i j} \right|.
\end{align}
For all the $1\leq j \leq n$
\begin{align}
	\max_{1 \leq j \leq n} \left|\frac{1}{k_m} \sum_{k \in R_m} Z_{k j}  \right| \leq \sqrt{2(1+1/c)} \cdot \sigma \sqrt{\frac{\log n}{k_m}}
\end{align}
with probability at least $1 - 2 n^{-c}$.
For all the $1\leq i \leq m$
\begin{align}
	\max_{1 \leq i \leq m} \left|\frac{1}{k_n} \sum_{l \in C_n} Z_{i l} \right| \leq \sqrt{2(1+1/c)} \cdot \sigma \sqrt{\frac{\log m}{k_n}}
\end{align}
with probability at least $1 - 2 m^{-c}$. For all $i,j$,
\begin{align}
	\max_{i,j}\left|\frac{1}{k_m k_n} \sum_{k \in R_m, l \in C_n} Z_{i j} \right| \leq \sqrt{2(1+1/c)} \cdot \sigma \sqrt{\frac{\log (mn)}{k_m k_n}}
\end{align}
with probability at least $1 - 2 (mn)^{-c}$. Now, pick the dual certificate variables in the following way
\begin{align}
	\mu^* & = C \cdot \sigma( \sqrt{m}+\sqrt{n} ) \\
	\nu^* & = C \cdot \sigma \left( \sqrt{\frac{\log m}{k_n}} +  \sqrt{\frac{\log n}{k_m}} \right) \\
	\Theta^* & > 0, ~~ i \in R_m ~\cap~ j\in C_n \\
	\Delta^* & < 0, ~~ i \in R_m^c ~\cup~ j\in C_n^c
\end{align}
where the last two equation follows from \eqref{ld1.eq} and \eqref{ld2.eq}.
We conclude that the relaxation algorithm succeeds with probability at least $$1 - 2 m^{-c} - 2 n^{-c} - 2 (mn)^{-c} - 2 \exp(-c(m+n))$$
if
$$
\lambda \geq C \cdot 
\sigma \left(\sqrt{\frac{m+n}{k_m k_n}} +  \sqrt{\frac{\log m}{k_n}}  + \sqrt{\frac{\log n}{k_m}}  \right).
$$
We have achieved the the same boundary as the spectral method upper bound.
\end{proof}

\section{Algorithmic Reduction for Detection}
\label{sec:complow-det}
\begin{theorem}[Computational Lower Bounds for Detection]
	\label{CLD.Thm}
	Consider the submatrix model \eqref{submat.Mat.Frm} with parameter tuple $(m = n, k_m \asymp k_n \asymp n^{\alpha}, \lambda/\sigma = n^{-\beta})$, where $\frac{1}{2}<\alpha<1,~ \beta>0$. Under the hardness assumption $\sf HC_{d}$, if 
	\begin{align*}
		\frac{\lambda}{\sigma} \precsim \frac{m+n}{k_m k_n} ~~ \Rightarrow ~~ \beta > 2\alpha - 1,
	\end{align*}
	it is not possible to detect the true support of the submatrix with probability going to $1$ for any polynomial algorithm.
\end{theorem}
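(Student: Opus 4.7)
My plan is to use the same randomized polynomial-time reduction constructed in the proof of Theorem~\ref{CLL.Thm}, but invoke the stronger hypothesis $\sf HC_{d}$ in place of $\sf HC_{l}$ and omit the cleaning-up step. Specifically, given a hidden clique instance $G$ on $\mathcal{G}^e(2n, 2\kappa)$ with $\kappa = n^\gamma$, I would output the bootstrapped matrix
\[
\mathcal{T}(G)_{ij} \;=\; \frac{1}{l}\sum_{0 \le s,t < l}(G_{UR})_{\psi^{(s)}(i)\,\phi^{(t)}(j)},\qquad 1 \leq i,j \leq n,
\]
with i.i.d.\ bootstrap index vectors $\psi^{(s)}, \phi^{(t)} \in [n]^n$ and the universality appeal to \cite{vu2008random} exactly as in Theorem~\ref{CLL.Thm}. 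Under $\mathcal{P}_{\sf ER}$, the output is (up to asymptotically vanishing bias and correlation) an instance of the submatrix null $H_0 : M = \mathbf{0}$ with sub-Gaussian noise of parameter $1 - o(1)$; under $\mathcal{P}_{\sf HC}$, the analysis in Theorem~\ref{CLL.Thm} already shows that $\mathcal{T}(G)$ is an instance of $\mathcal{M}(n, k, \lambda/\sigma)$ with $k_m \asymp k_n \asymp \kappa l$ and $\lambda/\sigma \asymp 1/l$, hence an instance of $H_\alpha$.

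\textbf{Parameter matching.} For any target exponent pair in the claimed hardness region $\{\alpha \in (1/2, 1),\, \beta > 2\alpha - 1\}$, I would choose $\gamma := \alpha - \beta$ and $l := n^\beta$. Since $\beta > 2\alpha - 1 > \alpha - 1/2$ when $\alpha > 1/2$, this places $\gamma = \alpha - \beta \in (0, 1/2)$, so the hidden clique problem is in the $\sf HC_{d}$-hard regime; simultaneously the bootstrapped submatrix has size exponent $\gamma + \beta = \alpha$ and signal-to-noise exponent $\beta$, as required. Hence, if some $\mathcal{A} \in \sf PolyAlg$ could detect an instance of $\mathcal{M}(n, n^\alpha, n^{-\beta})$ with success probability tending to one, the composition $\mathcal{A} \circ \mathcal{T}$ would be a randomized polynomial-time test distinguishing $\mathcal{P}_{\sf ER}$ from $\mathcal{P}_{\sf HC}$ at clique exponent $\gamma < 1/2$, contradicting $\sf HC_{d}$. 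An $\epsilon$-slack version of the same argument (taking $\beta = 2\alpha - 1 + \epsilon$ and propagating $\epsilon$ through to $\gamma = 1/2 - \epsilon/2$) gives the desired quantitative lower bound.

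\textbf{Main obstacle.} The routine technical steps---sub-Gaussianity of each $\mathcal{T}(G)_{ij}$, concentration of $|R_l|$ and $|C_l|$ at $\kappa l (1 \pm o(1))$ via Bernstein, and marginal independence of entries within a bootstrapped row via the random bi-regular graph universality of \cite{vu2008random}---carry over verbatim from the proof of Theorem~\ref{CLL.Thm} and I do not expect to need new ideas there. The main conceptual contrast with Theorem~\ref{CLL.Thm}, and the only substantive point that this proof must make beyond those already established, is the \emph{absence} of the cleaning-up step (Lemma~\ref{lma:neig}): detection only requires telling the two distributions apart, not identifying the hidden clique nodes inside the bootstrap candidate set $R_l$. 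This single omission is precisely why the computational detection threshold is the weaker $\lambda/\sigma \precsim (m+n)/(k_m k_n)$ rather than $\sqrt{(m+n)/(k_m k_n)}$, and why the stronger hypothesis $\sf HC_{d}$ must be invoked to compensate for the loss of the support-recovery mechanism on the hidden-clique side.
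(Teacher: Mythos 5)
There is a genuine gap here, and it is in the very first step of your plan: reusing the bootstrap reduction from Theorem~\ref{CLL.Thm} for the detection problem. A reduction argument for detection needs the law of $\mathcal{T}(G)$ to lie in (or be total-variation close to) the submatrix model, under \emph{both} hypotheses, because the hypothesized algorithm $\mathcal{A}$ is only guaranteed on distributions of the form \eqref{submat.Mat.Frm} with i.i.d.\ noise. The bootstrapped matrix does not satisfy this: sampling with replacement reuses each entry of $G_{UR}$ roughly $l^2$ times, so although each $\mathcal{T}(G)_{ij}$ is marginally sub-Gaussian, the entries are positively correlated, and aggregate statistics see this strongly (for instance the total sum of $\mathcal{T}(G)$ has standard deviation of order $ln$, not $n$). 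A quick sanity check shows your argument would prove too much: your parameter matching only requires $\gamma=\alpha-\beta<1/2$, i.e.\ $\beta>\alpha-1/2$, so the same reasoning would give computational hardness of detection throughout $\alpha-1/2<\beta<2\alpha-1$; but in that range genuine submatrix detection is solvable in linear time by the global averaging test (signal $\lambda k_mk_n\asymp n^{2\alpha-\beta}$ against noise $\sigma\sqrt{mn}\asymp n$). The resolution is not a contradiction with $\sf HC_d$ but the failure of the composition: the sum test applied to $\mathcal{T}(G)$ does \emph{not} distinguish $\mathcal{P}_{\sf ER}$ from $\mathcal{P}_{\sf HC}$, precisely because the bootstrap correlations inflate its null variance by a factor $l^2$. (A secondary issue: you also need $\gamma=\alpha-\beta>0$, so your construction as stated does not even cover $\beta\ge\alpha$.)

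The paper's proof of Theorem~\ref{CLD.Thm} uses a different reduction that is built to avoid exactly this: it starts from a hidden clique graph on $N=n^{1+\beta}$ vertices with $\kappa=n^{\alpha}$, partitions $G_{UR}$ into $n\times n$ \emph{disjoint} blocks of size $n^{\beta}\times n^{\beta}$, and sets $M_{st}=n^{-\beta}\sum_{i\in I_s,j\in I_t}(G_{UR})_{ij}$. Disjointness means each Rademacher entry is used exactly once, so the noise in $M$ stays independent with parameter $1-o(1)$, the signal blocks have mean at least $n^{-\beta}$, and there are $\asymp n^{\alpha}$ of them. The price is the inflation of the graph size to $N=n^{1+\beta}$, and this is precisely what produces the detection threshold: $\sf HC_d$ applies iff $\kappa\asymp N^{\alpha/(1+\beta)}\precsim N^{1/2}$, i.e.\ $\beta>2\alpha-1$, rather than the localization-type condition $\beta>\alpha-1/2$. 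So the difference between the two thresholds is not, as you suggest, merely the omission of the cleaning-up step of Lemma~\ref{lma:neig}; it is the different reduction (partition of a larger graph versus bootstrap of a same-sized graph) forced by the requirement that the reduced instance match the i.i.d.\ null in joint law.
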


\begin{proof}[Proof of Theorem \ref{CLD.Thm}]
We would like to build a randomized polynomial mapping from the hidden clique graph $\mathcal{G}(N, \kappa(N))$ to a matrix $M(m=n,k_m \asymp k_n \asymp k,\lambda/\sigma)$ for the submatrix model. Denote this transformation as $$\mathcal{T}: \mathcal{G}(N, \kappa(N)) \rightarrow M(n,k = n^{\alpha},\lambda/\sigma = n^{-\beta}).$$ 
%Let us see how to construct this polynomial time transformation $\mathcal{T}$.
 
 There are several stages of the construction. First, we define a graph that is stochastically equivalent to the hidden clique graph $\mathcal{G}$, but is easier for the analysis. Let us call it $\mathcal{G}^{e}$.  $\mathcal{G}^e$ has the property: each node independently has the probability $\kappa(N)/N$ to be a clique node. By Bernstein's inequality, with probability at least $1 - 2N^{-1}$, the number of cliques $\kappa^{e}$ in $\mathcal{G}^e$
 \begin{align}
 \kappa \left(1 - \sqrt{\frac{4 \log N}{\kappa}} \right) \leq \kappa^e \leq  \kappa \left(1 + \sqrt{\frac{4 \log N}{\kappa}} \right) \Rightarrow \kappa^e \asymp \kappa
 \end{align}
 as long as $\kappa \succsim \log N$. 
 
 Consider a double sized hidden clique graph $\mc{G}^e(2N, 2\kappa(N))$ with $N = n^{1+\beta}$, and $\kappa(N) = k = n^{\alpha}$, $\frac{1}{2}<\alpha<1$. Denote the clique nodes set as $C_{N,\kappa}$. Connect the hidden clique graph to form a symmetric matrix $G \in \{-1,1\}^{2N \times 2N}$, where $G_{ij} = 1$ if $i,j \in C_{N,\kappa}$, otherwise with equal probability to be either $-1$ or $1$. Take out the upper-right submatrix of $G$, $G_{UR}$ where $U$ is the index set $1\leq i \leq N$ and $R$ is the index set $N+1 \leq j \leq 2N$.

 Partition the rows of $G_{UR} \in \mathbb{R}^{n^{1+\beta} \times n^{1+\beta}}$, to form $n$ blocks. The $s$'s block, $1 \leq s \leq n$, corresponds to the row index set $I_s  = \{i:(s-1)n^{\beta}+1 \leq i \leq  s n^{\beta}\}$. Construct the $M \in \mathbb{R}^{n \times n}$ matrix in the following way
 \begin{align}
	\mathcal{T}:~~ M_{st} = \frac{1}{n^{\beta}} \sum_{i \in I_s, j\in I_t} (G_{UR})_{ij}, ~~ 1\leq s,t \leq n.
 \end{align}
 There are two cases, if $I_{s} \cap C_{N,\kappa} \neq \emptyset$ and $I_{t} \cap C_{N,\kappa} \neq \emptyset$, namely when $s$'s block contains at least $1$ clique node, and so does $t$'s block, then $\mathbb{E} M_{st} \geq n^{-\beta}$. Otherwise $\mathbb{E} M_{st}  = 0$. Note that for the index $s,t$ that has no clique inside, $\mathbb{E} M_{st}$ is sub-Gaussian random variable, due to Hoeffding's inequality
 \begin{align}
	 \mathbb{P}\left(|M_{st} - \mathbb{E}M_{st}| \geq u \right) \leq 2 \exp (- u^2/2).
 \end{align}
 For the $s,t$ with clique nodes inside, we know the maximum number of clique nodes is $\log n$ (due to Bernstein's inequality that $\max_{1\leq s\leq n} |I_{s} \cap C_{N,\kappa}| \leq \frac{k}{n} + \frac{8}{3} \log n$ with probability at least $1- n^{-1}$), which means there are at least $n^{\beta} - \frac{k}{n} - \frac{8}{3} \log n$ many independent Rademacher random variables in each $s,t$ block, thus
 \begin{align}
	 \mathbb{P}\left(|M_{st} - \mathbb{E}M_{st}| \geq u \right) \leq 2 \exp \left(- (1 - C \cdot (k n^{-1-\beta} + n^{-\beta} \log n))u^2/2 \right).
 \end{align}
 Thus we can take sub-Gaussian parameter to be any $\sigma < 1$ because $k n^{-1-\beta}, n^{-\beta} \log n$ are both $o(1)$. Now this constructed $M(n,k)$ matrix satisfies the submatrix model with $\lambda = n^{-\beta}$ and sub-Gaussian parameter $\sigma = 1 - o(1)$. 
 
 Let us see how many elements in $1\leq s\leq n$ are such that $I_{s} \cap C_{N,\kappa} \neq \emptyset$. Namely, we want to estimate how many clique nodes there exist in the transformed submatrix model. We have the two sided bound
 	 $$
 	 k \left(1 - \frac{1}{2n^{1-\alpha}} \right)  \leq \mathbb{E} |\{s: I_{s} \cap C_{N,\kappa} \neq \emptyset, 1\leq s \leq n \} | \leq  k.
 	 $$
 	 which is of the order $k$. Using Bernstein's bound, we have with high probability 
	 $$
	 k\left(1 - \frac{1}{2n^{1-\alpha}} \right) \left(1 - \sqrt{\frac{4\log n}{k}} \right) < |\{s: I_{s} \cap C_{N,\kappa} \neq \emptyset, 1\leq s \leq n \} | < k \left(1 + \sqrt{\frac{4\log n}{k}} \right)
	 $$
	 Thus the submatrix model $M(m=n,k_m \asymp k_n,\lambda/\sigma)$ satisfies $k_m \asymp k_n \asymp k$.
 
 Suppose there exists an polynomial time algorithm $\mathcal{A}_M$ that pushes below the computational boundary quantitatively by a small $\epsilon>0$ amount
 \begin{align}
 n^{-\beta} \asymp \frac{\lambda}{\sigma} \precsim  \frac{m+n}{k_m k_n}  \asymp n^{1-2\alpha} \Rightarrow \beta > 2\alpha-1.
 \end{align}
 where $\beta = 2\alpha-1+\epsilon$.
 Namely, $\mathcal{A}_M$ detects below the boundary $\sigma \frac{m+n}{k_m k_n}$, then it naturally introduced a polynomial time detection algorithm for hidden clique problem $\mc{G}(N = n^{1+\beta}, \kappa = n^{\alpha})$, which violates the $\sf HC_{d}$ because
 $$
\kappa(N) \asymp N^{\frac{\alpha}{1+\beta}} \asymp N^{\frac{\alpha}{2\alpha+\epsilon}}\precsim N^{\frac{1}{2}} .
 $$
\end{proof}

\end{appendix}
%%%%%%%%%%%%%%%%%%%%%%%%%%%%%%%%%%%%%%%%%%%%%%%%%%%%%%%

\end{document}